\begin{document} 
\renewcommand{\thesubsection}{\arabic{subsection}}
\newenvironment{eq}{\begin{equation}}{\end{equation}}
\newenvironment{proof}{{\bf Proof}:}{\vskip 5mm }
\newenvironment{rem}{{\bf Remark}:}{\vskip 5mm }
\newenvironment{remarks}{{\bf Remarks}:\begin{enumerate}}{\end{enumerate}}
\newenvironment{examples}{{\bf Examples}:\begin{enumerate}}{\end{enumerate}}  
\newtheorem{proposition}{Proposition}[subsection]
\newtheorem{lemma}[proposition]{Lemma}
\newtheorem{definition}[proposition]{Definition}
\newtheorem{theorem}[proposition]{Theorem}
\newtheorem{cor}[proposition]{Corollary}
\newtheorem{conjecture}{Conjecture}
\newtheorem{pretheorem}[proposition]{Pretheorem}
\newtheorem{hypothesis}[proposition]{Hypothesis}
\newtheorem{example}[proposition]{Example}
\newtheorem{remark}[proposition]{Remark}
\newtheorem{ex}[proposition]{Exercise}
\newtheorem{cond}[proposition]{Conditions}
\newtheorem{cons}[proposition]{Construction}
\newcommand{\llabel}[1]{\label{#1}}
\newcommand{\comment}[1]{}
\newcommand{\sr}{\rightarrow}
\newcommand{\lr}{\longrightarrow}
\newcommand{\xr}{\xrightarrow}
\newcommand{\dw}{\downarrow}
\newcommand{\bdl}{\bar{\Delta}}
\newcommand{\zz}{{\bf Z\rm}}
\newcommand{\zq}{{\bf Z}_{qfh}}
\newcommand{\nn}{{\bf N\rm}}
\newcommand{\qq}{{\bf Q\rm}}
\newcommand{\nq}{{\bf N}_{qfh}}
\newcommand{\oo}{\otimes}
\newcommand{\uu}{\underline}
\newcommand{\ih}{\uu{Hom}}
\newcommand{\af}{{\bf A}^1}
\newcommand{\wt}{\widetilde}
\newcommand{\gm}{{\bf G}_m}
\newcommand{\dsr}{\stackrel{\sr}{\scriptstyle\sr}}
\newcommand{\PP}{$P_{\infty}$}
\newcommand{\tp}{\tilde{D}}
\newcommand{\HH}{$H_{\infty}$}
\newcommand{\ii}{\stackrel{\scriptstyle\sim}{\sr}}
\newcommand{\BB}{_{\bullet}}
\newcommand{\D}{\Delta}
\newcommand{\colim}{{\rm co}\hspace{-1mm}\lim}
\newcommand{\cf}{{\it cf} }
\newcommand{\msf}{\mathsf }
\newcommand{\mcal}{\mathcal }
\newcommand{\ep}{\epsilon}
\newcommand{\tl}{\widetilde}
\newcommand{\ub}{\mbox{\rotatebox{90}{$\in$}}}
\newcommand{\piece}{\vskip 3mm\noindent\refstepcounter{proposition}{\bf
\theproposition}\hspace{2mm}}
\newcommand{\subpiece}{\vskip 3mm\noindent\refstepcounter{equation}{\bf\theequation}\hspace{2mm}}{\vskip
3mm}
\newcommand{\maintwo}{\cite[Theorem 3.74]{Red2sub}}
\newcommand{\anotherth}{\cite[Theorem 4.24]{Red2sub}}
\newcommand{\refsltensor}{\cite[Lemma 5.15]{oversub}}
\newcommand{\refdualten}{\cite[Lemma 5.17]{oversub}}
\newcommand{\refunext}{\cite[Lemma 5.18]{oversub}}
\newcommand{\refdt}{\cite[Prop. 5.20]{oversub}}
\newcommand{\refdonotneed}{\cite[Lemma 6.9]{oversub}}
\newcommand{\refrl}{\cite[Lemma 6.11]{oversub}}
\newcommand{\refintres}{\cite[Lemma 6.12]{oversub}}
\newcommand{\refmotdual}{\cite[Lemma 6.14]{oversub}}
\newcommand{\refspcase}{\cite[Lemma 6.15]{oversub}}
\newcommand{\refwheneq}{\cite[Lemma 6.23]{oversub}}
\newcommand{\refappmain}{\cite[Th. 8.3]{oversub}}

\numberwithin{equation}{subsection}
%
%
\begin{center}
{\Large\bf On motivic cohomology with $\zz/l$-coefficients\footnote{For the referee: The theorem numbers in references to \cite{Red2sub} and \cite{oversub} are given relative to the versions which are enclosed to this submission.}}\\
\vskip 4mm
{\large Vladimir Voevodsky}\\
{\em December 2008}
\end{center}
\vskip 4mm
\begin{abstract}
In this paper we prove the conjecture of Bloch and Kato which relates Milnor's K-theory of a field with its Galois cohomology as well as the related comparisons results for motivic cohomology with finite coefficients in the Nisnevich and etale topologies. \end{abstract}
\tableofcontents
\subsection{Introduction}
\llabel{s0}
In this paper we prove the Bloch-Kato conjecture relating the Milnor
K-theory and etale cohomology. It is a continuation of \cite{MCpub}
where the particular case of $\zz/2$-coefficients (``Milnor's
conjecture'') was established and we refer to the introduction to
\cite{MCpub} for general discussion about the Bloch-Kato conjecture. 

The goal of Sections \ref{s1}, \ref{s2} is to prove Theorem
\ref{maincomp} which relates two types of cohomological operations in
motivic cohomology. One of the operations appearing in the theorem is
defined in terms of symmetric power functors in the categories of
relative Tate motives and another one in terms of the motivic reduced
power operations introduced in \cite{Redpub}. Our proof of this theorem is inspired by
\cite{kraines} and uses a uniqueness argument based on the computations of \cite{Red2sub}. This is the only place where the results of \cite{Red2sub} (and therefore of \cite{SRFsub}) are used and the only place where the results of \cite{oversub} are used in an essential way.

In Section \ref{s3} we consider motives over a special class of
simplicial schemes which are called ``embedded simplicial schemes''
(see \cite{oversub}). Up to an equivalence, embedded simplicial schemes
correspond to subsheaves of the constant one point sheaf on $Sm/k$
i.e. with classes of smooth varieties such that
\begin{enumerate} 
\item if $X$ is in the class and $Hom(Y,X)\ne \emptyset$ then $Y$ is
in the class, and
\item if $U\sr X$ is a Nisnevich covering and $U$ is in the class then
$X$ is in the class.
\end{enumerate}
In particular for a symbol $\uu{a}=(a_1,\dots,a_n)$ we have an
embedded simplicial scheme ${\cal X}_{\uu{a}}$ associated with the
class of all splitting varieties for $\uu{a}$ and the motivic cohomology
of ${\cal X}_{\uu{a}}$ plays a key role in our proof of the Bloch-Kato
conjecture.

The main goal of Section \ref{s3} is to prove a technical result - Theorem
\ref{degree}, which is used in the next section to establish the
purity of the generalized Rost motives. We call this result ``a
motivic degree theorem'' because it is analogous to the simplest
degree formula for varieties which asserts that a morphism from a
$\nu_n$-variety to a variety without zero cycles of degree
prime to $l$ has degree prime to $l$. The main difference between the
standard degree formula and our result is that the target of the
morphism in our case is a motive rather than a variety. As a
consequence of this higer generality we also require stronger
conditions on the target than simply the absence of zero cycles of
degree prime to $l$.

In Section \ref{s4} we introduce the construction which represents
the key difference between the case of $\zz/2$-coefficients and
$\zz/l$-coefficients for $l>2$. In the $\zz/2$-coefficients case the
Pfister quadrics provide canonical $\nu_{n-1}$-splitting varieties for
symbols of length $n$. The explicit nature of these varieties made it
possible for Markus Rost to invent a simple geometric argument
which showed that the motive of a Pfister quadric splits as a direct
sum of an ``essential part'' (which we called the Rost motive in
\cite{MCpub}) and a ``non essential part'' which can be ignored as far
as our goals are concerned. The fact that the Rost motive is a direct
summand of the motive of a $\nu_{n-1}$-variety and at the same
time has a description in terms of Tate motives over the embedded
simplicial scheme ${\cal X}_{\uu{a}}$ defined by the symbol puts
strong restrictions on the motivic cohomology of ${\cal
X}_{\uu{a}}$. These restrictions allowed us to
reformulate the vanishing result needed for the proof of the Milnor
conjecture in terms of a motivic homology group of the Pfister
quadric which can be analyzed geometrically. 

A direct extention of these arguments to the $l>2$ case fails for two
main reasons. On the one hand we do not have nice geometric models for
$\nu_{n-1}$-splitting varieties for symbols of length $n$. On the
other hand the argument which for $l=2$ transfers the vanishing problem to a motivic homolopgy group having an explicit 
geometric description fails to produce the same result for $l>2$
ending in a group which is not any easier to understand than the
original one. 

We show in Section \ref{s4} that any embedded simplicial scheme
$\cal X$ which has a non-trivial motivic cohomology class of certain
bidegree and such that the corresponding class of varieties contains a
$\nu_n$-variety defines a {\em generalized Rost motive}. This motive
is constructed from the Tate motives over $\cal X$ and we use the
motivic degree theorem of the previous section to prove that it is a
direct summand of the motive of any $\nu_{n}$-variety over
$\cal X$. The key ingredient of the proof is the relation between the
$(l-1)$-st symmetric powers and Milnor operations $Q_i$ provided by
Theorem \ref{maincomp} and Lemma \ref{scomp}. 

Generalized Rost motives unify two previously known families of
motives - the Rost motives for $l=2$ discussed above and the motives
of cyclic field extensions of prime degree. The generalized Rost
motives correspond to motivic cohomology classes which have
$\nu_{n}$-splitting varieties in the same way as the motives of the
cyclic field extensions correspond to the motivic cohomology classes
in $H^{1,1}(k,\zz/l)$. 

In Section \ref{s5} we give a proof of the Bloch-Kato conjecture
based on the results of the previous sections, \cite{MCpub} and Theorem
\ref{Rost2}. 

The approach to the Bloch-Kato conjecture used in the present paper goes back to the fall of 1996. The proof of Theorem \ref{maincomp} in the first version of this paper (see \cite{motcoh}) was based on a lemma (\cite[Lemma 2.2]{motcoh}) the validity of which is, at the moment, under serious doubt. In  \cite{patching}, C. Weibel suggested another approach to the proof of \ref{maincomp}. In the present version of the paper we use a modified version of Weibel's approach in which \cite[Lemma 2.2]{motcoh} is replaced by Lemma \ref{newlemma}.

I would like to specially thank several people who helped me to
understand things used in this paper. Pierre Deligne for explaining to
me how to define sheaves over simplicial schemes and for help with the
computation of $H^{*}(B{\bf G}_a, {\bf G}_a)$. Peter May for
general remarks on tensor triangulated categories. Fabien Morel for
helping me to figure out the relation (\ref{srel}). And very specially Chuck Weibel for continuing support and encouragement.

\subsection{Computations with cohomological operations}
\llabel{s1}
\llabel{comp2}
For the purpose of this section a pointed smooth simplicial scheme is
a pointed simplicial scheme such that its terms are disjoint unions of
smooth schemes of finite type over $k$ pointed by a disjoint
point. For a pointed smooth simplicial scheme $\cal X$ the simplicial
suspension $S^1_s\wedge {\cal X}$ is again a pointed smooth simplicial
scheme. For a motivic cohomology class
$$\alpha\in H^{p,q}({\cal X},R)$$
of a pointed smooth simplicial scheme $\cal X$ we let 
$$\sigma_s\alpha\in H^{p,q}(S^1_s\wedge {\cal X},R)$$
denote the simplicial suspension of $\alpha$. The goal of this section
is to prove the following uniqueness result.
\begin{theorem}
\llabel{maincomp2} Let $k$ be a field of characteristic zero. Let
$\phi_i$, $i=1,2$ be two cohomological operations on the motivic
cohomology of pointed smooth simplicial schemes of the form
$$\tilde{H}^{2n+1,n}(-,\zz/l)\sr \tilde{H}^{2nl+2,nl}(-,\zz/l)$$
such that:
\begin{enumerate}
\item for $b\in\zz/l$ one has $\phi_i(b\alpha)=b\phi_i(\alpha)$
\item for any $\alpha\in H^{2n,n}({\cal X},\zz/l)$ one has
$\phi(\sigma_s\alpha)=0$
\end{enumerate}
Then there exists $c\in \zz/l$ such that $\phi_1=c\phi_2$.
\end{theorem}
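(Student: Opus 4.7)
My strategy is the classical representability approach. Any operation $\phi$ satisfying (1) is determined by its value on the universal class $\iota \in \tilde{H}^{2n+1,n}(K, \mathbb{Z}/l)$, where $K$ is a pointed smooth simplicial scheme representing $\tilde{H}^{2n+1,n}(-, \mathbb{Z}/l)$; the correspondence $\phi \mapsto \phi(\iota)$ identifies such operations with a $\mathbb{Z}/l$-linear subspace of $\tilde{H}^{2nl+2,nl}(K, \mathbb{Z}/l)$. Condition (2), applied to the universal class in $\tilde{H}^{2n,n}(-,\mathbb{Z}/l)$, translates via the simplicial suspension isomorphism into the statement that $\phi(\iota)$ has vanishing cosuspension into $\tilde{H}^{2nl+1,nl}(K', \mathbb{Z}/l)$, where $K'$ is the analogous representing object for $\tilde{H}^{2n,n}(-, \mathbb{Z}/l)$.

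It then suffices to prove that the kernel of this cosuspension map, in bidegree $(2nl+2, nl)$, is one-dimensional over $\mathbb{Z}/l$. The bidegree shift from input to output is exactly $(2n(l-1)+1, n(l-1))$, which coincides with the bidegree of both $\beta P^n$ and $P^n \beta$, supplying explicit candidate generators. Using the computations of motivic cohomology of motivic Eilenberg--MacLane objects from \cite{Red2sub}, together with the known structure of the motivic Steenrod algebra, one enumerates the additive operations of this bidegree shift in terms of admissible monomials in the Bockstein and the reduced powers $P^i$, evaluates each on $\iota$, and determines which $\mathbb{Z}/l$-linear combinations cosuspend to zero. The claim is that only one class (up to nonzero $\mathbb{Z}/l$-scalar) in the resulting subspace has this property, which immediately yields $\phi_1 = c \phi_2$.

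The main obstacle is the explicit identification of this kernel as one-dimensional. The conceptual outline is that of \cite{kraines} cited in the introduction, but in the motivic setting one must simultaneously track the topological degree and the motivic weight and must know explicitly which admissible monomials of the given bidegree act nontrivially on the universal class and survive the desuspension. Lemma \ref{newlemma}, introduced in this version of the paper to replace the problematic \cite[Lemma 2.2]{motcoh}, is presumably the algebraic identity which forces the enumeration to collapse to a single generator; I expect to invoke it at the step of showing that every admissible monomial of bidegree shift $(2n(l-1)+1, n(l-1))$, other than a fixed multiple of the generator, either has nonvanishing cosuspension or is already proportional to it.
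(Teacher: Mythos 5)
Your initial reduction is correct and matches the paper: by representability the statement reduces to showing that the kernel of the restriction map
$$\tilde{H}^{2nl+2,nl,1}(K_{2n+1},\zz/l)\sr \tilde{H}^{2nl+2,nl}(\Sigma^1_sK_{2n},\zz/l)$$
is generated by one element (the paper's Proposition \ref{ker2}, with the superscript $1$ denoting the scalar-weight-one part, which is where condition (1) places $\phi(\alpha_{2n+1})$). However, your proposed method for computing this kernel contains a genuine gap. You propose to ``enumerate the additive operations of this bidegree shift in terms of admissible monomials in the Bockstein and the reduced powers,'' and decide which $\zz/l$-linear combinations desuspend to zero. This fails for two related reasons. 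First, condition (1) is scalar equivariance, not additivity: the operations $\phi_i$ in the theorem are not assumed to be stable, and the operation $\phi_{l-1}$ of Section~\ref{s2} that this theorem is designed to pin down is not additive. Second, and decisively, even if one only cared about the element $\phi(\alpha_{2n+1})\in\tilde{H}^{2nl+2,nl,1}(K_{2n+1},\zz/l)$, that group is not spanned by admissible Steenrod monomials applied to the fundamental class: the motivic cohomology of the Eilenberg--MacLane object $K_{2n+1}$ is, by \maintwo\ and \anotherth, a sum of Tate motives indexed by symmetric powers, and contains many decomposable (product-type and Massey-product-type) classes that are not in the Steenrod-algebra submodule generated by $\alpha_{2n+1}$. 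Indeed the relevant kernel in the paper is identified with a filtration-two graded piece $E_\infty^{2,2nl,nl,1}$ of the bar spectral sequence for $K_{2n+1}=B_{\bullet}K_{2n}$, i.e.\ with classes originating from $K_{2n}^{\wedge 2}$ rather than from stable operations on the fundamental class.

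What the paper actually does, and what is missing from your sketch, is to realize $K_{2n+1}$ as the simplicial classifying object $B_{\bullet}K_{2n}$, set up the skeletal-filtration spectral sequence with $E_1^{p,q}=\tilde{H}^{q,w}(K_{2n}^{\wedge p},\zz/l)$, and then use the Künneth isomorphism (Lemma \ref{later2}), the scalar-weight decomposition, and the weight lower bounds of Lemma \ref{newlemma} and Lemma \ref{newlemma2} to show that the kernel of the cosuspension is exactly $E_\infty^{2,2nl,nl,1}\subset E_2^{2,2nl,nl,1}$ and that the latter is cyclic. That last step is carried out by mapping the normalized cochain complex of $B_{\bullet}{\bf G}_a$ into the $E_1$-row via the class $\gamma=\alpha_{2n}$ satisfying (\ref{gammacond}), and then invoking Lazard's computation of $H^*(B{\bf G}_a,{\bf G}_a)$. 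Your guess about the role of Lemma \ref{newlemma} is partly right in spirit, that it provides the weight constraints which collapse the possibilities, but in the paper it is applied to bound the $E_1$ terms of the bar spectral sequence (Lemmas \ref{actneed1}--\ref{actneed3}), not to enumerate Steenrod monomials. Without the bar spectral sequence and the reduction to $B{\bf G}_a$, your enumeration argument does not see the relevant classes and cannot close the gap.
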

Observe first that since motivic cohomology respect local equivalences
and any pointed simplicial sheaf is locally equivalent to a pointed
smooth simplicial scheme, operations $\phi_i$ extend canonically to
operations on the motivic cohomology of pointed simplicial sheaves.

Let $K_{m}$, $m=2n,2n+1$ be a pointed simplicial sheaf which
represents on the pointed motivic homotopy category the functor
$\tilde{H}^{m,n}(-,\zz/l)$ and $\alpha_m$ be the canonical class in
$\tilde{H}^{m,n}(K_{m},\zz/l)$.

Since both operations $\phi_i$ are natural for morphisms of pointed
smooth simplicial schemes and any morphism in the motivic homotopy
category can be represented by a hat of morphisms of pointed smooth
simplicial schemes it is sufficient to show that 
$$\phi_1(\alpha_{2n+1})=c\phi_2(\alpha_{2n+1}).$$
for an element $c\in\zz/l$.
\begin{lemma}
\llabel{later3}
For all $i>0$ one has $\alpha_{2n}^i\ne 0$.
\end{lemma}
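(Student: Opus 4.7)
The plan is to reduce this to a transparent cohomology computation via representability. Since $K_{2n}$ represents the functor $\tilde{H}^{2n,n}(-,\zz/l)$ on the pointed motivic homotopy category, every class $\beta\in\tilde{H}^{2n,n}(\mathcal{X},\zz/l)$ on a pointed simplicial sheaf $\mathcal{X}$ is the pullback $f^{*}\alpha_{2n}$ along some classifying morphism $f:\mathcal{X}\to K_{2n}$. Naturality of the cup product then forces $f^{*}(\alpha_{2n}^{i})=\beta^{i}$, so to prove $\alpha_{2n}^{i}\ne 0$ it is enough to exhibit one pair $(\mathcal{X},\beta)$ for which $\beta^{i}\ne 0$.

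For this I would take $\mathcal{X}=(\mathbb{P}^{N})_{+}$ for some $N\ge ni$, obtained by adjoining a disjoint basepoint to $\mathbb{P}^{N}$; this is a pointed smooth simplicial scheme in the sense of the section and satisfies $\tilde{H}^{*,*}(\mathcal{X},\zz/l)=H^{*,*}(\mathbb{P}^{N},\zz/l)$. Let $\beta=u^{n}$, where $u\in H^{2,1}(\mathbb{P}^{N},\zz/l)$ is the standard generator. By the well-known computation $H^{*,*}(\mathbb{P}^{N},\zz/l)=\zz/l[u]/(u^{N+1})$, the power $\beta^{i}=u^{ni}$ is nonzero in bidegree $(2ni,ni)$ as long as $ni\le N$. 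Since the lemma is a statement about a single $i$ at a time, we may take $N$ as large as we like, so $\beta^{i}\ne 0$ and hence $\alpha_{2n}^{i}\ne 0$.

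I do not anticipate a serious obstacle: the argument is purely formal once one has representability of motivic cohomology in the pointed motivic homotopy category together with the standard calculation of $H^{*,*}(\mathbb{P}^{N},\zz/l)$. The only bookkeeping items are (i) verifying that a class on $(\mathbb{P}^{N})_{+}$ is indeed realized by a morphism to $K_{2n}$ in the pointed setting, which follows from the adjunction used to define $K_{2n}$, and (ii) compatibility of the classifying morphism with cup products, which is immediate from naturality.
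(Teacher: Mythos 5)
Your argument is exactly the paper's: use representability of $\tilde{H}^{2n,n}(-,\zz/l)$ by $K_{2n}$ together with naturality of cup products to reduce to exhibiting a class whose $i$-th power is nonzero, and then take the generator $u^n$ of $H^{2n,n}(\mathbb{P}^N,\zz/l)=\zz/l[u]/(u^{N+1})$ for $N$ large. Your writeup just makes the bookkeeping (basepoint, the fact that the generator is $u^n$) explicit.
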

\begin{proof}
Since $K_{2n}$ represents the functor $\tilde{H}^{2n,n}(-,\zz/l)$ the
condition $\alpha_{2n}^i=0$ would imply that for any $X$ and any
$\alpha\in H^{2n,n}(X,\zz/l)$ one has $\alpha^i=0$. Taking $X$ to be
${\bf P}^N$ for $N$ large enough and $\alpha$ to be a generator of
$H^{2n,n}({\bf P}^N,\zz/l)$ we get a contradiction.
\end{proof}
\begin{lemma}
\llabel{later2} Let $k$ be a field of characteristic zero. Then the
Kunnet homomorphism
$$\tilde{H}^{*,*}(K_{2n},\zz/l)\oo_{H^{*,*}}\dots\oo_{H^{*,*}}\tilde{H}^{*,*}(K_{2n},\zz/l)\sr
\tilde{H}^{*,*}(K_{2n}^{\wedge i},\zz/l)$$
is an isomorphism for all $i\ge 0$.
\end{lemma}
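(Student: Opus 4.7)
The plan is to reduce to the structure theorem for the motivic cohomology of the motivic Eilenberg--Mac\,Lane object $K_{2n}$ established in \cite{Red2sub}. In characteristic zero that theorem asserts that $\tilde{H}^{*,*}(K_{2n},\zz/l)$ is a free bigraded module over $H^{*,*}:=H^{*,*}(\mathrm{Spec}\,k,\zz/l)$, with basis given by admissible monomials in the motivic Steenrod operations applied to the fundamental class $\alpha_{2n}$, and that the K\"unneth formula already holds for the product $K_{2n}\times K_{2n}$. Granting this, the K\"unneth isomorphism for $K_{2n}^{\wedge i}$ follows by induction on $i$, the cases $i=0,1$ being trivial.

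For the inductive step, write $L=K_{2n}^{\wedge(i-1)}$ and $K=K_{2n}$, and consider the cofiber sequence of pointed simplicial sheaves
$$L\vee K\lr L\times K\lr L\wedge K=K_{2n}^{\wedge i}.$$
The retractions $L\times K\sr L$ and $L\times K\sr K$ split off the wedge summands in reduced cohomology, giving a direct sum decomposition
$$\tilde{H}^{*,*}(L\times K,\zz/l)=\tilde{H}^{*,*}(L,\zz/l)\oplus \tilde{H}^{*,*}(K,\zz/l)\oplus \tilde{H}^{*,*}(K_{2n}^{\wedge i},\zz/l).$$
Hence the claim for $K_{2n}^{\wedge i}$ is equivalent to the unreduced K\"unneth isomorphism
$$H^{*,*}(L\times K,\zz/l)\cong H^{*,*}(L,\zz/l)\otimes_{H^{*,*}}H^{*,*}(K,\zz/l).$$
This last statement is the formal consequence of freeness that I would use: whenever $\tilde{H}^{*,*}(X,\zz/l)$ is a free $H^{*,*}$-module the K\"unneth map for $X\times Y$ is an isomorphism for any pointed simplicial sheaf $Y$. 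By the inductive hypothesis $\tilde{H}^{*,*}(L,\zz/l)$ is an iterated tensor product of free $H^{*,*}$-modules, hence is itself free, and $\tilde{H}^{*,*}(K,\zz/l)$ is free by \cite{Red2sub}; either side can play the role of the free factor, and the induction closes.

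The main obstacle is loaded entirely into the cited input from \cite{Red2sub}: the freeness of $\tilde{H}^{*,*}(K_{2n},\zz/l)$ as an $H^{*,*}$-module, and the basic K\"unneth formula for $K_{2n}\times K_{2n}$. This is the one point where the characteristic-zero hypothesis really enters, via resolution of singularities in the computation of the motivic cohomology of $K_{2n}$. Once that input is available, the argument above is essentially formal manipulation of cofiber sequences and the splitting of products into wedges.
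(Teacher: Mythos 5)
Your overall strategy — reduce everything to the structural result on $K_{2n}$ from \cite{Red2sub} and then invoke a K\"unneth principle — is the same as the paper's, but there is a genuine gap in the formal lemma you try to run the induction on. You assert that ``whenever $\tilde{H}^{*,*}(X,\zz/l)$ is a free $H^{*,*}$-module the K\"unneth map for $X\times Y$ is an isomorphism for any pointed simplicial sheaf $Y$.'' This is not a consequence of freeness of the cohomology module. Freeness gives you a map in $DM$ from $\tilde M(X)$ to a direct sum of shifted and twisted Tate objects which is an isomorphism on motivic cohomology, but it does not by itself identify $\tilde M(X)$ with that direct sum; a priori there could be phantom summands invisible to $Hom(-,\zz/l(*)[*])$, and without the actual splitting of the motive you cannot tensor with $\tilde M(Y)$ and commute $Hom$ past the direct sum. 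What the cited theorem \maintwo\ actually provides, and what the paper's one-line proof uses, is the stronger statement that the \emph{motive} $\tilde M(K_{2n},\zz/l)$ is a direct sum of Tate motives. That statement is what makes K\"unneth formal: $\tilde M(K_{2n}^{\wedge i})=\tilde M(K_{2n})^{\otimes i}$ is then itself a direct sum of Tate motives, and the K\"unneth map is computed slice by slice.

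Once you phrase the input as ``$K_{2n}$ is a direct sum of Tate motives'' rather than ``$\tilde H^{*,*}(K_{2n})$ is a free module,'' your induction and the cofiber-sequence bookkeeping $L\vee K\sr L\times K\sr L\wedge K$ become unnecessary: the conclusion for $K_{2n}^{\wedge i}$ follows directly for all $i$ in one step, exactly as in the paper. So the fix is small — replace the unjustified ``freeness $\Rightarrow$ K\"unneth'' step with the Tate-splitting statement from \cite{Red2sub} — but as written the inductive step does not go through.
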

\begin{proof}
The Kunnet homomorphism is an isomorphism for all spaces whose motives are direct sums of Tate motives. In particular it is an isomorphism for $K_{2n}$ which is a direct sum of Tate motives by \maintwo.
\end{proof}

Choosing $K_m$ to be a sheaf of $\zz/l$ vector spaces we get an action
of $Aut(\zz/l)=(\zz/l)^*$ by automorphisms on $K_{m}$. This action
defines an action on the motivic cohomology of $K_{m}$ with
$\zz/l$-coefficients which gives a canonical splitting of these
cohomology groups into the direct sum of subspaces of weights
$0,\dots,l-2$. To distinguish the weight in this sense from the
weight as the second index of motivic cohomology we will call the
former one the scalar weight and specify it by a third index such that
$H^{p,q,r}(K_{m},\zz/l)$ is the subgroup of elelemnts of scalar
weight $r$ in $H^{p,q}(K_{m},\zz/l)$. A class $\gamma$ is in this
subgroup if for any $a\in (\zz/l)^*$ the automorphism $f_a$ defined by
$a$ takes $\gamma$ to $a^r\gamma$.

For an element $x$ in $H^{*,*}(K_m,\zz/l)$ we let $s(x)$
(resp. $w(x)$, $d(x)$) denote its scalar weight (resp. its motivic
weight, its dimension) if it is well defined. 

\begin{lemma}
\llabel{newlemma}
Let $0\le s\le l-2$ and let $x\in H^{*,*,s}(K_n,\zz/l)$, $x\ne 0$. Then one has:
\begin{eq}
\llabel{newineq1}
w(x)\ge\left\{
\begin{array}{ll}
sn&\mbox{\rm if $s>0$}\\
(l-1)n&\mbox{\rm if $s=0$}
\end{array}
\right.
\end{eq}
If $n>0$ and the equality holds in (\ref{newineq1}) then there is $c\in \zz/l$ such that
\begin{eq}
\llabel{newineq2}
x=\left\{
\begin{array}{llll}
c\alpha^s_{2n}&{\rm or}&c(\beta\alpha_{2n})\alpha_{2n}^{s-1}&\mbox{\rm if $s>0$}\\
c\alpha^{l-1}_{2n}&{\rm or}&c(\beta\alpha_{2n})\alpha_{2n}^{l-2}&\mbox{\rm if $s=0$}
\end{array}
\right.
\end{eq}
where $\beta$ is the Bockstein homomorphism. 
\end{lemma}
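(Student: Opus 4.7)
The plan is to read off the structure of $H^{*,*}(K_{2n},\zz/l)$ (I interpret ``$K_n$'' in the statement as $K_{2n}$, the representing space of $\tilde H^{2n,n}(-,\zz/l)$) from \maintwo: since the motive of $K_{2n}$ splits as a direct sum of Tate motives, its mod-$l$ motivic cohomology is a free graded-commutative bigraded algebra over $H^{*,*}(k,\zz/l)$ on a family $\{g_I\}$ of algebra generators obtained by applying admissible compositions of the Bockstein $\beta$ and the reduced power operations $P^j$ to the fundamental class $\alpha_{2n}$. All these operations are natural in the coefficient group, so they are equivariant for the scalar $(\zz/l)^*$-action. Since $\alpha_{2n}$ itself has scalar weight $1$, every generator $g_I$ has scalar weight $1$. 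Moreover every generator has motivic weight $\ge n$, with equality precisely for $g_I=\alpha_{2n}$ (bidegree $(2n,n)$) and $g_I=\beta\alpha_{2n}$ (bidegree $(2n+1,n)$), because any non-trivial composition of reduced powers strictly increases the motivic weight.

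With this structure in hand, I would expand $x=\sum_\beta u_\beta m_\beta$, where $m_\beta$ runs through monomials in the $g_I$ (forming the $H^{*,*}(k,\zz/l)$-module basis of the algebra) and $u_\beta\in H^{*,*}(k,\zz/l)$. Since $H^{*,*}(k,\zz/l)$ has scalar weight $0$ and the monomial $m_\beta=\prod g_I^{e_I}$ has scalar weight $\sum e_I\bmod(l-1)$, only monomials with $\sum e_I\equiv s\pmod{l-1}$ contribute. Working in reduced cohomology (so that $m_\beta\ne 1$ when $s=0$), the smallest such value of $\sum e_I$ is $s$ if $s>0$ and $l-1$ if $s=0$. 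Combined with $w(m_\beta)\ge n\sum e_I$ and $w(u_\beta)\ge 0$, additivity of the motivic bigrading gives (\ref{newineq1}).

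For the equality case, every non-zero contribution must satisfy $w(u_\beta)=0$ and $w(m_\beta)$ attaining its minimum. The vanishing of $H^{p,0}(k,\zz/l)$ for $p\ne 0$ forces $u_\beta\in\zz/l$, and the minimality of $w(m_\beta)$ forces every factor of $m_\beta$ to be a weight-$n$ generator, i.e. $\alpha_{2n}$ or $\beta\alpha_{2n}$, with exactly $s$ (resp. $l-1$) factors. For $l$ odd, graded commutativity gives $(\beta\alpha_{2n})^2=0$, so the only surviving monomials are $\alpha_{2n}^s$ and $\alpha_{2n}^{s-1}(\beta\alpha_{2n})$. These live in different total cohomological degrees ($2ns$ versus $2ns+1$), so bigraded homogeneity of $x$ selects at most one of the two, yielding (\ref{newineq2}).

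The main obstacle I anticipate is reading the required algebraic structure off of \maintwo \ with the correct bookkeeping: one needs both the list of algebra generators (admissible monomials in $P^j$ and $\beta$ applied to $\alpha_{2n}$) together with their bidegrees and scalar weights, and the confirmation that no weight-$n$ generator other than $\alpha_{2n}$ and $\beta\alpha_{2n}$ exists. A secondary subtlety is the implicit restriction to reduced cohomology when $s=0$, without which the claimed inequality would be defeated by pull-backs of elements of $H^{*,*}(k,\zz/l)$ of weight $0$; this is why the neighbouring statements are phrased in terms of $\tilde H$.
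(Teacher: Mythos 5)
Your argument is correct in spirit but rests on a structural input that is different from, and not quite the same as, what the paper actually cites. You assert that $\tilde H^{*,*}(K_{2n},\zz/l)$ is a \emph{free} bigraded-commutative algebra over $H^{*,*}(k,\zz/l)$ on admissible Steenrod monomials applied to $\alpha_{2n}$ --- a motivic Cartan--Serre theorem. What \maintwo\ and \anotherth\ actually provide is a different (dual) packaging: the motive of $K_n$ splits as a direct sum of twisted symmetric powers, so that
$$H^{*,w,s}(K_n,\zz/l)=\bigoplus_{m\ge 1,\ m\equiv s\ (l-1)}Hom_{DM}\bigl(S^m_{tr}(\zz/l(n)[2n]\oplus\zz/l(n)[2n+1]),\zz/l(w)[*]\bigr),$$
together with the sharp vanishing bound $w\ge(\sum m_i)n+(\sum i\,m_i)(l-1)$ for each summand, where $m=\sum m_il^i$ is the $l$-adic expansion. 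The paper's proof runs entirely on this decomposition: the digit-sum estimate gives the inequality, and equality forces $\sum i\,m_i=0$, i.e.\ $m<l$, whereupon $S^m_{tr}$ is a single pair of Tate twists whose corresponding classes are identified directly as $\alpha_{2n}^m$ and $(\beta\alpha_{2n})\alpha_{2n}^{m-1}$. Nowhere is the free-algebra structure, nor the identification of a generating set by admissible operations, needed or invoked.

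The gap in your proposal is therefore that the free-algebra-on-admissible-monomials description is asserted rather than deduced. It is not what \maintwo\ states, and the step ``the only algebra generators of motivic weight exactly $n$ are $\alpha_{2n}$ and $\beta\alpha_{2n}$'' presupposes that the generators really are the admissible $P^I\alpha_{2n}$, with no exotic low-weight generators arising from the base $H^{*,*}(k,\zz/l)$ or from nontrivial module extensions. If you grant yourself this Cartan--Serre description, the rest of your bookkeeping (scalar weights are multiplicative, $w(m_\beta)\ge n\sum e_I$, and $(\beta\alpha_{2n})^2=0$ by graded commutativity in odd degree) is correct and does yield both parts of the lemma. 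But the economical path, and the one the paper actually takes, is to work at the level of the motive rather than the cohomology algebra: the symmetric power decomposition makes both the weight bound and the identification of the extremal classes purely mechanical, with no need to know what the algebra generators are.
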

\begin{proof}
We may assume that $n>0$. Then by \anotherth we have
$$H^{*,w,s}(K_n,\zz/l)=\bigoplus_{m\ge 1, m\equiv s\, mod\, (l-1)}Hom_{DM}(S^m_{tr}(\zz/l(n)[2n]\oplus\zz/l(n)[2n+1]),\zz/(w)[*])$$
By \maintwo  one has
$$Hom_{DM}(S^m_{tr}(\zz/l(n)[2n]\oplus\zz/l(n)[2n+1]),\zz/(w)[*])=0$$
for 
$$w<(\sum m_i)n+(\sum i m_i)(l-1)$$
where $m=\sum m_i l^i$, $0\le m_i\le l-1$.

If $s>0$ we have
$$sn\le (\sum m_i)n+(\sum i m_i)(l-1)$$
for any $m$ such that $m\equiv s\, mod\, (l-1)$ since $\sum m_i\equiv m\, mod\, (l-1)$. If $s=0$ we have $\sum m_i\equiv 0\, mod\, (l-1)$ and since $\sum m_i>0$ we conclude that $\sum m_i\ge l-1$ and we get 
$$(l-1)n\le (\sum m_i)n+(\sum i m_i)(l-1).$$
An equality may be achieved only if $\sum i m_i=0$ i.e. if $m<l$. For $m<l$ we have
$$S^m_{tr}(\zz/l(n)[2n]\oplus\zz/l(n)[2n+1])=\zz/l(nm)[2nm]\oplus \zz/l(nm)[2nm+1]$$
and it is easy to see that the corresponding motivic cohomology classes of $K_n$ are $\alpha_{2n}^m$ and $(\beta\alpha_{2n})\alpha_{2n}^{m-1}$. For $s>0$ we have $m=s$ and for $s=0$ we have $m=l-1$ which finishes the proof.
\end{proof}
\begin{lemma}
\llabel{newlemma2}
As a $H^{*,*}(Spec(k))$-module, $H^{*,*}(K_n,\zz/l)$ is generated by classes $x$ such that $d(x)\ge 2w(x)$.
\end{lemma}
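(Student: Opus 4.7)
The plan is to identify $H^{*,*}(K_n,\zz/l)$ as a free $H^{*,*}(Spec(k))$-module with explicit generators and then to verify the bidegree inequality $d\ge 2w$ on each of them.

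By \anotherth combined with \maintwo, the motive of $K_n$ splits in $DM$ as a direct sum of Tate motives $\bigoplus_i \zz/l(w_i)[d_i]$, indexed by the Tate pieces arising from the symmetric-power decomposition of $\zz/l(n)[2n]\oplus \zz/l(n)[2n+1]$ that was already invoked in the proof of Lemma \ref{newlemma}. Applying $Hom_{DM}(-,\zz/l(*)[*])$ then gives
$$H^{*,*}(K_n,\zz/l)=\bigoplus_i H^{*-d_i,\,*-w_i}(Spec(k),\zz/l),$$
a free $H^{*,*}(Spec(k))$-module with one generator of bidegree $(d_i,w_i)$ per Tate summand. The lemma therefore reduces to the combinatorial claim that every Tate summand $\zz/l(w_i)[d_i]$ appearing in the decomposition of $S^m_{tr}(\zz/l(n)[2n]\oplus \zz/l(n)[2n+1])$ satisfies $d_i\ge 2 w_i$.

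To verify this inequality I would unpack the explicit formula for symmetric powers from \maintwo. Each Tate summand is assembled from three types of \emph{building block}: the factor $\zz/l(n)[2n]$, for which $d-2w=0$; the factor $\zz/l(n)[2n+1]$, for which $d-2w=1$; and Milnor-type twists of bidegree $(2l^j-1,l^j-1)$ arising from the digits of the $l$-adic expansion of $m$, for which $d-2w=1$. Each building block satisfies $d\ge 2w$, and since the quantity $d-2w$ is additive under tensor products of Tate motives, the inequality $d_i\ge 2w_i$ propagates to every summand $\zz/l(w_i)[d_i]$. As a sanity check, in the equality case $m<l$ already treated in the proof of Lemma \ref{newlemma}, one recovers $\zz/l(nm)[2nm]\oplus \zz/l(nm)[2nm+1]$, both summands of which satisfy $d\ge 2w$.

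The main obstacle is purely notational: correctly reading off the enumeration of Tate summands and their bidegrees from the formula of \maintwo. Once this is done, the bidegree check $d_i\ge 2w_i$ is an additive verification on manifestly nonnegative quantities, and no further geometric input is required.
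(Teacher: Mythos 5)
Your approach is essentially the paper's: the paper's own proof is the one-liner ``It follows immediately from \anotherth, \maintwo{} and the definition of a proper Tate object (loc.\ cit.),'' and the term ``proper Tate object'' in \cite{Red2sub} is precisely a direct sum of Tate motives $\zz/l(w)[d]$ with $d\ge 2w$. So the paper outsources the bidegree inequality entirely to that definition, while you unpack it by hand via the symmetric-power decomposition and an additivity argument on $d-2w$. Both arguments rest on the same two ingredients: the freeness of $H^{*,*}(K_n,\zz/l)$ over $H^{*,*}(Spec(k))$ on generators dual to the Tate summands, and the inequality $d\ge 2w$ on each summand.

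One detail in your reconstruction deserves a second look: you claim the extra ``Milnor-type'' building blocks have bidegree $(2l^j-1,\,l^j-1)$, but the lower bound quoted in the proof of Lemma~\ref{newlemma}, namely $w\ge (\sum m_i)n+(\sum i\, m_i)(l-1)$, suggests that the twist attached to the $i$-th $l$-adic digit contributes $i(l-1)$ to the weight rather than $l^i-1$. These disagree already for $i=2$. This does not break your conclusion, since either normalization still gives $d-2w\ge 0$ per block and your additivity step carries through, but if you want a self-contained verification rather than a citation to the definition of proper Tate object you should match the bidegrees against the precise statement of \maintwo.
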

\begin{proof}
It follows immediately from \anotherth, \maintwo and the definition of a proper Tate object (loc. cit.).
\end{proof}

The first condition of the theorem means that
$$\phi_i(\alpha_{2n+1})\in \tilde{H}^{2nl+2,nl,1}(K_{2n+1},\zz/l)$$
The second condition says that $\phi_i(\alpha_{2n+1})$ lie in the
kernel of the homomorphism
$$\tilde{H}^{2nl+2,nl}(K_{2n+1},\zz/l)\sr
\tilde{H}^{2nl+2,nl}(\Sigma^1_sK_{2n},\zz/l))$$
defined by the obvious morphism
\begin{eq}
\llabel{mori}
i:\Sigma^1_sK_{2n}\sr K_{2n+1}.
\end{eq}
The statement of the theorem follows now from the proposition below.
\begin{proposition}
\llabel{ker2}
The kernel of the homomorphism 
\begin{eq}
\llabel{mori2}
\tilde{H}^{2nl+2,nl,1}(K_{2n+1},\zz/l)\sr
\tilde{H}^{2nl+2,nl}(\Sigma^1_sK_{2n},\zz/l)
\end{eq}
is generated by one element.
\end{proposition}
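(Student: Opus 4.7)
The plan is to compute both sides of (\ref{mori2}) using the symmetric-power decomposition of motivic cohomology of Eilenberg-Maclane spaces provided by \anotherth and \maintwo (following the method in the proof of Lemma \ref{newlemma}), and to determine the kernel by a dimension count. The expected generator of the kernel is $\beta P^n(\alpha_{2n+1})$, and the proof reduces to showing that up to scalar this is the unique class of bidegree $(2nl+2,nl)$ and scalar weight $1$ in $\tilde H^{\ast,\ast}(K_{2n+1},\zz/l)$ which vanishes under $i^*$.

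First, since $i$ commutes with the $(\zz/l)^*$-action, the map $i^*$ preserves scalar weight. Combined with the simplicial suspension isomorphism $\tilde H^{2nl+2,nl}(\Sigma^1_s K_{2n},\zz/l)\cong \tilde H^{2nl+1,nl}(K_{2n},\zz/l)$, this reduces the problem to computing the kernel of a map $\tilde H^{2nl+2,nl,1}(K_{2n+1},\zz/l)\sr \tilde H^{2nl+1,nl,1}(K_{2n},\zz/l)$. Applying \anotherth decomposes both groups as direct sums over $m\ge 1$ with $m\equiv 1\bmod (l-1)$ of $Hom_{DM}$-groups out of symmetric powers $S^m_{tr}$ of the appropriate generating Tate object, and the weight bound from \maintwo combined with $w=nl$ restricts the contributing indices to the finite set $\{1,l,l^2,\ldots,l^n\}$.

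Next I would identify $i^*$ on each summand. Functoriality of symmetric powers together with $i^*(\alpha_{2n+1})=\sigma_s\alpha_{2n}$ identifies, for each $m$, the source and target summands up to a natural shift. The image of $\beta P^n(\alpha_{2n+1})$, which lies in the $m=l$ summand of the source, is
\[
i^*\beta P^n(\alpha_{2n+1})=\sigma_s\beta P^n(\alpha_{2n})=\sigma_s\beta(\alpha_{2n}^l)=l\sigma_s(\alpha_{2n}^{l-1}\beta\alpha_{2n})=0\bmod l,
\]
which places this class in the kernel. The remaining task is to show that on each other summand the induced map is injective.

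The main obstacle is verifying this injectivity on the summands for $m=l^k$ with $k\ge 2$ (which occur when $n\ge 2$). The symmetric powers $S^{l^k}_{tr}$ of motives with mixed even/odd shifts involve divided powers and motivic Steenrod-type phenomena, and their precise structure requires the full force of \maintwo. Once this matching between source and target summands is established, a direct dimension count shows that exactly one ``extra'' summand in the source has no counterpart in the target, namely the one-dimensional span of $\beta P^n(\alpha_{2n+1})$, completing the proof.
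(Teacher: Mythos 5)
Your proposal takes a genuinely different route from the paper. The paper realizes $K_{2n+1}$ as the simplicial classifying space $B_\bullet K_{2n}$ and runs the spectral sequence of the skeletal filtration $sk_p(B_\bullet K_{2n})$. The crucial structural point is that $sk_1 B_\bullet K_{2n}=\Sigma^1_s K_{2n}$, so the morphism $i$ in (\ref{mori}) is the inclusion of the $1$-skeleton and $i^*$ is exactly the edge map to $E_1^{1,*,*,1}$. Hence the kernel of (\ref{mori2}) is identified a priori with $E_\infty^{2,2nl,nl,1}\subset E_2^{2,2nl,nl,1}$, with no need to analyze $i^*$ summand-by-summand. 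Cyclicity of $E_2^{2,2nl,nl,1}$ is then obtained by mapping the normalized cochain complex $\tilde C^{*,l}$ of $B_\bullet{\bf G}_a$ into the $E_1$-page (Lemma \ref{actneed3} makes (\ref{homc2.5}) an isomorphism for $p>1$) and invoking Lazard's computation of $H^*(B{\bf G}_a,{\bf G}_a)$.

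The gap in your plan is precisely at what you flag as ``the remaining task.'' After decomposing both sides via \anotherth{} and pinning down the contributing $m$, you still need to prove that $i^*$ is injective on every summand except the one containing $\beta P^n(\alpha_{2n+1})$, and the closing assertion that ``a direct dimension count shows that exactly one extra summand in the source has no counterpart in the target'' is not something that follows from what you have set up. The map $i^*$ is the motivic analogue of the cohomology suspension $H^*(K(A,m+1))\sr H^{*-1}(K(A,m))$, which kills all decomposable classes and has a large kernel in general; it is only after restricting to the particular bidegree $(2nl+2,nl)$ and scalar weight $1$ that the kernel becomes small, and seeing this requires genuine input. In particular, you would need to identify how each $Hom_{DM}(S^{l^k}_{tr}(\cdot),\cdot)$ summand of $\tilde H^{2nl+2,nl,1}(K_{2n+1})$ maps to the corresponding decomposition of $\tilde H^{2nl+1,nl,1}(K_{2n})$ and verify injectivity there, which is exactly the ``divided powers and Steenrod-type phenomena'' you acknowledge but do not handle; naturality of symmetric powers together with $i^*\alpha_{2n+1}=\sigma_s\alpha_{2n}$ does not by itself give that matching, because the source and target decompositions sit over different motivic Eilenberg--MacLane objects. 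The bar-spectral-sequence argument is engineered precisely to bypass this: it converts the kernel computation into a statement about the filtration-$\ge 2$ part, where the $E_1$-page is described by smash powers $K_{2n}^{\wedge p}$ via Lemma \ref{later2}, and there the weight estimates of Lemmas \ref{newlemma}, \ref{actneed1}--\ref{actneed3} close the argument. Without a substitute for that mechanism your dimension count remains unsubstantiated.
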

\begin{proof}
We can choose $K_{2n}$ to be a sheaf of abelian groups. Then we may
realize $K_{2n+1}$ as the simplicial sheaf $B_{\bullet}K_{2n}$ where
$B_{\bullet}$ refers to the standard simplicial classifying space of a
group space such that 
$$B_p(K_{2n})=K_{2n}^p.$$
Let $M(w)$ be fibrant (injective) model for the complex
$\zz/l(w)$. The complexes $\tilde{H}^0(B_pK_{2n},M(w))$ form a
cosimplicial complex and we let 
$$N\tilde{H}^0(B_*K_{2n},M(w))$$
denote the corresponding normalized bicomplex. Note that its terms
along the former cosimplicial dimension are of the form
$\tilde{H}^0(K_{2n}^{\wedge p},M(w))$. Then we have
$$\tilde{H}^{d,w}(K_{2n+1},\zz/l)=H^{d}(Tot(N\tilde{H}^{0}(B_{\bullet}K_{2n},M(w))))$$
where $Tot$ refers to the total complex of our bicomplex. Hence we
have a standard spectral sequence of a bicomplex with the $E_1$ term
of the form
\begin{eq}
\llabel{specseq} E_1^{p,q}=H^{q}(N\tilde{H}^{0}(B_*K_{2n},
M(w))_p)=\tilde{H}^{q,w}(K_{2n}^{\wedge p},\zz/l)
\end{eq}
which tries to converge to $\tilde{H}^{p+q,w}(K_{2n+1},\zz/l)$. To
keep track of the motivic weight of our cohomology groups we will use
a third index $E^{p,q,w}_r$ for the terms of this spectral sequence. 

One can easily see that this spectral sequence coincides with the
spectral sequence defined by the skeletal filtration
\begin{eq}
\llabel{skfiltr}
sk_0(B\BB K_{2n})\subset sk_1(B\BB K_{2n})\subset\dots\subset
sk_p(B_{\bullet}K_{2n})\subset\dots
\end{eq}
on the simplicial sheaf $B\BB K_{2n}$. Note that the first term of
this filtration $sk_1 B_{\bullet}K_n$ is $\Sigma^1_sK_n$ and the
morphism (\ref{mori}) is the natural inclusion
$$i:sk_1
B_{\bullet}K_n\sr B_{\bullet}K_n.$$
\begin{lemma}
\llabel{conv}
The spectral sequence (\ref{specseq}) converges to
$\tilde{H}^{p+q,w}(K_{2n+1},\zz/l)$. 
\end{lemma}
\begin{proof}
Interpreting (\ref{specseq}) as the spectral sequence associated with
the filtration (\ref{skfiltr}) we see that to prove the convergence it
is enough to show that for a given $w$ there exists $N$ such that for
all $p>N$ one has
$$\tilde{H}^{*,w}(sk_{p}(B_{\bullet}K_{2n})/sk_{p-1}(B_{\bullet}K_{2n}),\zz/l)=0.$$ 
It is easy to see that we have
$$sk_{p}(B_{\bullet}K_{2n})/sk_{p-1}(B_{\bullet}K_{2n})=\Sigma^{p}_s
K_{2n}^{\wedge p}$$
where $\Sigma_s$ is the simplicial suspension. On the other hand by
\cite[Cor. 3.4]{Redpub} we know that $K_{2n}$ is n-fold $T$-connected and
therefore $K_{2n}^{\wedge p}$ is $np$-fold $T$-connected and its
motivic cohomology of weight $<np$ are zero.
\end{proof}
Let us consider now what the spectral sequence (\ref{specseq}) says
about the group $A=\tilde{H}^{2nl+2,nl,1}(K_{2n+1},\zz/l)$.
Note first that since the spectral sequence is constructed out of a
filtration which respects the action of $Aut(\zz/l)$ it splits into a
direct sum of spctral sequences $E_r^{p,q,w,s}$ for individual scalar
weights $s=0,\dots,l-2$. Hence the groups which contribute to $A$ are of
the form
\begin{eq}
\llabel{e1descr}
E_1^{p,2nl+2-p,nl,1}=\tilde{H}^{2nl+2-p,nl,1}(K_{2n}^{\wedge p},\zz/l)
\end{eq}

\begin{lemma}
\llabel{actneed1}
For any $p>1$, $q<nl$ one has
$$\tilde{{H}}^{*,q,1}(K^{\wedge p}_{2n},\zz/l)=0$$
\end{lemma}
\begin{proof}
By Lemma \ref{later2} it is sufficient to consider elements of the form
$x=x_1\oo\dots\oo x_p$ where $x_i$ are elements of
$\tilde{H}^{*,*}(K_{2n},\zz/l)$ with a well defined scalar weight. Suppose that $s(x)=1$. Since $p>1$ there are two
possibilities. Either $s(x_i)=0$ for some $i$ or
\begin{eq}
\llabel{ineq}
s(x_1)+\dots+s(x_p)\ge l
\end{eq}
In the first case we may assume without loss of generality that $s(x_1)=0$. Then by Lemma \ref{newlemma} $w(x_1)\ge (l-1)n$ and since $w(x_2)\ge n$ we conclude that  $w(x)\ge nl$. In the second case Lemma \ref{newlemma} implies that $w(x)=\sum w(x_i)\ge (\sum s(x_i))n\ge n$.
\end{proof}
\begin{lemma}
\llabel{actneed2} For any $p\ge 3$ one has
$$\tilde{{H}}^{2nl+2-p,nl,1}(K^{\wedge p},\zz/l)=0$$
\end{lemma}
\begin{proof}
By Lemma \ref{later2} it is sufficient to consider elements of the form  $a\, x_1\oo\dots\oo x_p$ where $a\in H^{d,v}(Spec(k))$ and
$$x=x_{1}\oo\dots\oo x_{p}\in H^{2nl+2-p-d,nl-v,1}(K_{2n}^{\wedge p},\zz/l)$$
By Lemma \ref{newlemma2} we may further assume that $d(x_i)\ge 2w(x_i)$.  By Lemma \ref{actneed1} we
conclude that for $v=0$. Since $H^{d,0}(Spec(k))=0$ for $d<0$ and $p>2$ this shows that $x=0$.
\end{proof}
Lemma \ref{actneed2} together with (\ref{e1descr}) show that there is
a short exact sequence
$$0\sr E_{\infty}^{2,2nl,nl,1}\sr
\tilde{H}^{2nl+2,nl,1}(K_{2n+1},\zz/l)\sr E_{\infty}^{1,2nl+1,nl,1}\sr
0$$
For $p=1$ the incoming differentials are zero starting with $d_1$ and
hence $E_{\infty}$ is contained in $E_1$ and we have an exact sequence
$$0\sr E_{\infty}^{2,2nl,nl,1}\sr
\tilde{H}^{2nl+2,nl,1}(K_{2n+1},\zz/l)\sr
\tilde{H}^{2nl+1,nl,1}(K_{2n},\zz/l)$$
where the last arrow is exactly (\ref{mori2}). It remains to show that 
$E_{\infty}^{2,2nl,nl,1}$ is generated by one element. Since this is a
subgroup of the corresponding $E_2$ term it is sufficient to show that
this $E_2$ term is generated by one element. 

The $E_2^{p,q,nl,s}$ term is the cohomology of the complex
$$\tilde{H}^{q,nl,s}(K_{2n}^{\wedge(p-1)},\zz/l)\sr
\tilde{H}^{q,nl,s}(K_{2n}^{\wedge p},\zz/l)\sr
\tilde{H}^{q,nl,s}(K_{2n}^{\wedge (p+1)},\zz/l)$$
where the differential is defined by the differential in the normalized
complex corresponding to $B\BB K_{2n}$. 
\comment{Replacing it by
the quasi-isomorphic complex obtained by taking alternated sums of
morphisms defined by face maps instead of normalization we see that
our $E_2$ term can be computed as cohomology of the complex
$$\tilde{H}^{q,nl,s}(K_{2n}^{p-1},\zz/l)\sr
\tilde{H}^{q,nl,s}(K_{2n}^{p},\zz/l)\sr
\tilde{H}^{q,nl,s}(K_{2n}^{p+1},\zz/l)$$
where the differential is given by alternated sums of morphisms defined
by face maps in $B\BB K_{2n}$.} 
\begin{lemma}
\llabel{actneed3} 
For $p>1$ the group
$$D_p=\tilde{H}^{2nl,nl,1}(K_{2n}^{\wedge p},\zz/l)$$
is a free $\zz/l$ module generated by monomials of the form
$$\alpha_{2n}^{i_1}\wedge\dots\wedge\alpha_{2n}^{i_p}$$
where $i_j>0$ and $\sum_j i_j=l$. 
\end{lemma}
\begin{proof}
Note first that these monomials are linearly independent by Lemmas
\ref{later2} and \ref{later3}. It remains to show that they generate
$D_p$ as a $\zz/l$-module.  By Lemma \ref{later2} and Lemma \ref{newlemma2} we conclude that it is sufficient to consider elements of the form $x=a x_1\wedge\dots\wedge x_p$ where $a\in H^{*,*}(Spec(k))$ and 
$$\sum_i s(x_i)\equiv 1\, mod\, (l-1)$$
$$d(x_i)\ge 2w(x_i)$$
By Lemma \ref{actneed1} we conclude that $a\in H^{*,0}(Spec(k))$ and since $H^{>0,0}(Spec(k))=0$ and $H^{*,*}(Spec(k))=\zz/l$ we may assume that $a=1$. Now a series of elementary calculations based on Lemma \ref{newlemma} finish the proof.
\end{proof}
To proceed further we will use a techique which allows one to obtain
elements in the $E_2$ term of the spectral sequence associated with
the skeletal filtration on $B\BB G$ for any sheaf of groups $G$.
Let $v:G\times G\sr G$ be the morphism given by $(g_1,g_2)\mapsto g_1
g_2^{-1}$. Note that the face map
$$\partial_i:G^{p+1}\sr G^p$$
in $B\BB G$ is of the form
$$
\partial_i(g_0,\dots,g_{p})=\left\{
\begin{array}{ll}
(g_0,\dots,\hat{g}_i,\dots,g_p)&\mbox{\rm  for $i\le p$}\\
(g_0g_p^{-1},\dots, g_{p-1}g_{p}^{-1})&\mbox{\rm  for $i=p$}
\end{array}
\right. 
$$
Let $\gamma$ be an element in $H^{d,w}(G,\zz/l)$ such that
\begin{eq}
\llabel{gammacond}
v^*(\gamma)=\gamma\wedge 1 - 1\wedge \gamma.
\end{eq}
Consider the pointed simplicial scheme $B\BB {\bf G}_a$ over $\zz/l$
and let 
$$C^{\bullet}={\cal O}(B\BB {\bf G}_a)$$
be the corresponding (reduced) cosimplicial abelian group. Then
$C^0=0$ and for $p>0$ the terms of $C^{\bullet}$ are polynomial rings
$$C^p=\zz/l[x_1,\dots,x_p]$$
and the face maps are given by obvious explicit formulas. Note that
the face map are homogenious in $x_i$ of degree $1$ and therefore we
may consider $C^{\bullet}$ as a graded simplicial abelian group. We
will write this grading by degrees in $x_i$'s as the second index.

Define homomorphisms
$$C^{p,q}\sr H^{dq,wq}(G^p,\zz/l)$$
by the rule $x_i\mapsto 1\wedge\dots\wedge\gamma\wedge\dots\wedge 1$
where $\gamma$ is on the $i$-th place. One verifies immediately that
our condition on $v^*(\gamma)$ implies that these homomorphisms define
a homomorphism of complexes
\begin{eq}
\llabel{homc}
\tilde{C}^{*,q}\sr E_1^{*,dq,wq}
\end{eq}
where $\tilde{C}^*$ is the normalized complex defined by the
cosimplicial abelian group $C^{\bullet}$ and $E_1^{*,dq,wq}$ is the
appropriate row of our spectral sequence for $B\BB G$ with $d_1$ as
the differential. The cohomology of $\tilde{C}^*$ are the cohomology
groups $H^{*}(B{\bf G}_a, {\bf G}_a)$ over $\zz/l$. Hence, any
$\gamma$ as above defines a homomorphism
\begin{eq}
\llabel{cohcoh}
H^{p,q}(B{\bf G}_a, {\bf G}_a)\sr E_2^{p,dq,wq}
\end{eq}
where the second grading on the left hand side is defined by the
polynomial degree of the cocycles. 

Let us return now to the case when $G=K_{2n}$ and
$\gamma=\alpha_{2n}$. Note that the condition (\ref{gammacond}) is
satisfied since $\alpha_{2n}$ is defined by the identity homomorphism
of the abelian group $K_{2n}$ and hence its composition with
$v:K_{2n}\times K_{2n}\sr K_{2n}$ is exactly $\alpha_{2n}\oo
1-1\oo\alpha_{2n}$. Since $\gamma$ is homogenious of degree $1$ with
respect to the scalar weight the homomorphism (\ref{homc}) in this
case is of the form
\begin{eq}
\llabel{homc2}
\tilde{C}^{*,q}\sr E_1^{*,2nq,nq,q\, mod\, (l-1)}
\end{eq}
The part of this homomorphism we are interested in at the moment is 
\begin{eq}
\llabel{homc2.5}
\tilde{C}^{p,l}\sr E_1^{p,2nl,nl,1}
\end{eq}
Lemma \ref{actneed3} implies immedialtely that (\ref{homc2.5}) is an
isomorphism for $p>1$. Therefore, the corresponding map
\begin{eq}
\llabel{homc3}
H^{p,l}(B{\bf G}_a, {\bf G}_a)\sr E_2^{p,2nl,nl,1}
\end{eq}
is surjective for $p=2$ and is an isomorphism for $p>2$. It remains to
show that for $p=2$ the left hand side of (\ref{homc3}) is generated
by one element. This follows immediately from the computation of
$H^*(B{\bf G}_a, {\bf G}_a)$ given in \cite[Th.12.1, p.375]{Lazard}.
\begin{remark}\rm
Note that if (\ref{gammacond}) is
satisfied for an element $\gamma$ then it is also satisfied for
$u(\gamma)$ for any motivic Steenrod operation $u$. Hence we can
extend homomorphism (\ref{cohcoh}) to a homomorphism
\begin{eq}
\llabel{cohcohst} {\cal A}^{a,b}\oo_{\zz/l} H^{p,q}(B{\bf G}_a, {\bf
G}_a)\sr E_2^{p,a+dq,b+wq}
\end{eq}
\end{remark}

\end{proof}

\subsection{Computations with symmetric powers}
\llabel{s2}
In this section we fix a prime $l$ and consider the categories of
motives with coefficients in ${R}$ where $R$ is a commutative ring
such that all primes but $l$ are invertible in ${R}$. For our
applications we will need the cases of $R=\zz_{(l)}$ and
$R=\zz/l$. Our goal is to prove several results about the structure of
the symmetric powers $S^i(M)$ for $i<l$ when $M$ is a Tate motives of
the form
$${{R}}(p)[2q]\sr M\sr {{R}}\sr {{R}}(p)[2q+1]$$
and to use these results to define a cohomological operation
$$\phi_{l-1}:H^{2q+1,p}(-,R)\sr H^{2ql+2,pl}(-,R)$$
Let us first consider an arbitrary tensor additive category $C$ which
is $R$-linear and Karoubian (has images of projectors). For any $i<l$
and any $M$ in $C$ define the symmetric power $S^i(M)$ as follows. The
symmetric group $S_i$ acts by permutations on $M^{\oo i}$. Since $i!$
is invertible in our coefficients ring we may consider the averaging
projector $p:M^{\oo i}\sr M^{\oo i}$ given by
$$p=(1/i^!)\sum_{\sigma\in S_i} \sigma$$
We set $S^i(M):=Im(p)$. We will use morphisms
$$a:S^i(M)\sr S^{i-1}(M)\oo M$$
and
$$b:S^{i-1}(M)\oo M\sr S^{i}(M)$$
where $a$ is defined as the quotient of the morphism $\tilde{a}:M^{\oo
i}\sr M^{\oo i}$ given by
$$\tilde{a}(m_1\oo\dots\oo m_i)=\sum_{j=1}^i (m_1\oo\dots\oo
\hat{m}_j\oo\dots\oo m_i)\oo m_j$$
and $b$ is the quotient of the identity morphism. 

Let us consider now the case when $C=DT({\cal X},R)$ for a smooth
simplicial scheme $\cal X$ and $M$ is a motive which is given together
with a distinguished triangle of the form
$${{R}}(p)[2q]\stackrel{x}{\sr} M\stackrel{y}{\sr}
{{R}}\stackrel{\alpha}{\sr} {{R}}(p)[2q+1]$$
where $p,q\ge 0$. Composing $a$ with the morphism defined by $y$ we
get a morphism 
$$u:S^{i}(M)\sr S^{i-1}(M)$$
and composing $b$ with the morphism defined by $x$ we get a morphism
$$v:S^{i-1}(M)(p)[2q]\sr S^{i}(M)$$
\begin{lemma}
\llabel{mainseq}
There exist unique morphisms
$$r:S^{i-1}(M)\sr {{R}}(ip)[2iq+1]$$
and 
$$s:{{R}}\sr S^{i-1}(M)(p)[2q+1]$$
such that the sequences 
\begin{eq}
\llabel{seq1}
{{R}}(ip)[2iq]\stackrel{x^i}{\sr}S^i(M)\stackrel{u}{\sr}
S^{i-1}(M)\stackrel{r}{\sr} {{R}}(ip)[2iq+1]
\end{eq}
\begin{eq}
\llabel{seq2} S^{i-1}(M)(p)[2q]\stackrel{v}{\sr}
S^{i}(M)\stackrel{y^i}{\sr}{{R}}\stackrel{s}{\sr}
S^{i-1}(M)(p)[2q+1]
\end{eq}
are distinguished triangles. If $p>0$ then these triangles are
isomorphic to the triangles
$$\Pi_{\ge ip}(S^i(M))\sr S^i(M)\sr \Pi_{< ip}(S^i(M))\sr
\Pi_{\ge ip}(S^i(M))[1]$$
and
$$\Pi_{\ge p}(S^i(M))\sr S^i(M)\sr \Pi_{<p}(S^i(M))\sr
\Pi_{\ge p}(S^i(M))[1]$$
\end{lemma}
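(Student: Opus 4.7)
The plan is to prove the lemma by induction on $i$. The base case $i=1$ is immediate: $S^1(M)=M$ and $S^0(M)=R$, and both sequences reduce to the given triangle for $M$ with $r=s=\alpha$.

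For the inductive step, the crucial input is that $i<l$ is invertible in $R$. An elementary calculation on symmetric elements gives $b\circ a = i\cdot\mathrm{id}_{S^i(M)}$: the map $a$ pulls out each of the $i$ tensor factors to produce $i$ summands, each of which is re-symmetrized by $b$ to the original element. Hence $(a/i,b)$ exhibits $S^i(M)$ as a canonical direct summand of $S^{i-1}(M)\otimes M$, yielding a splitting $S^{i-1}(M)\otimes M = S^i(M)\oplus K$. This also makes it easy to verify the composition vanishings $u\circ x^i = 0$ and $y^i\circ v = 0$: one computes $a\circ x^i = i\cdot(x^{i-1}\otimes x)$, so $u\circ x^i = i\cdot x^{i-1}\otimes(y\circ x)=0$, and $y^i\circ b = y^{i-1}\otimes y$ as maps from $S^{i-1}(M)\otimes M$ to $R$, so $y^i\circ v = y^{i-1}\otimes(y\circ x)=0$.

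Existence of the triangles then follows from the octahedral axiom. For (\ref{seq2}), I apply the octahedron to the factorization $v=b\circ(1\otimes x)$: tensoring the given triangle for $M$ with $S^{i-1}(M)$ identifies the cone of $1\otimes x$ as $S^{i-1}(M)$, and since $b$ is a split surjection with kernel $K$, the cone of $b$ is $K[1]$. The resulting octahedral triangle has the form $S^{i-1}(M)\to \mathrm{cone}(v)\to K[1]\to S^{i-1}(M)[1]$, and comparing its connecting morphism to the (rotated) triangle for $M$ by way of the inductive hypothesis for $i-1$ identifies $\mathrm{cone}(v)\cong R$, with the induced quotient map $S^i(M)\to R$ being $y^i$; this produces the desired $s$. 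Triangle (\ref{seq1}) is built dually using the octahedron applied to $u=(1\otimes y)\circ a$.

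Uniqueness of $r$ and $s$ follows from the triangulated-categorical uniqueness of the third map together with the structure of $DT(\mathcal{X},R)$: any ambiguity is governed by a $\mathrm{Hom}$ group between Tate-type motives of disjoint weight ranges when $p>0$, which vanishes. The identification with the weight-truncation triangles in the case $p>0$ is then immediate: the subobject $\Pi_{\geq ip}S^i(M)$ is forced to be the top weight piece $R(ip)[2iq]$ (embedded via $x^i$), with complementary quotient of weights $<ip$ matching $S^{i-1}(M)$, and symmetrically for (\ref{seq2}); uniqueness matches the two triangles. The main technical obstacle is the octahedral identification $\mathrm{cone}(v)\cong R$: one must show that the $K$-summand contributes no nontrivial extension, and this is where the inductive description of $S^{i-1}(M)$ via (\ref{seq2}) at level $i-1$ combined with the invertibility of $i$ enters essentially.
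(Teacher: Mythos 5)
Your approach is genuinely different from the paper's, and it has real gaps. The paper's proof for $p>0$ does not use induction on $i$ or the octahedral axiom at all; instead it invokes a uniqueness criterion for distinguished triangles in the category of Tate motives (\refunext) and verifies its hypotheses by computing slices: $s_*(S^i(M))=\oplus_{j=0}^i R(pj)[2qj]$, $s_*(u)(t^j)=(i-j)t^j$, $s_*(v)(t^j)=t^{j+1}$. The invertibility of $i-j$ for $j<i$ is exactly what makes the criterion apply. Your octahedral route starts from the same elementary facts ($b\circ a = i\cdot\mathrm{id}$, the composition vanishings) but diverges from there.

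The central problem is the step you yourself flag as ``the main technical obstacle'': identifying $\mathrm{cone}(v)\cong R$. The octahedral axiom only yields a triangle
$S^{i-1}(M)\to\mathrm{cone}(v)\to K[1]\to S^{i-1}(M)[1]$ where $K=\ker(b)$, and the comparison you propose --- matching this against the rotated triangle (\ref{seq2}) at level $i-1$, whose third term is $S^{i-2}(M)(p)[2q+1]$ --- would require an isomorphism $K\cong S^{i-2}(M)(p)[2q]$. This fails: the complement of $S^i(M)$ in $S^{i-1}(M)\otimes M$ is the Schur (hook) summand $S_{(i-1,1)}(M)$, not a shifted lower symmetric power, so the inductive hypothesis does not give you the desired morphism of triangles. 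In practice, to pin down $\mathrm{cone}(v)$ one ends up computing its slices anyway, at which point you have essentially reconstructed the paper's argument minus the octahedral detour. A related issue is the word ``dually'' for (\ref{seq1}); that needs the same unfinished identification.

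A second gap: the lemma asserts existence and uniqueness of $r$, $s$ for all $p,q\ge 0$, but your uniqueness argument (``Hom group between Tate-type motives of disjoint weight ranges'') is explicitly restricted to $p>0$, and you never address $p=0$. The paper treats $p=0,\,q>0$ via the canonical $t$-structure on $DLC(\mathcal{X},R)$ (showing $x^i$ and $u$ realize the canonical truncations $\tau_{\ge 2iq}$ and $\tau_{<2iq}$), and treats $p=q=0$ by reducing to exactness of sequences of presheaves with transfers, term by term over $\mathcal{X}$, using that the constant presheaf with transfers on a smooth scheme is projective. Neither of these cases is covered by your weight-vanishing argument, and they require separate ideas.
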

\begin{proof}
Assume first that $p>0$. Since the category of Tate motives is closed
under tensor products and direct summands the symmetric power of a
Tate motive is a Tate motive. Therefore it is sufficient to verify
that the first three terms of the sequences (\ref{seq1}) and
(\ref{seq2}) satisfy the conditions of \refunext for $n=ip$
and $n=p$ respectively.

By \refsltensor one has
$$s_*(M^{\oo i})=s_*(M)^{\oo i}$$
which immediately implies that
$$s_*(S^i(M))=S^i(s_*(M))$$
and that these isomorphisms are compatible with the maps $a,b$.  Since
$p>0$ we have $s_*(M)={{R}}\oplus {{R}}(p)[2q]$ and therefore
$$s_*(S^i(M))=\oplus_{j=0}^i {{R}}(pj)[2qj].$$
where the morphism ${R}(pj)[2qj]\sr s_*(S^i(M))$ is $s_*(x^j)$. We
denote this morphism by $t^j$. Computing the slices of the morphisms
involved in (\ref{seq1}) and (\ref{seq2}) one gets:
\begin{eq}
\llabel{foru}
s_*(u)(t^j)=(i-j)t^j
\end{eq}
\begin{eq}
\llabel{forv} 
s_*(v)(t^j)=t^{j+1}
\end{eq}
The morphism $x^i$ is $t^i$. Since $i-j$ are invertible for all
$j=0,\dots, i-1$ this implies together with (\ref{foru}) that
(\ref{seq1}) satisfies the conditions of \refunext. The
morphism $y^i$ takes $t^j$ to $0$ for $j\ne 0$ and takes $1$ to
$1$. This implies together with (\ref{forv}) that (\ref{seq2})
satisfies the conditions of \refunext.

Consider now the case of $p=0$. Using \refdt we can
identify $DT_0$ with a full sybcategory in $DLC({\cal X},R)$. If $q>0$
consider the homology of $S^i(M)$ with respect to the standard
t-structure on $DLC({\cal X},R)$. One can easily see that $x^i$
defines an isomorphism of $R[2iq]$ with $\tau_{\ge 2iq}(S^i(M))$ and $u$
defines an isomorphism of $S^{i-1}(M)$ with $\tau_{< 2iq}(S^i(M))$
where $\tau$ refers to the canonical filtration with respect to our
t-structure. The standard argument shows now that there exists a
unique $r$ with the required property. A similar argument shows the
existence and uniqueness of $s$.

Conisder now the case $p=q=0$. Then the original triangle comes from
an exact sequence of the form
\begin{eq}
\llabel{shortex}
0\sr R\sr M\sr R\sr 0
\end{eq}
in $LC({\cal X})$ and for all $i<l$ we have $S^i(M)\in LC$. To prove
the existence and uniqueness of $r$ and $s$ it is sufficient to show
that the sequences defined by $x^i$ and $u$ and by $v$ and $y^i$ are
exact. We can verify the exactness on each term of $\cal X$
individually. On a smooth scheme the constant presheaf with transfers
is a projective object and therefore the restrictions of
(\ref{shortex}) to each term of $\cal X$ are split exact. The
exactness of the sequences defined by $x^i$ and $u$ and by $v$ and
$y^i$ follows by an easy computation.
\end{proof}
Consider the composition 
$$(r\oo Id_{{{R}}(p)[2q+1]})\circ s:{{R}}\sr {{R}}((i+1)p)[2(i+1)q+2]$$
Since the morphism $\alpha:{{R}}\sr{{R}}(p)[2q+1]$ determines $M$ up to an
isomorphism which commutes with $x$ and $y$ and our construction is
natural with respect to such morphisms in $M$, this composition
depends only on $\alpha$. We denote it by $\phi_i(\alpha)$. Note that
it is defined only for $i<l$. Since our construction is natural in $M$
and the inverse image functors commute with tensor product we get the
following result.
\begin{lemma}
\llabel{phinat}
For any $\alpha\in H^{2q+1,p}({\cal X},R)$ and any morphism of
simplicial schemes $f:{\cal Y}\sr {\cal X}$ one has
$$f^*(\phi_i(\alpha))=\phi_i(f^*(\alpha))$$
\end{lemma}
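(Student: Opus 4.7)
The plan is to reduce everything to the naturality of the constructions used in Lemma \ref{mainseq} together with the uniqueness of $r$ and $s$ asserted there. The inverse image functor $f^*:DT(\mathcal{X},R)\to DT(\mathcal{Y},R)$ is triangulated and symmetric monoidal, so it sends the defining triangle
$$R(p)[2q]\xr{x} M\xr{y} R\xr{\alpha} R(p)[2q+1]$$
to a distinguished triangle of the same shape with $f^*(\alpha)$ as the connecting map. Hence $f^*(M)$ serves as ``the $M$'' used in the construction of $\phi_i(f^*(\alpha))$.

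Next I would observe that symmetric powers in an $R$-linear Karoubian tensor category are defined via the averaging projector $p=(1/i!)\sum_{\sigma\in S_i}\sigma$, which is built from identity, addition, and scalar multiplication; since $f^*$ is additive and monoidal, it commutes with $p$ and therefore with the Karoubian image defining $S^i(-)$. The same formal calculation shows that $f^*$ carries the morphisms $a,b$, and hence $u$ and $v$, for $M$ to the corresponding morphisms for $f^*(M)$. Consequently $f^*$ applied to the triangles (\ref{seq1}) and (\ref{seq2}) produces candidate distinguished triangles for $f^*(M)$ whose first three terms and first two arrows agree with those constructed from $f^*(M)$.

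Now comes the key step: by the uniqueness of $r$ and $s$ in Lemma \ref{mainseq}, the pulled-back morphism $f^*(r)$ must equal the morphism $r'$ produced by applying the construction to $f^*(M)$, and similarly $f^*(s)=s'$. Therefore
$$f^*\!\bigl((r\oo Id_{R(p)[2q+1]})\circ s\bigr)=(f^*(r)\oo Id)\circ f^*(s)=(r'\oo Id)\circ s'=\phi_i(f^*(\alpha)),$$
which is precisely the claimed equality $f^*(\phi_i(\alpha))=\phi_i(f^*(\alpha))$.

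The only mildly delicate point is ensuring that the uniqueness clause of Lemma \ref{mainseq} really applies after pullback, i.e.\ that $f^*$ preserves whatever hypothesis (vanishing of a $Hom$ group coming from \refunext, or the $t$-structure argument in the $p=0$ cases) was used to obtain uniqueness; but this is immediate since those vanishing conditions are stated on the ambient category of Tate motives and are inherited by $f^*$ via its commutation with tensor products and slices. Thus no real obstacle arises and the lemma follows by pure naturality.
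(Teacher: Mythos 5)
Your proof is correct and follows essentially the same route as the paper, which simply asserts the lemma as a consequence of the remark that the construction of $\phi_i$ is natural in $M$ and that $f^*$ commutes with tensor products; you have usefully spelled out that the uniqueness clause of Lemma \ref{mainseq} is what forces $f^*(r)=r'$ and $f^*(s)=s'$.
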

\begin{remark}\rm
One observes easily that $\phi_1(\alpha)=\alpha^2$. One can aslo show
that $\phi_i(\alpha)=0$ for $i<l-1$. We will see in Lemma
\ref{isnotzero} that for $R=\zz/l$ and any $n\ge 0$ the operation
$\phi_{l-1}$ is not identically zero.
\end{remark}
\begin{proposition}
\llabel{prod}
Let $\gamma$ be a morphism of the form ${{R}}\sr{{R}}(r)[2s]$ and
$\sigma$ a morphism of the form ${{R}}\sr{{R}}(p)[2q+1]$. Then one has
$$\phi_i(\gamma\sigma)=\gamma^{i+1}\phi_i(\sigma)$$
\end{proposition}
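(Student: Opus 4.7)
The strategy is to exploit naturality of the construction of $\phi_i$ with respect to a canonical comparison between the motives representing $\sigma$ and $\gamma\sigma$.

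Choose motives $M$ for $\sigma$ and $M'$ for $\gamma\sigma$. Since $\gamma\sigma=\sigma(r)[2s]\circ\gamma$, axiom TR3 produces a morphism $f\colon M'\sr M(r)[2s]$ fitting into a morphism of distinguished triangles whose leftmost vertical is the identity on $R(p+r)[2q+2s]$ and whose rightmost vertical is $\gamma\colon R\sr R(r)[2s]$; in particular $f\circ x'=x(r)[2s]$ and $y(r)[2s]\circ f=\gamma\circ y'$.

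Apply now the symmetric-power functor. Since $R(r)[2s]$ has even shift the Koszul sign on $R(r)[2s]^{\oo j}$ is trivial, hence $S^j(M(r)[2s])=S^j(M)(jr)[2js]$ for every $j\le i$. Naturality of the morphisms $a$ and $b$ in the definition of $S^i$ then yields morphisms of distinguished triangles from (\ref{seq1}) and (\ref{seq2}) for $M'$ to the $(ir)[2is]$-twists of the corresponding triangles for $M$. A direct computation using $y(r)[2s]\circ f=\gamma\circ y'$ and $x(r)[2s]=f\circ x'$ identifies the four verticals: for (\ref{seq2}) they are $S^{i-1}(f)(p+r)[2(q+s)]$ on the left, $S^i(f)$ in the middle and $\gamma^i$ on the right; for (\ref{seq1}) they are identity on the left, $S^i(f)$ in the middle and $S^{i-1}(f)\oo\gamma$ on the right. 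The extra factor of $\gamma$ in this last vertical arises from $u(ir)[2is]\circ S^i(f)=(Id\oo y(r)[2s])\circ(S^{i-1}(f)\oo f)\circ a=(S^{i-1}(f)\oo\gamma)\circ u'$.

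Uniqueness of $r,s$ in Lemma \ref{mainseq} (forced by the Hom-vanishing invoked via \refunext in the proof of that lemma) forces the boundary squares of these morphisms of triangles to commute, giving
\[s(ir)[2is]\circ\gamma^i=(S^{i-1}(f)\oo Id_{R(p+r)[2(q+s)+1]})\circ s'\qquad\text{and}\qquad r'=r(ir)[2is]\circ(S^{i-1}(f)\oo\gamma).\]
Substituting these into the definition $\phi_i(\gamma\sigma)=(r'\oo Id_{R(p+r)[2(q+s)+1]})\circ s'$ and rearranging by associativity and symmetry of the tensor product (no Koszul signs intervene because all relevant shifts are even), the two copies of $S^{i-1}(f)$ telescope to leave $\phi_i(\sigma)(ir)[2is]\circ\gamma^i$ multiplied externally by the one extra $\gamma$ from the $r'$-formula, which equals $\gamma^{i+1}\phi_i(\sigma)$ after the evident Tate-twist identifications.

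The main obstacle is the bookkeeping of Tate twists across the non-additive functor $S^i$: verifying that the right vertical in the (\ref{seq1})-triangle morphism is $S^{i-1}(f)\oo\gamma$ rather than simply $S^{i-1}(f)$ is what supplies the crucial extra $\gamma$ upgrading $\gamma^i$ to $\gamma^{i+1}$, and matching the various domains and codomains after re-associating tensor products requires care.
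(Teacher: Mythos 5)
Your argument is correct but takes a genuinely different route. The paper applies the octahedral axiom to the two factorizations of $\alpha=\gamma\sigma$ to produce comparison maps $f\colon M_\sigma\sr M_\alpha$ and $g\colon M_\alpha\sr M_\sigma\{n\}$, and then invokes May's axiom TC3 to arrange $g\circ f=Id\oo\gamma$; the final step requires $S^{i-1}(g)\circ S^{i-1}(f)=Id\oo\gamma^{i-1}$. You use only a single comparison map $f\colon M'\sr M(r)[2s]$ coming from ordinary TR3 (this is the paper's $g$), transport the triangles (\ref{seq1}) and (\ref{seq2}) for $M'$ along $f$ via naturality of $a$ and $b$, and then substitute the resulting formulas for $r'$ and $s'$ into $\phi_i(\gamma\sigma)$; the single $S^{i-1}(f)$ produced by the $r'$-formula is consumed directly by the $s'$-formula, so no composite $S^{i-1}(g)\circ S^{i-1}(f)$ ever needs to be evaluated, and May's TC3 --- a nonstandard strengthening of the octahedral axiom --- is not used at all. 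This is a genuine streamlining. The one point to state with more care is the commutativity of the boundary square: uniqueness of $r'$ does not by itself force it, but the standard argument works --- TR3 supplies some third vertical $h$, the two inner squares force $\bigl(h-(S^{i-1}(f)\oo\gamma)\bigr)\circ u'=0$, so the difference factors through $r'$, and the Hom-vanishing between Tate twists of the relevant weights (the same vanishing underlying \refunext and the uniqueness in Lemma \ref{mainseq}) kills it. The paper accomplishes its analogous identification by checking that one square commutes, invoking TR3, and then pinning down the third vertical with the slice functor.
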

\begin{proof}
Set $\alpha=\gamma\sigma$. For simplicity of notations we will write
$\{n\}$ instead of $(r)[2s]$ and $\{m\}$ instead of $(p)[2q+1]$. For
example $X\{i(n+m)\}$ is $X(i(r+p))[i(2s+2q+1)]$. 

Let $M_{\gamma}$ and $M_{\sigma}$ be objects defined
(up to an isomorphism) by distinguished triangles
$${{R}}\{n\}[-1]\sr M_{\gamma}\sr {{R}}\stackrel{\gamma}{\sr}{{R}}\{n\}$$
$${{R}}\{m\}[-1]\sr M_{\sigma}\sr {{R}}\stackrel{\sigma}{\sr}{{R}}\{m\}$$
The octahedral axiom applied to the representation of $\alpha$ as
compositions
$${{R}}\stackrel{\gamma}{\sr}{{R}}\{n\}\stackrel{\sigma\{m\}}{\longrightarrow}
{{R}}\{n+m\}$$
and
$${{R}}\stackrel{\sigma}{\sr}{{R}}\{m\}\stackrel{\gamma\{n\}}{\longrightarrow}
{{R}}\{n+m\}$$
shows that there are morphisms
$$f:M_{\sigma}\sr M_{\alpha}$$
$$g:M_{\alpha}\sr M_{\sigma}\{n\}$$
which fit into morphisms of distinguished triangles of the form
\begin{eq}
\llabel{diag1}
\begin{CD}
{{R}}\{m\}[-1] @>>> M_{\sigma} @>>> {{R}} @>\sigma>>{{R}}\{m\}\\
@V\gamma\{m\}[-1]VV @VfVV @VIdVV @V\gamma\{m\}VV\\
{{R}}\{m+n\}[-1] @>>> M_{\alpha} @>>> {{R}} @>\alpha>>{{R}}\{m+n\}
\end{CD}
\end{eq}
\begin{eq}
\llabel{diag2}
\begin{CD}
{{R}}\{m+n\}[-1] @>>> M_{\alpha} @>>> {{R}} @>\alpha>>{{R}}\{m+n\}\\
@VIdVV @VgVV @V\gamma VV @VIdVV\\ 
{{R}}\{m+n\}[-1] @>>> M_{\sigma}\{n\}
@>>> {{R}}\{n\} @>\sigma\{n\}>>{{R}}\{m+n\}
\end{CD}
\end{eq}
Applying May's axiom \cite[Axiom TC3]{MayTT} to these two triangles we
conclude that morphisms $f$ and $g$ can be chosen in such a way that 
\begin{eq}
\llabel{compose}
g\circ f=Id\oo \gamma
\end{eq}
Consider now the diagrams
$$
\begin{CD}
S^{i}(M_{\sigma}) @>>> {{R}} @>>>
S^{i-1}(M_{\sigma})\{m\} @>>> S^{i}(M_{\sigma})[1]\\
@VS^i(f)VV @VIdVV
@VS^{i-1}(f)\oo \gamma \{m\}VV @VS^i(f)[1]VV\\
S^{i}(M_{\alpha}) @>>> {{R}} @>>>
S^{i-1}(M_{\alpha})\{n+m\} @>>> S^{i}(M_{\alpha})[1]\\
\end{CD}
$$
and
$$
\begin{CD}
S^i(M_{\alpha}) @>>> S^{i-1}(M_{\alpha}) @>>>
{{R}}\{i(m+n)\}[1-i] @>>> \dots\\
@VS^i(g)VV @VS^{i-1}(g)\oo \gamma VV @VIdVV @VS^i(g)[1]VV\\
S^i(M_{\sigma})\{in\} @>>>
S^{i-1}(M_{\sigma})\{in\}  @>>>
{{R}}\{i(m+n)\}[1-i] @>>> \dots
\end{CD}
$$
Where:
\begin{enumerate}
\item the upper row in the first diagram is (\ref{seq2}) for
$M_{\sigma}$
\item the lower row in the first diagram is (\ref{seq2}) for
$M_{\alpha}$
\item the upper row in the second disgram is (\ref{seq1}) for
$M_{\alpha}$
\item the lower row in the second diagram is (\ref{seq1}) for
$M_{\sigma}$ twisted by $\{in\}$
\end{enumerate}
Let us show that these diagrams commute. The commutativity of the right
square in the first diagram is an immediatel corollary of the
commutativity of the left square in (\ref{diag1}). Since both rows are
dsitinguished triangles we conclude that there is a morphism
$$\psi:{{R}}\sr{{R}}$$
which makes two other squares commute. Applying the slice functor we
conclude that the commutativity of the left square implies that
$\psi=1$.

The commutativity of the left square in the second diagram is an
immediatel corollary of the commutativity of the middle square in
(\ref{diag2}). Since both rows are dsitinguished triangles we conclude
that there is a morphism 
$$\psi:{{R}}\{i(m+n)\}[-i]\sr{{R}}\{i(m+n)\}[-i]$$
which makes two other squares commute. Applying the slice functor we
conclude that the commutativity of the middle square implies that
$\psi=1$.

We see now that $\phi_i(\alpha)$ is the composition:
$$
\begin{CD}
{{R}} @>(1)>> S^{i-1}(M_{\sigma})\{m\}\\
@.  @VVS^{i-1}(f)\{m\}V\\
@. S^{i-1}(M_{\alpha})\{n+m\}\\
@. @VVS^{i-1}(g)\oo\gamma\{m+n\}V\\
@. S^{i-1}(M_{\sigma})\{(i+1)n+m\} @>(2)\{(i+1)n+m\}>>
{{R}}\{(i+1)(m+n)\}[1-i]
\end{CD}
$$
We further have by definition
$$\phi_i(\sigma)=(2)\circ (1)$$
and by (\ref{compose}) we have 
$$S^{i-1}(g)\circ S^{i-1}(f)=S^{i-1}(g\circ f)=Id\oo
S^{i-1}(\gamma)=Id\oo \gamma^{i-1}$$
Taking the composition we get
$$\phi_i(\alpha)=\gamma^{i+1}\phi_i(\sigma)$$
\end{proof}
\begin{cor}
\llabel{scalar}
For any $\alpha:R\sr R(p)[2q+1]$ and any $c\in\zz$ one has
$$\phi_{i}(c\alpha)=c^{i+1}\phi_i(\alpha)$$
\end{cor}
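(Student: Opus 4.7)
The plan is to obtain Corollary \ref{scalar} as an immediate specialization of Proposition \ref{prod}. Proposition \ref{prod} handles composition $\gamma\sigma$ where $\gamma: R \to R(r)[2s]$ is any morphism of the appropriate shape and $\sigma: R \to R(p)[2q+1]$. The key observation is that the case where $\gamma$ is multiplication by an integer $c$ is exactly the ``scalar case'' $r = s = 0$.

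Concretely, I would set $\gamma = c \cdot \mathrm{Id}_R : R \to R$, so that $(r,s) = (0,0)$, and take $\sigma = \alpha$. Then the composition $\gamma \sigma$ is literally the morphism $c\alpha : R \to R(p)[2q+1]$. Applying Proposition \ref{prod} gives
\[
\phi_i(c\alpha) \;=\; \phi_i(\gamma\sigma) \;=\; \gamma^{i+1}\,\phi_i(\sigma) \;=\; c^{i+1}\,\phi_i(\alpha),
\]
where $\gamma^{i+1} = c^{i+1}$ as an endomorphism of $R$ (and hence acts by multiplication by $c^{i+1}$ on any Hom-group out of $R$).

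One small point to verify is that Proposition \ref{prod} was stated with $\gamma$ of the form $R \to R(r)[2s]$ without any positivity assumption on $(r,s)$, so the degenerate case $r = s = 0$ is permitted; a glance at the proof of Proposition \ref{prod} confirms that nothing in it used $(r,s) \neq (0,0)$. I expect no genuine obstacle here: the corollary is a trivial specialization, and no separate argument is needed.
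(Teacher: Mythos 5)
Your proposal is correct and is precisely the intended argument: the paper states Corollary \ref{scalar} immediately after Proposition \ref{prod} with no separate proof, because it is the specialization $\gamma = c\cdot\mathrm{Id}_R$ (so $(r,s)=(0,0)$) of that proposition, exactly as you describe. Your remark that nothing in the proof of Proposition \ref{prod} requires $(r,s)\neq(0,0)$ is accurate, and the identification of $\gamma^{i+1}$ with multiplication by $c^{i+1}$ on $\mathrm{Hom}$-groups out of $R$ is the only point that needs to be noted.
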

Since operations $\phi_i$ are natural in $\cal X$ we can extend them
to reduced motivic cohomology groups of pointed simplicial schemes in
the usual way. We can further extend then to the reduced motivic
cohomology of pointed simplicial sheaves using the fact that any
simplicial sheaf has a weakly equivalent replacement by a smooth
simplicial scheme.
\begin{cor}
\llabel{susphi}
Let $\alpha$ be a class in $\tilde{H}^{2q,p}({\cal X},\zz/l)$. Then
$$\phi_i(\sigma_s\alpha)=0$$
\end{cor}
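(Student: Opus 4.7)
The plan is to factor $\sigma_s\alpha$ as a cup product of $\alpha$ with the canonical suspension class on $S^1_s$, apply Proposition \ref{prod}, and reduce the question to a dimension vanishing. Let $\epsilon \in \tilde{H}^{1,0}(S^1_s, \zz/l)$ denote the canonical generator. Via the K\"unneth isomorphism $\tilde{H}^{*,*}(S^1_s\wedge \mathcal{X}, \zz/l) \cong \tilde{H}^{*,*}(S^1_s, \zz/l) \otimes \tilde{H}^{*,*}(\mathcal{X}, \zz/l)$, the suspension $\sigma_s\alpha$ is identified with the external product $\epsilon\otimes\alpha$. To apply Proposition \ref{prod} I would pass to $S^1_s\times \mathcal{X}$, whose cohomology contains $\tilde{H}^{*,*}(S^1_s\wedge\mathcal{X}, \zz/l)$ as a direct summand; writing $p_1,p_2$ for the projections and setting $\tilde\epsilon = p_1^*\epsilon$, $\tilde\alpha = p_2^*\alpha$, the class $\sigma_s\alpha$ corresponds to the cup product $\tilde\alpha\cdot \tilde\epsilon$.

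Now applying Proposition \ref{prod} with $\gamma = \tilde\alpha$ (even cohomological degree, since $\alpha\in\tilde{H}^{2q,p}$) and $\sigma = \tilde\epsilon$ (cohomological degree $1$) yields
$$\phi_i(\tilde\alpha\cdot\tilde\epsilon) = \tilde\alpha^{i+1}\,\phi_i(\tilde\epsilon).$$
By Lemma \ref{phinat}, $\phi_i(\tilde\epsilon) = p_1^*\phi_i(\epsilon)$. The class $\phi_i(\epsilon)$ lies in $\tilde H^{2,0}(S^1_s, \zz/l)$, and the weight-zero motivic cohomology of $S^1_s$ coincides with the reduced $\zz/l$-cohomology of the simplicial circle, which is concentrated in degree $1$. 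Hence $\phi_i(\epsilon) = 0$, and consequently $\phi_i(\sigma_s\alpha)=0$.

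The only delicate point is the bookkeeping between reduced (smash) and unreduced (product) cohomology needed to stage the application of Proposition \ref{prod}; this is routine since $\tilde H^{*,*}(S^1_s\wedge\mathcal{X})$ appears canonically as a summand in $H^{*,*}(S^1_s\times\mathcal{X})$, and I do not foresee any substantive obstacle.
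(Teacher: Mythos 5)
Your proof is correct and follows essentially the same route as the paper: pull back along $(S^1_s\times\mathcal X)_+\to\Sigma^1_s\mathcal X$ to write $\sigma_s\alpha$ as a product of $\alpha$ with the canonical class $\sigma\in\tilde H^{1,0}(S^1_s,\zz/l)$, apply Proposition~\ref{prod} to get $\phi_i(\sigma)\alpha^{i+1}$, and conclude by observing $\phi_i(\sigma)\in\tilde H^{2,0}(S^1_s,\zz/l)=0$. Your extra bookkeeping about which factor plays the role of $\gamma$ versus $\sigma$ in Proposition~\ref{prod} is a harmless elaboration of the same argument.
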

\begin{proof}
The pull-back of $\sigma_s\alpha$ with respect to the projection
$$(S^1_s\times{\cal X})_+\sr \Sigma^1_s{\cal X}$$
is the class $\sigma\wedge \alpha$ where $\sigma$ is the canonical
class in $H^{1,0}(S^1_s,\zz/l)$. Since the restriction homomorphism is
a monomorphism it is enough to show that $\phi_{i}(\sigma\wedge
\alpha)=0$. By Proposition \ref{prod} we have
$$\phi_{i}(\sigma\wedge \alpha)=\phi_{i}(\sigma)\wedge \alpha^{i+1}$$
The class $\phi_{i}(\sigma)$ lies in the group $H^{2,0}(S^1_s)=0$
which proves the corollary.
\end{proof}
\begin{lemma}
\llabel{isnotzero}
For any $n\ge 0$ there exists $\cal X$ and $\alpha\in H^{2n+1,n}({\cal
X},\zz/l)$ such that $\phi_{l-1}(\alpha)\ne 0$.
\end{lemma}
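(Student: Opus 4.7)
The plan is to invoke the uniqueness result of Theorem \ref{maincomp2} to reduce the non-vanishing of $\phi_{l-1}$ to that of the motivic Steenrod operation $\beta P^n$ of the same bidegree, and then to detect $\beta P^n$ on an explicit product example.

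First I would verify that $\beta P^n$ sends $\tilde{H}^{2n+1,n}(-,\zz/l)$ to $\tilde{H}^{2nl+2,nl}(-,\zz/l)$ and satisfies both hypotheses of Theorem \ref{maincomp2}. Condition (1) is $\zz/l$-linearity of the Steenrod operations. Condition (2) follows from stability of Steenrod operations with respect to $\sigma_s$ together with the identity $P^n\alpha=\alpha^l$ for $\alpha\in H^{2n,n}$ and the fact that $\beta$ kills $l$-th powers. The corresponding hypotheses for $\phi_{l-1}$ are Corollary \ref{scalar} (using $c^l\equiv c\pmod{l}$) and Corollary \ref{susphi}. Theorem \ref{maincomp2} then supplies a scalar $c\in\zz/l$ with $\beta P^n=c\cdot\phi_{l-1}$, reducing the lemma to showing that $\beta P^n$ is not identically zero as an operation on this domain.

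Next, I would work over a field $k$ of characteristic prime to $l$ containing $\mu_l$, take $\cal Y$ to be a pointed smooth simplicial-scheme model of $B\mu_l$ (for instance the \v Cech simplicial scheme of the degree-$l$ \'etale cyclic cover $\mathrm{Spec}(k[t]/(t^l-a))\sr\mathrm{Spec}(k)$ for a non-$l$-th-power $a\in k^*$), carrying the canonical class $\sigma\in H^{1,0}(\cal Y,\zz/l)$ whose Bockstein $\beta(\sigma)\in H^{2,0}(\cal Y,\zz/l)$ is the standard nonzero generator. Fix $N\ge nl$ and let $h\in H^{2,1}(\mathbf{P}^N,\zz/l)$ be the hyperplane class. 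Set $\cal X=(\mathbf{P}^N)_+\wedge\cal Y$ and $\alpha=h^n\cdot\sigma\in H^{2n+1,n}(\cal X,\zz/l)$. A Cartan-formula computation, using the instability vanishing $P^j(\sigma)=0$ for $j\ge 1$ (because the motivic weight of $\sigma$ is $0$) and the top-power relation $P^n(h^n)=h^{nl}$, yields $P^n(h^n\sigma)=h^{nl}\sigma$. Since $h$ lifts to integral motivic cohomology, $\beta(h^{nl})=0$, and the Leibniz rule for $\beta$ gives $\beta P^n(\alpha)=h^{nl}\cdot\beta(\sigma)$. The K\"unneth isomorphism, valid because $M(\mathbf{P}^N)$ is a sum of Tate motives, identifies this with the external product of the nonzero classes $h^{nl}\in H^{2nl,nl}(\mathbf{P}^N,\zz/l)$ and $\beta(\sigma)\in H^{2,0}(\cal Y,\zz/l)$, so $\beta P^n(\alpha)\ne 0$.

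The main obstacle is the careful verification of the required properties of the motivic Steenrod algebra (the Cartan formula, the instability $P^j(\sigma)=0$ for $j>w(\sigma)$, the Bockstein--Leibniz rule, and the stability of $\beta P^n$ under simplicial suspension) on weight-zero classes such as $\sigma$, together with identifying $\cal Y$ as a smooth simplicial model of $B\mu_l$ on which $\beta(\sigma)$ is nontrivial. Each of these is a standard consequence of the development of motivic Steenrod operations in \cite{Redpub} and of the computations carried out in \cite{Red2sub}, but they must be applied with some care, because the bidegree of $\sigma$ lies outside the range $d\ge 2w$ used in several of the earlier lemmas of this section.
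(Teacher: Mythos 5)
Your plan is circular, and this is a genuine gap, not a detail that can be patched. The content actually established by the proof of Theorem~\ref{maincomp2} (via Proposition~\ref{ker2}) is that the kernel of the map~(\ref{mori2}) is generated by a single element, i.e.\ the $\zz/l$-vector space of operations satisfying the two hypotheses has dimension at most one. Combined with the fact that $\beta P^n$ is a nonzero element of this space, this only tells you that every eligible operation is of the form $c\,\beta P^n$ for some $c\in\zz/l$. It does \emph{not} preclude $c=0$. In particular, it is perfectly consistent with everything you have said that $\phi_{l-1}$ is the zero operation. The intended (and correct) logical order is the opposite: one shows separately that $\beta P^n\neq 0$ (\cite[Cor.\ 11.5]{Redpub}) and that $\phi_{l-1}\neq 0$ (Lemma~\ref{isnotzero}), and only then does the one-dimensionality force the constant to be a unit. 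Attempting to deduce $\phi_{l-1}\neq 0$ from the one-dimensionality and $\beta P^n\neq 0$ puts the cart before the horse; indeed, Theorem~\ref{maincomp} in the paper explicitly cites Lemma~\ref{isnotzero} precisely to rule out $c=0$. (The literal phrasing ``there exists $c$ with $\phi_1=c\phi_2$'' in Theorem~\ref{maincomp2} should not be taken at face value when $\phi_2$ might be zero: the pair $(\beta P^n, 0)$ satisfies all the hypotheses but no such $c$ exists, so the honest conclusion of the proof is the dimension bound, not the asymmetric proportionality.)

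The paper's own proof works directly with the definition of $\phi_{l-1}$ and needs no input from Theorem~\ref{maincomp2}. Using Proposition~\ref{prod} one reduces to $n=0$, then takes $\alpha$ to be the generator of $H^{1,0}(K(\zz/l,1),\zz/l)\cong\zz/l$ represented by the extension $0\to\zz/l\to M\to\zz/l\to 0$ coming from the standard $2$-dimensional $\zz/l[\zz/l]$-module. One computes $S^{l-1}(M)\cong\zz/l[\zz/l]$ (the regular representation) and identifies $\phi_{l-1}(\alpha)$ with the class of the $2$-fold extension $0\to\zz/l\to\zz/l[\zz/l]\to\zz/l[\zz/l]\to\zz/l\to 0$, where the middle map is multiplication by a generator of $\zz/l$. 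Projectivity of $\zz/l[\zz/l]$ shows the associated connecting map $H^0(\zz/l,\zz/l)\to H^2(\zz/l,\zz/l)$ is surjective, hence nonzero, so $\phi_{l-1}(\alpha)\neq 0$. Your explicit computation showing $\beta P^n\neq 0$ is correct and standard, but it establishes a fact the paper already cites rather than the lemma at hand. If you want a self-contained argument, you should compute $\phi_{l-1}$ itself on a concrete class, not $\beta P^n$.
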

\begin{proof}
To show that there exists $\alpha\in H^{2n+1,n}$
such that $\phi_{l-1}(\alpha)\ne 0$ it is sufficient in view of Proposition
\ref{prod} to show that there exists $\alpha\in H^{1,0}$ such that
$\phi_{l-1}(\alpha)\ne 0$ and then consider $\alpha\gamma$ for an
appropriate $\gamma$ i.e. we may assume that $n=0$. In this case one
can take $\alpha$ to be a generator of 
$$H^{1,0}(K(\zz/l,1),\zz/l)=\zz/l$$
this generator is represented by the canonical extension 
$$0\sr \zz/l\sr M\sr \zz/l\sr 0$$
which corresponds to the standard 2-dimensional representation of
$\zz/l$ over $\zz/l$. The symmetric power $S^{l-1}(M)$ is given by the
regular representation $\zz/l[\zz/l]$ of $\zz/l$ over $\zz/l$ and
$\phi_{l-1}(\alpha)$ is the second extension represented by the exact
sequence
\begin{eq}
\llabel{ext2}
0\sr \zz/l\sr \zz/l[\zz/l]\stackrel{g}{\sr} \zz/l[\zz/l]\sr \zz/l\sr
0
\end{eq}
where the middle arrow is the multiplication by the generator of
$\zz/l$. Let $K$ be the complex given by the middle two terms of
(\ref{ext2}) with the last one placed in degree $0$. Then we have a
distinguished triangle
\begin{eq}
\llabel{trext2}
\zz/l[1]\sr K\sr \zz/l\stackrel{\phi_{l-1}(\alpha)}{\sr}\zz/l[2]
\end{eq}
Since $\zz/l[\zz/l]$ is a projective $\zz/l$-module we have
$$Hom(K,\zz/l[2])=0$$
where the morphisms are in the derived category. From the long exact
sequence associated with (\ref{trext2}) we conclude that the map
\begin{eq}
\llabel{issur}
H^0(\zz/l,\zz/l)\sr H^{2}(\zz/l,\zz/l)
\end{eq}
defined by $\phi_{l-1}(\alpha)$ is surjective. Since the right hand
side of (\ref{issur}) is not zero we conclude that
$\phi_{l-1}(\alpha)\ne 0$.
\end{proof}
\begin{theorem}
\llabel{maincomp}
Let $\alpha\in \tilde{H}^{2n+1,n}({\cal X},\zz/l)$ be a motivic cohomology
class. Then there exists $c\in (\zz/l)^*$ such that
\begin{eq}
\llabel{neednow}
\phi_{l-1}(\alpha)=c\beta P^{n}(\alpha)
\end{eq}
where $\beta$ is the Bockstein homomorphism and $P^n$ is the motivic
reduced power operation. 
\end{theorem}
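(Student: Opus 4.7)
The plan is to apply the uniqueness Theorem~\ref{maincomp2} to the two operations $\phi_1 := \phi_{l-1}$ and $\phi_2 := \beta P^n$, and then rule out the possibility $c = 0$ by means of Lemma~\ref{isnotzero}. Both sides of the asserted identity (\ref{neednow}) are defined on $\tilde{H}^{2n+1,n}$, so the first thing to confirm is that they land in the same bidegree. For $\beta P^n$, the Steenrod operation $P^n$ sends $\tilde{H}^{2n+1,n}$ into $\tilde{H}^{2nl+1,nl}$ and the Bockstein shifts the total degree by one, giving $\tilde{H}^{2nl+2,nl}$. For $\phi_{l-1}$, the construction of Section~\ref{s2} produces, for $\alpha:R\sr R(n)[2n+1]$, a morphism $\phi_{l-1}(\alpha):R\sr R(ln)[2ln+2]$, i.e. an element of the same group $\tilde{H}^{2nl+2,nl}$.

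Next I would verify the two hypotheses of Theorem~\ref{maincomp2} for each $\phi_i$. Condition (1): for $b \in \zz/l$, Corollary~\ref{scalar} gives $\phi_{l-1}(b\alpha) = b^l \phi_{l-1}(\alpha) = b\,\phi_{l-1}(\alpha)$ using Fermat's little theorem, while $\beta P^n$ is additive so manifestly satisfies the same scaling. Condition (2), vanishing on simplicial suspensions: for $\phi_{l-1}$ this is exactly Corollary~\ref{susphi}. For $\beta P^n$, I would use naturality of $P^n$ and $\beta$ with respect to the simplicial suspension, together with the instability property of the motivic reduced powers that $P^n$ acts as the $l$-th power on a class of motivic weight $n$: for $\alpha \in \tilde H^{2n,n}$ this gives
\[
\beta P^n(\sigma_s\alpha) \;=\; \sigma_s\bigl(\beta P^n(\alpha)\bigr) \;=\; \sigma_s\bigl(\beta(\alpha^l)\bigr) \;=\; \sigma_s\bigl(l\,\alpha^{l-1}\beta\alpha\bigr) \;=\; 0
\]
in $\zz/l$-coefficients.

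With both hypotheses verified, Theorem~\ref{maincomp2} produces a scalar $c \in \zz/l$ with $\phi_{l-1} = c\,\beta P^n$. It remains to show $c \ne 0$, so that $c \in (\zz/l)^{*}$. This is immediate from Lemma~\ref{isnotzero}: there exists $\mathcal{X}$ and $\alpha \in H^{2n+1,n}(\mathcal{X},\zz/l)$ with $\phi_{l-1}(\alpha)\ne 0$, and if $c$ were zero the left-hand side of (\ref{neednow}) would vanish identically, contradicting this.

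The only step I expect to require real care is the verification that $\beta P^n$ kills simplicial suspensions, since this depends on the top-power identity $P^n|_{H^{2n,n}} = (-)^l$ from the unstable theory of the motivic Steenrod algebra of \cite{Redpub}. Everything else is a matching of bidegrees and a bookkeeping application of the two main inputs: the uniqueness statement (Theorem~\ref{maincomp2}) and the nonvanishing statement (Lemma~\ref{isnotzero}).
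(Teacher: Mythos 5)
Your proof is correct and takes essentially the same approach as the paper: apply Theorem~\ref{maincomp2} with $\phi_1 = \phi_{l-1}$ and $\phi_2 = \beta P^n$, check the two hypotheses via Corollaries~\ref{scalar},~\ref{susphi} and the properties of the motivic Steenrod operations, and then rule out $c=0$ using Lemma~\ref{isnotzero}. The paper replaces your Leibniz-rule argument for $\beta(\alpha^l)=0$ with a citation to \cite[Eq.~(8.1)]{Redpub}, and also cites \cite[Cor.~11.5]{Redpub} for $\beta P^n \neq 0$, a fact your argument recovers implicitly from $\phi_{l-1}\ne 0$ and $c\ne 0$.
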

\begin{proof}
The operation $\phi_{l-1}$ satisfies the conditions of Theorem
\ref{maincomp2} by Lemma \ref{phinat}, Corollary \ref{scalar} and
Corollary \ref{susphi}. The operation $\beta P^n$ satisfies the first
condition of Theorem \ref{maincomp2} because the motivic Steenrod
operations are additive. It satisfies the second condition since for
$\alpha\in H^{2n,n}$ one has
$$\beta P^n(\sigma_s\alpha)=\sigma_s\beta P^n(\alpha)=\sigma_s\beta \alpha^l=0$$
where the first equality follows from \cite[Lemma 9.2]{Redpub}, the
second equality from \cite[Lemma 9.8]{Redpub} and the third from
\cite[Eq. (8.1)]{Redpub}. We conclude that (\ref{neednow}) holds for
$c\in\zz/l$. Since $\beta P^n\ne 0$ by \cite[Cor. 11.5]{Redpub} and
$\phi_{l-1}\ne 0$ by Lemma \ref{isnotzero} we conclude that $c\ne 0$.
\end{proof}
\subsection{Motivic degree theorem}
\llabel{s3}
In this section we fix a prime $l$ and unless the opposite is
explicitly specified we always assume that all other primes are
invertible in the coefficient ring. In particular $\zz$ always means
$\zz_{(l)}$ - the localization of $\zz$ in $l$.

Recall from \cite{MCpub} that we let $s_d(X)$ denote the d-th Milnor
class of a smooth variety $X$. This class lies in $H^{2d,d}(X,\zz)$
and if $dim(X)=d$ one may consider the number $deg(s_d(X))$.  We say
that a smooth projective variety $X$ is a $\nu_{n}$-variety if
$dim(X)=l^{n}-1$ and
$$deg(s_{l^{n}-1}(X))\ne 0 (mod\,l^2)$$
In \cite{MCpub} we constructed for any smooth projective variety $X$ a
stable normal bundle $V$ on $X$ and a morphism 
\begin{eq}\llabel{tau}
\tau:T^N\sr Th_X(V)
\end{eq}
in the pointed $\af$-homotopy category which defines the degree map on
the motivic cohomology. Consider the cofibration sequence
\begin{eq}
\llabel{cofseq} T^{N}\stackrel{\tau}{\sr} Th_{X}(V)\stackrel{p}{\sr}
Th_X(V)/T^N\stackrel{\partial}{\sr} \Sigma^1_s T^N
\end{eq}
For $d=dim(X)>0$ the Thom class 
$$t\in \tilde{H}^{2N-2d,N-d}(Th_{X}(V),\zz)$$
restricts to zero on $T^N$ for the weight reasons and there exists a
unique class
$$\tilde{t}\in \tilde{H}^{2N-2d,N-d}(Th_{X}(V)/T^N,\zz)$$
such that $p^*(\tilde{t})=t$. On the other hand the pull-back of the
tautological class in $H^{2N+1,N}(\Sigma^1_s T^N,\zz)$ with respect to
$\partial$ defines a class
$$v\in \tilde{H}^{2N+1,N}(Th_{X}(V)/T^N,\zz)$$
\begin{lemma}
\llabel{toplemma} Let $X$ be a smooth projective variety of dimension
$d=l^n-1$ where $n>0$. Then one has
\begin{eq}
\llabel{eq7}
Q_{n}(\tilde{t})=(deg(s_{l^n-1}(X))/l)v\,\, mod\,\, l
\end{eq}
\end{lemma}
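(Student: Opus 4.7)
Plan:

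The approach is to use the cofibration sequence (\ref{cofseq}) together with a Thom-isomorphism vanishing to trap $Q_n(\tilde t)$ in the image of the coboundary, and then pin down the precise coefficient via an integral Wu-type computation.

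First I would show that $Q_n(t) = 0$ in $\tilde H^{2N+1,N}(Th_X(V),\zz/l)$. By naturality, $p^*(Q_n(\tilde t)) = Q_n(p^*\tilde t) = Q_n(t)$. Under the Thom isomorphism $\tilde H^{*,*}(Th_X(V),\zz/l) \cong H^{*-2(N-d),*-(N-d)}(X,\zz/l)$, the target group $\tilde H^{2N+1,N}(Th_X(V),\zz/l)$ identifies with $H^{2d+1,d}(X,\zz/l)$. For a smooth variety $H^{p,q}(X,\zz/l) = CH^q(X,2q-p;\zz/l) = 0$ whenever $p > 2q$; with $p = 2d+1$, $q = d$ this vanishes. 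The long exact sequence of (\ref{cofseq}) then shows that the kernel of $p^*$ on $\tilde H^{2N+1,N}(Th_X(V)/T^N,\zz/l)$ equals the image of the coboundary $\partial^*:\tilde H^{2N,N}(T^N,\zz/l) \to \tilde H^{2N+1,N}(Th_X(V)/T^N,\zz/l)$. By the construction of $v$, its mod-$l$ reduction is $\partial^*$ applied to the generator of $\tilde H^{2N,N}(T^N,\zz/l) = \zz/l$. Hence $Q_n(\tilde t) \equiv k\cdot v \pmod l$ for some $k \in \zz/l$. (If the degree map $CH^d(X)/l \to \zz/l$ happens to be surjective then $v \equiv 0$ and both sides of the claim vanish, so the lemma is trivially true in this degenerate case.)

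To identify $k$ with $\deg s_{l^n-1}(X)/l \bmod l$ I would pass to $\zz_{(l)}$-coefficients and apply the motivic Wu formula for the action of $Q_n$ on Thom classes. Although $Q_n$ itself is only defined mod $l$, its value on an integrally-lifted class such as $\tilde t$ admits a canonical lift to $\zz_{(l)}$-cohomology via the Bockstein of $0 \to \zz_{(l)} \xrightarrow{l} \zz_{(l)} \to \zz/l \to 0$. This lift corresponds under the Thom isomorphism to the characteristic class $s_{l^n-1}(V)\cdot t$ on $Th_X(V)$; pushing forward along $\tau$ and using $s_k(V) + s_k(TX) = 0$ for $k \ge 1$ (since $V \oplus TX$ is stably trivial) produces the integer $\pm\deg s_{l^n-1}(X)$, which is divisible by $l$ precisely because $Q_n(t) = 0$ in mod-$l$ cohomology. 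Dividing by $l$ and reducing mod $l$ recovers $k$, up to a convention-dependent sign absorbed into the definition of $v$ or $s_{l^n-1}$.

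The main obstacle is this integral-lift step: one must rigorously set up the Bockstein lift of $Q_n(\tilde t)$ given that $Q_n$ is a priori a mod-$l$ operation, and verify that its characteristic-class description matches via the motivic Wu formula for Steenrod operations on Thom classes. A conceptually important side-point is that the divisibility $l \mid \deg s_{l^n-1}(X)$, required for the statement to make sense, is not an independent hypothesis but emerges as a consequence of the vanishing $Q_n(t) = 0 \bmod l$ established at the outset.
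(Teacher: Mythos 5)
Your steps 1--2 are correct and consistent with the paper: $Q_n(t)=0$ because the target sits in $H^{2d+1,d}(X,\zz/l)$ which vanishes for $p>2q$, and the long exact sequence of the cofibration (\ref{cofseq}) then forces $Q_n(\tilde t)=k\,v$ for some $k\in\zz/l$. (The paper does not explicitly isolate this observation, but it is a valid rephrasing.) Your side remark that the divisibility $l\mid \deg s_{l^n-1}(X)$ is a consequence of $Q_n(t)=0$, rather than a hypothesis, is also a nice point.

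The genuine gap is in step 3, the identification $k\equiv \deg s_{l^n-1}(X)/l \bmod l$. The sentence ``This lift corresponds under the Thom isomorphism to the characteristic class $s_{l^n-1}(V)\cdot t$ on $Th_X(V)$'' does not typecheck. The integral Bockstein lift of $Q_n(\tilde t)=\beta q_n(\tilde t)$ is a class of bidegree $(2N+1,N)$ living on $Th_X(V)/T^N$, whereas $s_{l^n-1}(V)\cdot t$ has bidegree $(2N,N)$ and lives on $Th_X(V)$. The two are related by the cofibration, but that relation is exactly what has to be tracked carefully, and this is where the content of the lemma lies. Also note a second slip: the relevant Wu formula (\cite[Cor.~14.3]{Redpub}) computes $q_n(t)=s_{l^n-1}(X)\,t$, \emph{not} $Q_n(t)$ (which is zero, as your step 1 shows); one uses $Q_n=\beta q_n\pm q_n\beta$ together with $\beta\tilde t=0$ to reduce to $\beta q_n(\tilde t)$. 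Your phrase ``admits a canonical lift via the Bockstein'' implicitly relies on this decomposition, so it should be stated explicitly.

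The paper closes the gap by transporting the cofibration to $DM$ as the triangle $\zz(d)[2d]\xr{\tau'}M(X)\to \mathrm{cone}(\tau')\xr{v}\zz(d)[2d+1]$ and building a morphism of distinguished triangles into
$\zz/l(d)[2d]\to\zz/l^2(d)[2d]\to\zz/l(d)[2d]\xr{\beta}\zz/l(d)[2d+1]$, with middle vertical arrow $s_{l^n-1}(X):M(X)\to\zz/l^2(d)[2d]$ (this is where the Wu formula enters) and third vertical arrow $q_n(\tilde t)$. Commutativity of the left square encodes $\deg s_{l^n-1}(X)\equiv lc \bmod l^2$ and of the right square encodes $cv=\beta q_n(\tilde t)\bmod l$; combining gives (\ref{eq7}). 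Your $\zz_{(l)}$-coefficient route should in principle yield the same result, but only after an equivalent diagram chase: you must show that the integral Bockstein of $q_n(\tilde t)$ on $Th_X(V)/T^N$ maps, via the comparison with $Th_X(V)$, to the class controlled by the integral lift $s_{l^n-1}(X)\,t$ of $q_n(t)$, and extract the integer $\deg s_{l^n-1}(X)/l$ from the exact ladder of the coefficient sequence. Without this diagram chase, the scalar $k$ is not pinned down, and the proof is incomplete.
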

\begin{proof}
Recall from \cite{Redpub} that $Q_n=\beta q_n\pm q_n\beta$ where $\beta$
is the Bockstein homomorphism. Since $\tilde{t}$ is the reduction of
an integral class $\beta(\tilde{t})=0$ and it is sufficient to show
that 
\begin{eq}
\llabel{eq72}
\beta q_n(\tilde{t})=(deg(s_{l^n-1}(X))/l)v\,\, mod\,\, l
\end{eq}
The image of
(\ref{cofseq}) in $DM$ is an appropriate twist of a sequence of the
form
\begin{eq}
\llabel{motseq} \zz(d)[2d] \stackrel{\tau'}{\sr} M(X)\sr
cone(\tau')\stackrel{v}{\sr} \zz(d)[2d+1]
\end{eq}
By \cite[Cor. 14.3]{Redpub} we have $q_n(t)=s_{l^n-1}(X)t$ and therefore there is
a commutative square in the motivic category of the form
$$
\begin{CD}
M(X) @>>> cone(\tau')\\
@Vs_{l^n-1}(X)VV @VVq_n(\tilde{t})V\\
\zz/l^2(d)[2d] @>>> \zz/l(d)[2d]
\end{CD}
$$
This square extends to a morphism of distinguished triangles
$$
\begin{CD}
\zz(d)[2d] @>\tau'>> M(X) @>>> cone(\tau') @>v>> \zz(d)[2d+1]\\
@VuVV @Vs_{l^n-1}(X)VV @VVq_n(\tilde{t})V @VVuV\\
\zz/l(d)[2d] @>>> \zz/l^2(d)[2d]  @>>>
\zz/l(d)[2d]  @>\beta>> \zz/l(d)[2d+1]
\end{CD}
$$
for some morphism $u$. If $u$ sends $1$ to $c$ then the commutativity
of the left square means that we have
$$deg(s_{l^n-1}(X))=lc\,\,mod\,\, l^2$$
and the commutativity of the right square means that we have 
$$cv=\beta q_n(\tilde{t})\,\, mod\,\, l$$
multiplying the second equality by $l$ and combining with the first
one we get
$$deg(s_{l^n-1}(X))v=l\beta q_n(\tilde{t})\,\, mod\,\, l^2$$
which is equivalent to (\ref{eq72}).
\end{proof}
\begin{remark}\rm
The intermediate statement (\ref{eq72}) of Lemma \ref{toplemma}
actually holds for any motivic Steenrod operation $\phi$ if one
replaces $s_{l^n-1}$ by an appropriate characteristic class $c_{\phi}$
as described in \cite[Th. 14.2]{Redpub}.
\end{remark}
From this point until the end of the section we consider all our
motives with $\zz/l$-coefficients.  In particular ``an embedded
simplicial scheme'' means a simplicial scheme embedded with respect to
$\zz/l$-coefficients. 

Recall that the Milnor operations $Q_i$ have the property that
$Q_i^2=0$ and we define for any pointed simplicial scheme $\cal X$ and
any $i\ge 0$ the motivic Margolis homology
$\widetilde{MH}^{*,*}_i({\cal X},\zz/l)$ of $\cal X$ as homology of
the complex $(\tilde{H}^{*,*}({\cal X},\zz/l),Q_i)$. Our first
application of Lemma \ref{toplemma} is the following result which is a
slight generalization of \cite[Th. 3.2]{MCpub}.
\begin{lemma}
\llabel{margolis} Let $\cal X$ be an embedded (with respect to
 $\zz/l$-coefficients) simplicial scheme such that there exists a
 $\nu_n$-variety $X$ with $M(X,\zz/l)$ in $DM_{\cal X}$. Let further
$$\tilde{\cal X}=cone({\cal X}_+\sr S^0)$$
be the unreduced suspension of $\cal X$. Then 
$$\widetilde{MH}^{*,*}_n(\tilde{\cal X},\zz/l)=0.$$
\end{lemma}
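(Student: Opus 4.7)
The plan is to prove that every $Q_n$-cycle in $\tilde{H}^{*,*}(\tilde{\cal X},\zz/l)$ is a $Q_n$-boundary by constructing the boundary explicitly from Lemma~\ref{toplemma} and the hypothesis $M(X,\zz/l)\in DM_{\cal X}$.

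First, Lemma~\ref{toplemma} applied with $d=l^n-1$ gives a distinguished triangle in $DM(k,\zz/l)$ of the shape
$$\zz/l(d)[2d]\xr{\tau'}M(X,\zz/l)\sr C\xr{v'}\zz/l(d)[2d+1],$$
together with a morphism $\tilde{y}:C\sr\zz/l$ (corresponding to the lifted Thom class $\tilde{t}$) satisfying $Q_n(\tilde{y})=c\cdot v'$ for some unit $c\in(\zz/l)^*$, the $\nu_n$-hypothesis guaranteeing that $\deg(s_{l^n-1}(X))/l$ is nonzero mod~$l$.

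Second, the assumption $M(X,\zz/l)\in DM_{\cal X}$ says that tensoring with $M({\cal X}_+)$ acts as the identity on $M(X)$. Applying $M(X)\oo-$ to the cofibration ${\cal X}_+\sr S^0\sr\tilde{\cal X}$ thus gives
$$M(X,\zz/l)\oo M(\tilde{\cal X})=0.$$
Tensoring the displayed triangle with $M(\tilde{\cal X})$ then forces $v'\oo Id$ to be an isomorphism
$$M(\tilde{\cal X})\oo C\xr{\sim}M(\tilde{\cal X})(d)[2d+1];$$
denote its inverse by $s$.

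Third, given $\alpha\in\tilde{H}^{p,q}(\tilde{\cal X},\zz/l)$ with $Q_n\alpha=0$, set
$$\beta:=c^{-1}(\alpha\oo\tilde{y})\circ s\in\tilde{H}^{p-2d-1,q-d}(\tilde{\cal X},\zz/l),$$
absorbing any Leibniz sign into the normalization. The Leibniz rule for $Q_n$ on external products, combined with $Q_n\alpha=0$ and $Q_n\tilde{y}=cv'$, gives $Q_n(\alpha\oo\tilde{y})=\pm c\,(\alpha\oo v')$. Since $(Id\oo v')\circ s=Id$, the Tate-shift identification turns $(\alpha\oo v')\circ s$ back into $\alpha$, so $Q_n\beta=\pm\alpha$. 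Adjusting the sign of $\beta$ if necessary, $\alpha$ is a $Q_n$-boundary, so Margolis homology vanishes.

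The main technical hurdle is rigorously justifying the absorption $M(X,\zz/l)\oo M({\cal X}_+)\xr{\sim}M(X,\zz/l)$ for objects of $DM_{\cal X}$, which is the defining tensor-idempotence property of motives over an embedded simplicial scheme from \cite{oversub}. The Leibniz rule for $Q_n$ on external products and the identification of $\tilde{y}$ are standard from \cite{Redpub} and Lemma~\ref{toplemma} respectively; once these are in hand, the rest of the argument is purely formal.
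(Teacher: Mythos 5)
Your proof is correct and follows essentially the same route as the paper's: tensor the defining triangle of $cone(\tau')$ with $M(\tilde{\cal X})$, use $M(\tilde{\cal X})\oo M(X,\zz/l)=0$ (which follows from $M(X)\in DM_{\cal X}$ and \refdonotneed) to invert $Id\oo v$, and then apply Lemma~\ref{toplemma} plus the Leibniz rule to produce a $Q_n$-preimage. The paper packages this as a commutator identity $\phi Q_n(x)-Q_n\phi(x)=-(-1)^{deg(x)}(deg(s_{l^n-1}(X))/l)\,x$, valid for every $x$, rather than beginning with a $Q_n$-cycle, but that is the same computation.

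Two small points are worth flagging. First, for $l=2$ the product formula for $Q_n$ acquires extra terms involving $Q_i$ with $i<n$ (see \cite[Prop. 13.4]{Redpub}); the paper discards them by observing that $Q_i(\tilde{t})=0$ for $i<n$ on weight grounds, a step your sketch omits. Second, you commute $Q_n$ past $s=(Id\oo v)^{-1}$, an inverse taken in $DM$ which is not \emph{a priori} the image of a morphism of spaces. The paper sidesteps this by verifying the identity after composing with $v$, which \emph{is} such an image (so compatible with cohomological operations), and then appealing to the injectivity of $(Id\oo v)^*$. Your route is fine if one grants that the Steenrod operations define morphisms in $DM$ and hence commute with arbitrary $DM$-morphisms, but the paper's phrasing avoids having to invoke that.
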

\begin{proof}
Our proof is a version of the proof given in \cite{MCpub}. We will
assume that $n>0$. The case $n=0$ has a similar (easier) proof. We
will use the notations established in the proof of Lemma
\ref{toplemma}. Let $cone(\tau')$ be the motive defined by
(\ref{motseq}). Consider the morphisms in $DM$ with $\zz/l$
coefficients of the form
$$M(\tilde{X})(d)[2d+1]\stackrel{Id\oo v}{\longleftarrow}M(\tilde{X})\oo
cone(\tau')\stackrel{Id\oo \tilde{t}}{\longrightarrow}M(\tilde{X})$$
Since $M(X)$ is in $DM_{\cal X}$, \refdonotneed shows that
$M(\tilde{X})\oo M(X)=0$ and therefore sequence (\ref{motseq}) implies
that the first arrow is an isomorphism. Consider the homomorphism
$$\phi:H^{*,*}(\tilde{X},\zz/l)\sr H^{*-2d-1,*-d}(\tilde{X},\zz/l)$$
defined by $(Id\oo \tilde{t})\circ (Id\oo v)^{-1}$. We claim that for
any motivic cohomology class $x$ of $\tilde{\cal X}$ one has
$$\phi Q_n(x)-Q_n\phi(x)=-(-1)^{deg(x)}s_{l^n-1}(X)x$$
which clearly implies the statement of the lemma. Since $Id\oo v$ is
an isomorphism it is sufficient to check that both sides become the
same after multiplication with $v$. Since $v$ is the image of a
morphism in the homotopy category it commutes with cohomological
operations and we have to check that
\begin{eq}
\llabel{thisneed}
Q_n(x)\wedge \tilde{t}-Q_n(x\wedge
\tilde{t})=-(-1)^{deg(x)}s_{l^n-1}(X)x\wedge v
\end{eq}
For $l>2$ we have
$$Q_n(x\wedge
\tilde{t})=Q_n(x)\wedge \tilde{t}+(-1)^{deg(x)} x\wedge
Q_n(\tilde{t})$$
by \cite[Prop. 13.3]{Redpub} and the same holds for $l=2$ by
\cite[Prop. 13.4]{Redpub} since $Q_i(\tilde{t})=0$ for $i<n$ by weight
reasons. Applying Lemma \ref{toplemma} we further get
$$Q_n(x\wedge
\tilde{t})=Q_n(x)\wedge \tilde{t}+(-1)^{deg(x)} x\wedge
v$$
which implies (\ref{thisneed}).
\end{proof}
Let $\cal X$ be an
embedded simplicial scheme, $n>0$ an integer and $X$ be a
$\nu_n$-variety such that $M(X)=M(X,\zz/l)$ lies in $DM_{\cal
X}(\zz/l)$.

Let $\zz/l_{\cal X}(i)[j]$ denote the Tate motives over $\cal X$ which
we identify with $M({\cal X},\zz/l)(i)[j]$. The image of (\ref{tau}) in
$DM(k,\zz/l)$ is a morphism of the form
$$\zz/l(d)[2d]\sr M(X)$$
and its composition with the morphism $\zz/l_{\cal X}(d)[2d]\sr
\zz/l(d)[2d]$ gives us relative fundamental class
$$\tau_{\cal X}:\zz/l_{\cal X}(d)[2d]\sr M(X)$$
On the other hand \refrl implies that the structure morphism
$\pi:M(X)\sr \zz/l$ is the composition of a unique morphism
$$\pi_{\cal X}:M(X)\sr \zz/l_{\cal X}$$
with the morphism $\zz/l_{\cal X}\sr \zz/l$.
\begin{theorem}
\llabel{degree} Consider a commutative diagram in $DM_{\cal X}(\zz/l)$
of the form
$$
\begin{CD}
M(X,\zz/l) @>s>> N\\
@V\pi_{\cal X}VV @VVrV\\
\zz/l_{\cal X} @>Id>> \zz/l_{\cal X}.
\end{CD}
$$
Assume that there exists a class $\alpha\in H^{p,q}({\cal X},\zz/l)$
such that the following conditions hold:
\begin{enumerate}
\item $p>q$ and $\alpha\ne 0$
\item $\alpha\circ r=0$
\item $Q_n(\alpha)=0$
\end{enumerate}
Then $s\circ \tau_{\cal X}:\zz/l_{\cal X}(d)[2d]\sr N$ is not zero.
\end{theorem}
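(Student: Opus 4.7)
I argue by contradiction: assume $s\circ\tau_{\cal X}=0$ and derive $\alpha=0$, contradicting condition (1). Under this assumption, $s$ factors through the mapping cone, giving $\tilde{s}:C\sr N$ with $s=\tilde{s}\circ\rho$, where $C=\mathrm{cone}(\tau_{\cal X})$ sits in the distinguished triangle
\[
\zz/l_{\cal X}(d)[2d]\xr{\tau_{\cal X}} M(X)\xr{\rho} C\xr{v_{\cal X}}\zz/l_{\cal X}(d)[2d+1].
\]
The composition $\pi_{\cal X}\circ\tau_{\cal X}$ lies in $H^{-2d,-d}({\cal X},\zz/l)=0$ (the case $d=0$ using $[X\!:\!k]\equiv 0\pmod{l}$, since $X$ is a $\nu_0$-variety), so $\pi_{\cal X}$ lifts to a morphism $\tilde{t}_{\cal X}:C\sr\zz/l_{\cal X}$, uniquely because $H^{-2d-1,-d}({\cal X},\zz/l)=0$. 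Since $r\circ\tilde{s}$ is also such a lift (as $r\circ\tilde{s}\circ\rho=rs=\pi_{\cal X}$), uniqueness forces $r\circ\tilde{s}=\tilde{t}_{\cal X}$; combined with condition (2) this yields $\alpha\circ\tilde{t}_{\cal X}=\alpha\circ r\circ\tilde{s}=0$.

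The argument of Lemma \ref{toplemma} carries over to the relative setting to give $Q_n(\tilde{t}_{\cal X})=c\cdot v_{\cal X}$ in $H^{2d+1,d}(C,\zz/l)$, with $c=\deg(s_{l^n-1}(X))/l\not\equiv 0\pmod{l}$. Combining this with $Q_n(\alpha)=0$ from condition (3) via the derivation property of $Q_n$ --- precisely as in the proof of Lemma \ref{margolis}, using the Cartan formula for external products --- the vanishing $\alpha\cdot\tilde{t}_{\cal X}=0$ propagates to $\alpha\cdot v_{\cal X}=0$ in $H^{p+2d+1,q+d}(C,\zz/l)$. The long exact sequence of the triangle above then expresses $\alpha$ as $\tau_{\cal X}^*(\beta)$ for some $\beta\in\mathrm{Hom}_{DM_{\cal X}}(M(X),\zz/l_{\cal X}(q+d)[p+2d])$, which by results from \cite{oversub} identifies with $H^{p+2d,q+d}(X,\zz/l)$.

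For the smooth projective variety $X$ of dimension $d$, this motivic cohomology group vanishes whenever $p>q$: either $q\ge 1$ and the weight $q+d$ exceeds $\dim X=d$, or $q=0$ and then $p\ge 1$ forces $p+2d>2(q+d)$, violating the bound $i\le 2j$ for motivic cohomology of a smooth variety. (The case $q<0$ is vacuous, as $\alpha$ would already vanish.) Thus $\beta=0$, whence $\alpha=\tau_{\cal X}^*(\beta)=0$, the desired contradiction. The principal obstacle is the Steenrod step in the second paragraph: one must set up the Cartan framework so that the derivation property of $Q_n$ correctly propagates $\alpha\cdot\tilde{t}_{\cal X}=0$ to $\alpha\cdot v_{\cal X}=0$, most naturally by realising $C$ as a pointed simplicial sheaf and working with external products in $DM$ rather than with internal composition in $DM_{\cal X}$, where tensoring by ${\cal X}$ acts trivially.
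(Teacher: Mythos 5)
Your proof is essentially the same as the paper's: the paper likewise reduces to showing $\tilde{t}\alpha\ne 0$ on the cone of $\tau_{\cal X}$, computes $Q_n(\tilde{t}\alpha)=Q_n(\tilde{t})\alpha=cv\alpha$ via the Cartan formula together with Lemma \ref{toplemma}, and identifies the obstruction group with $H^{p+2d,q+d}(X,\zz/l)$; it also realizes the cone as a pointed simplicial sheaf by smashing the cofibration sequence (\ref{cofseq}) with ${\cal X}_+$, which is precisely the technical move you correctly anticipate being needed to make the Steenrod computation legitimate.

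The one incorrect step is your case analysis for the final vanishing. For $q\ge 1$ you appeal to ``the weight $q+d$ exceeds $\dim X$,'' but weight exceeding dimension is not a vanishing criterion for motivic cohomology of a smooth variety (e.g.\ $H^{n,n}(Spec\,k,\zz/l)\cong K^M_n(k)/l$ can be nonzero for all $n$, while $\dim=0$). The criterion you need, which handles both cases uniformly and is what the paper uses, is the cohomological dimension bound $H^{i,j}(X,\zz/l)=0$ when $i-j>\dim X$: here $i-j=(p+2d)-(q+d)=p+d-q>d$ exactly because $p>q$. Replacing the case split by this single inequality repairs the proof without otherwise changing it.
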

\begin{proof}
Let $N'$ be the motive defined by the distinguished triangle
$$\zz/l_{\cal X}(q)[p-1]\sr N'\sr \zz/l_{\cal
X}\stackrel{\alpha}{\sr}\zz/l_{\cal X}(q)[p]$$
Our assumption that $\alpha\circ r=0$ is equivalent to the assumption
that there is a morphism $N\sr N'$ which makes the diagram
$$
\begin{CD}
N @>>> N'\\
@VrVV @VVV\\
\zz/l_{\cal X} @>Id>> \zz/l_{\cal X}
\end{CD}
$$
commutative. Therefore to prove the proposition it is sufficient to
show that the composition
\begin{eq}
\llabel{compnon}
g:\zz_{\cal X}(d)[2d]\sr M(X)\sr N\sr N'
\end{eq}
is non zero. We may now forget about the original $N$ and consider
only $N'$.

The composition $\pi_{\cal X}\tau_{\cal X}$ is zero and there exists a
unique morphism
$$\tilde{\pi}_{\cal X}:cone(\tau_{\cal X})\sr \zz/l_{\cal X}$$
which restricts to $\pi_{\cal X}$ on $M(X)$. If the
composition (\ref{compnon}) is zero then
$$\alpha\circ\tilde{\pi_{\cal X}}:cone(\tau_{\cal X})\sr \zz/l_{\cal
X}(q)[p]$$
is zero. To finish the proof of the proposition it remains to show
that it is non-zero. Smashing the sequence (\ref{cofseq}) with ${\cal
X}_{+}$ we get a cofibration sequence
$$T^{N}\wedge {\cal X}_+\sr Th_{X}(V)\wedge {\cal X}_+\sr
(Th_X(V)/T^N)\wedge{\cal X}_+\stackrel{\partial_{\cal X}}{\sr} \Sigma^1_s T^N\wedge
{\cal X}_+$$
Up to the shift of the bidegree by $(2N-2d,N-d)$, the motivic
cohomology of $(Th_X(V)/T^N)\wedge{\cal X}_+$ coincide as the module
over the motivic cohomology of ${\cal X}$ with the motivic cohomology
of $cone(\tau_{\cal X})$ such that $\tilde{\pi}_{\cal X}$ corresponds to the
pull-back of $\tilde{t}$.

Hence all we need to show that $\tilde{t}\alpha\ne 0$.  We are going
to show that $Q_n(\tilde{t}\alpha)\ne 0$. For $l>2$ one has by
\cite[Prop. 13.3]{Redpub} 
\begin{eq}
\llabel{prodf}
Q_n(u\wedge v)=Q_n(u)\wedge v \pm u\wedge Q_n(v)
\end{eq}
and since $Q_n(\alpha)=0$ we get that
\begin{eq}
\llabel{difpr}
Q_n(\tilde{t}\alpha)=Q_n(\tilde{t})\alpha.
\end{eq}
For $l=2$ we have additional terms in (\ref{prodf}) which depend on
$Q_{i}(\tilde{t})$ for $i<n$. It follows from the simple weight
considerations that $Q_{i}(\tilde{t})=0$ for $i<n$ and therefore
(\ref{difpr}) holds for $l=2$ as well.

Lemma \ref{toplemma} shows that the right hand side of (\ref{difpr})
equals $cv\alpha$ where $c=s_{l^n-1}(X)/l$. Since $X$ is a
$\nu_n$-variety, $c$ is an invertible element of $\zz/l$. Hence it
remains to check that $v\alpha\ne 0$. Since $v=\partial^*(u)$ where
$u$ is the generator of
$$\zz/l=H^{2N+1,N}(\Sigma^1_sT^N,\zz/l)$$
we have $v\alpha=\partial^*_{\cal X}(u\alpha)$.  The element $u\alpha$
lies in the bidegree $(p+2N+1,q+N)$. The kernel of $\partial^*_{\cal
X}$ in this bidegree is covered by the group
\begin{eq}
\llabel{covergroup}
H^{p+2N+1,q+N}(\Sigma^1_s Th_X(V)\wedge {\cal
X}_+,\zz/l)=H^{p+2N,q+N}(Th_X(V)\wedge {\cal
X}_+,\zz/l)
\end{eq}
The image of the projection $pr:Th_X(V)\wedge {\cal X}_+\sr Th_X(V)$ 
in $DM$ is an appropriate twist of the morphism
$$M(X)\oo \zz_{\cal X}\sr M(X)$$
which is an isomorphism by \refdonotneed. Therefore, $pr$
defines an isomorphism on the motivic cohomology with
$\zz/l$-coefficients and we conclude that (\ref{covergroup})
is isomorphic to the group
$$H^{p+2N,q+N}(Th_X(V),\zz/l)=H^{p+2d,q+d}(X,\zz/l)$$
which is zero for $p>q$ by the cohomological dimension theorem.
\end{proof}
\begin{remark}\rm
The end of the proof of Theorem \ref{degree} shows that the first
condition of the theorem can be replaced by the condition that
$\alpha$ does not belong to the image of the homomorphism 
$$H_{-p,-q}(X,\zz/l)\sr H^{p,q}({\cal X},\zz/l).$$
\end{remark}

\subsection{Generalized Rost motives}
\llabel{s4}
In this section we work over fields of characteristic zero to be able
to use the results of Section \ref{comp2} and the motivic duality. All
motives are with $\zz_{(l)}$-coefficients.  We consider $n>0$ and an
embedded smooth simplicial scheme $\cal X$ which satisfies the
following conditions:
\begin{enumerate}
\item There exists a $\nu_n$-variety $X$ such that $M(X)$ lies in
$DM_{\cal X}$
\item There exists an element $\delta$ in $H^{n+1,n}({\cal X},\zz/l)$
such that
\begin{eq}
\llabel{nonzeroeq}
Q_0Q_1\dots Q_{n}(\delta)\ne 0
\end{eq}
where $Q_i$ are the Milnor operations introduced in \cite[Sec.13]{Redpub}.
\end{enumerate}
Under these conditions we will show that there exists a Tate motive
$M_{l-1}$ in $DM_{\cal X}$ which is a direct summand of $M(X)$. Using
the construction of $M_{l-1}$ we will show among other things that 
$$M({\cal X})=M(\check{C}(X)).$$
\begin{remark}\rm
Note that our assumptions imply in particular that $X$ has no
zero cycles of degree prime to $l$.
\end{remark}
\begin{remark}\rm
Modulo the Bloch-Kato conjecture in weight $\le n$ and Conjecture
\ref{lessiseq} (or assuming that for all $i\le n$ there exist a
$\nu_i$-variety $X_i$ such that $M(X_i)$ is in $DM_{\cal X}$),
the condition (\ref{nonzeroeq}) is equivalent to the condition $\delta\ne
0$ (see the proof of Lemma \ref{step2}).
\end{remark}
\begin{remark}\rm
Let ${\cal X}_0$ be the zero term of $\cal X$. Then, modulo the
Bloch-Kato conjecture in weight $\le n$ one has
$$H^{n+1,n}({\cal X},\zz/l)=$$
$$=\bigcap_{\alpha}ker(H^{n+1}_{et}(k,\mu_l^{\oo n})\sr
H^{n+1}_{et}(k(X_{\alpha}),\mu_l^{\oo n}))$$
where $X_{\alpha}$ are the connected components of ${\cal X}_0$ (see
the proof of Lemma \ref{comp1}). Therefore, our conditions on $\cal X$
can be reformulate by saying that there exist $\nu_{i}$-varieties in
$DM_{\cal X}$ for all $i\le n$ and 
$$ker(H^{n+1}_{et}(k,\mu_l^{\oo n})\sr
H^{n+1}_{et}(k(X_{\alpha}),\mu_l^{\oo n}))\ne 0$$
i.e. ${\cal X}_0$ splits a non-zero element in
$H^{n+1}_{et}(k,\mu_l^{\oo n})$.
\end{remark}
\begin{remark}\rm
Extending the previous remark we see that if $k$ contains a primitive
$l$-th root of unity (such that $\mu_l\cong \zz/l$) the results of
this section are applicable to all non-zero elements in
$H^{n+1,n+1}(k,\zz/l)$ which can be split by a
$\nu_n$-variety. Theorem \ref{Rost2} shows that any pure symbol
i.e. the product of $n+1$ elements from $H^{1,1}$ is such an element.
It seems natural to conjecture that the inverse implication also holds
i.e. that an element in $H^{n+1,n+1}(k,\zz/l)$ which can be split by a
$\nu_n$-variety is a pure symbol.
\end{remark}
Set
\begin{eq}
\llabel{defmu}
\mu=\tilde{Q_0}Q_1\dots Q_{n-1}(\delta)
\end{eq}
where $\tilde{Q_0}$ is the intergal-valued Bockstein
homomorphism 
$$H^{*,*}(-,\zz/l)\sr H^{*+1,*}(-,\zz)$$
Then 
$$\mu\in H^{2b+1,b}({\cal X},\zz)$$
where $b=(l^n-1)/(l-1)$. 

Consider $\mu$ as a
morphism in the category of Tate motives over ${\cal X}$ and define
$M=M_{\mu}$ by the distinguished triangle in $DM_{\cal X}$
of the form
\begin{eq}
\llabel{deftr}
\zz_{\cal X}(b)[2b]\stackrel{x}{\sr}
M \stackrel{y}{\sr} \zz_{\cal X}\stackrel{\mu}{\sr}
\zz_{\cal X}(b)[2b+1]
\end{eq}
For any $i<l$ let
\begin{eq}
\llabel{newline}
M_i=S^{i}M
\end{eq}
be the $i$-th symmetric power of $M$. The motive $M_{l-1}$ is called
the {\em generalized Rost motive} defined by $X$ and $\delta$. Note
that $\mu$ is an l-torsion element and therefore we have
$$M_{i}\oo {\bf Q}=\oplus_{j=0}^{i} \qq(jb)[2jb].$$
With integral coefficients $M_i$ does not split into a direct
sum. Instead the distinguished triangles of the form (\ref{seq1}) and
(\ref{seq2}) give us distingished triangles
\begin{eq}
\llabel{seq1n}
M_{i-1}(b)[2b]\sr M_{i}\sr \zz_{\cal X}\sr M_{i-1}(b)[2b+1]
\end{eq}
and
\begin{eq}
\llabel{seq2n}
\zz_{\cal X}(bi)[2bi] \sr M_i\sr
M_{i-1}\sr \zz_{\cal X}(bi)[2bi+1]
\end{eq}
which describe $M_i$ in terms of Tate motives
$$\zz_{\cal X}(jb)[2jb]=M({\cal X})(jb)[2jb]$$
over $\cal X$.  The main goal of this section is to show that
$M_{l-1}$ is a pure motive which is essentially self-dual and which
splits as a direct summand from $M(X)$. It can be shown that this
property is special to $M_{l-1}$ and does not hold for $M_i$ where
$i<l-1$.
\begin{example}\rm\llabel{exl2}
For $l=2$ the Pfister quadric $Q_{\uu{a}}$ defined by
a sequence of invertible elements $(a_1,\dots,a_{n+1})$ of $k$ is a
$\nu_n$-variety. There is a unique non-zero class $\delta$ in
$H^{n+1,n}(\check{C}(Q_{\uu{a}}),\zz/2)$ and it satisfies the
condition (\ref{nonzeroeq}). The corresponding motive $M_1=M$ is
the standard Rost motive considered in \cite{MCpub}.
\end{example}
\begin{example}\rm\llabel{exn0}
Everywhere below we consider the case $n>0$. The case $n=0$ gives a
good motivating example but the construction of $M$ has to be modified
slightly since (\ref{defmu}) clearly makes no sense in this case. A
$\nu_0$-variety is a variety of dimension zero and degree non divisble
by $l^2$. The simpliest interesting example is $X=Spec(E)$
where $E$ is an extension of degree $l$. In order to have
$H^{1,0}(\check{C}(X),\zz/l)\ne 0$, $k$ must contain a primitive
$l$-th root of unity. In that case we may set $\mu=\delta$ and define
$M$ as a motive with $\zz/l$-coefficients given by 
$$\zz/l\sr M\sr \zz/l\stackrel{\delta}{\sr}\zz/l[1]$$
over $\check{C}(X)$. Then $M_{l-1}$ is the motive of $Spec(E)$ with
$\zz/l$-coefficients.
\end{example}
We start with several results about the motives $M_i$ which do not depend
on any subtle properties of $X$ or $\mu$. For the proof of these
results it will be convenient to consider our motives as relative Tate
motives over $\cal X$.
\begin{lemma}
\llabel{M1}
For any $i=1,\dots,l-1$ there exists a morphism 
$$e_i:M_i\oo M_i\sr
\zz_{\cal X}(bi)[2bi]$$
such that $(M_i,e_i)$ is an internal Hom-object from $M_i$ to $\zz_{\cal
X}(bi)[2bi]$ in $DM_{\cal X}$. 
\end{lemma}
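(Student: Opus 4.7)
The plan is to exhibit $M_i$ as internally self-dual up to the Tate twist $(bi)[2bi]$, first proving the $i=1$ case and then propagating it through the symmetric power functor.

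I would begin by dualizing (\ref{deftr}). Applying the contravariant exact functor $\ih(-,\zz_{\cal X}(b)[2b])$ to (\ref{deftr}) and rotating yields a distinguished triangle
\begin{equation*}
\zz_{\cal X}(b)[2b] \sr \ih(M,\zz_{\cal X}(b)[2b]) \sr \zz_{\cal X} \stackrel{\mu'}{\sr} \zz_{\cal X}(b)[2b+1],
\end{equation*}
and, under the canonical identifications of internal Hom groups between invertible Tate objects, the new connecting morphism $\mu'$ is $\mu$ up to a sign. Comparing with (\ref{deftr}) and invoking the uniqueness of extensions \refunext gives an isomorphism $\ih(M,\zz_{\cal X}(b)[2b])\cong M$; equivalently, a pairing $e_1\colon M\oo M\sr \zz_{\cal X}(b)[2b]$ exhibiting $M$ as its own internal Hom into $\zz_{\cal X}(b)[2b]$.

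Next I would show that, for $i<l$ and any strongly dualizable object $N$ of $DM_{\cal X}$, there is a natural isomorphism
\begin{equation*}
\ih(S^iN,\zz_{\cal X}) \cong S^i(\ih(N,\zz_{\cal X})).
\end{equation*}
This is a formal consequence of tensor-compatibility of the duality functor on strongly dualizable objects, its $S_i$-equivariance on $N^{\oo i}$, and invertibility of $i!$ (which holds since $i<l$), which together ensure that the averaging projector defining $S^i$ is preserved by $(-)^\vee$. Since $M$ sits in a distinguished triangle of invertible Tate objects, it is strongly dualizable, and therefore so is every $M_i=S^iM$.

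Combining the two steps and using that tensoring with the invertible object $\zz_{\cal X}(bi)[2bi]$ commutes with $S^i$ (no sign obstruction, the twist being of even cohomological degree), I obtain
\begin{equation*}
\ih(M_i,\zz_{\cal X}(bi)[2bi]) \cong S^i(\ih(M,\zz_{\cal X}(b)[2b])) \cong S^iM = M_i,
\end{equation*}
and the adjoint of this isomorphism is the desired pairing $e_i$. The main obstacle will be Step 1: pinning down the connecting morphism of the dualized triangle as a unit multiple of $\mu$ so that \refunext applies; once this is established, the remainder of the argument is formal tensor-triangulated manipulation together with the standard commutation of $S^i$ with duality in the range $i<l$.
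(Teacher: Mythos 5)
Your proof is correct and follows essentially the same route as the paper's: the $i=1$ case rests on the self-duality of $\mu$ under the duality functor $\ih(-,\zz_{\cal X}(b)[2b])$, and the general case follows because symmetric powers commute with duality for strongly dualizable objects when $i!$ is invertible. The one cosmetic mismatch is that the paper does not invoke \refunext{} for the $i=1$ step; instead it observes $D\mu=\mu$ and applies \refappmain{} directly, which packages the ``dualize the triangle, compare boundary maps'' argument you carry out by hand, and for the inductive step it defines $e_i=S^i(e_1)$ explicitly and cites \refdualten{} for exactly the commutation of $S^i$ with duality that you argue from general tensor-triangulated principles. So you have rediscovered the content of the two cited lemmas rather than quoted them; the substance is the same and the argument is sound.
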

\begin{proof}
Consider first the
case $i=1$. Since the Tate objects are quasi-invertible there exist
internal Hom-objects $(\zz_{\cal X},u)$ (resp. $(\zz_{\cal
X}(b)[2b],v)$) from $\zz_{\cal X}(b)[2b]$ (resp. $\zz_{\cal X}$) to
$\zz_{\cal X}(b)[2b]$. The dual $D\mu$ is again $\mu$ and applying
\refappmain to the distinguished triangle defining $M$ we
conclude that there exists $e_1$ with the required property.

We can now define $e_i$ for $i>1$ as the morphism 
$$e_i:M_i\oo M_i\cong S^{i}(M\oo
M)\stackrel{S^{i}e_1}{\sr}  S^{i}(\zz_{\cal X}(b)[2b])=\zz_{\cal
X}(bi)[2bi]$$
\refdualten implies immediately that $(M_i,e_i)$ is an
internal Hom-object from $M_i$ to $\zz_{\cal X}(bi)[2bi]$.
\end{proof}
Consider the homomorphism
\begin{eq}
\llabel{endoeq}
End(M_i)\sr \oplus_{j=0}^i \zz
\end{eq}
defined by the slice functor over $\cal X$ and the identifications
$$End(s_{bj}(M_i))=End(\zz)=\zz,\,\,\, j=0,\dots,i$$
\begin{lemma}
\llabel{endo} The image of (\ref{endoeq}) is contained in the subgroup
of elements $(c_0,\dots,c_i)$ such that $c_k=c_j\,mod\,l$ for all
$k,j$.
\end{lemma}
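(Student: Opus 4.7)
The plan is to prove the lemma by induction on $i$, using the distinguished triangle (\ref{seq2n}) to relate $End(M_i)$ to $End(M_{i-1})$. The base case $i = 0$ is trivial. Given $f \in End(M_i)$ with slice components $(c_0, \dots, c_i)$, the first step is to descend $f$ to an endomorphism $\bar f \in End(M_{i-1})$ with slice components $(c_0, \dots, c_{i-1})$. Since $M_{i-1}$ admits a filtration by Tate motives $\zz_{\cal X}(bj)[2bj]$ for $0 \le j \le i-1$, and since motivic cohomology of $\cal X$ vanishes in negative weights, both $Hom(\zz_{\cal X}(bi)[2bi], M_{i-1})$ and its $[1]$-shifted analog vanish. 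Hence $u \circ f \circ x^i = 0$, so $f \circ x^i$ factors uniquely as $x^i \cdot c_i$ (the coefficient is identified by applying the slice functor), and the triangulated category axioms produce the desired $\bar f$. The inductive hypothesis then yields $c_0 \equiv \cdots \equiv c_{i-1} \pmod{l}$.

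The next step is to extract $c_i \equiv c_{i-1} \pmod{l}$ from the third square of the morphism of triangles, which gives $r \circ \bar f = c_i \cdot r$. Composing on the right with $x^{i-1} : \zz_{\cal X}(b(i-1))[2b(i-1)] \to M_{i-1}$, and using the analog of the previous step to conclude $\bar f \circ x^{i-1} = c_{i-1} \cdot x^{i-1}$, one obtains
$$(c_i - c_{i-1}) \cdot (r \circ x^{i-1}) = 0$$
as an element of $H^{2b+1,b}({\cal X}, \zz)$. The crucial input is the identity $r \circ x^{i-1} = i \cdot \mu$, which reflects the symmetric-power combinatorics: the top two slices of $S^i M$ form a sub-motive whose extension class, after symmetrizing $M^{\otimes i}$, is the sum of the $i$ contributions of $\mu$ coming from each tensor position. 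This identity can be extracted from the slice formulas in the proof of Lemma \ref{mainseq}, or verified directly by pulling back the triangle (\ref{seq2n}) along $x^{i-1}$.

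Finally, since $1 \le i \le l-1$, $i$ is invertible in $\zz_{(l)}$; and $\mu$ has order exactly $l$ in $H^{2b+1,b}({\cal X}, \zz)$, since $l\mu = 0$ (as $\mu$ lies in the image of the integral Bockstein $\tilde Q_0$) while $\rho(\mu) = Q_0 Q_1 \cdots Q_{n-1}(\delta) \ne 0$ (because applying $Q_n$ and using the anti-commutativity of the Milnor operations yields $\pm Q_0 Q_1 \cdots Q_n(\delta) \ne 0$ by hypothesis). We therefore conclude $l \mid c_i - c_{i-1}$, which combined with the inductive hypothesis gives the claim. The main obstacle is the extension-class computation $r \circ x^{i-1} = i\mu$, which encodes how the iterated extensions in $S^i M$ are glued together; once this is in hand, the induction closes cleanly and the invertibility of $i$ modulo $l$ is exactly what allows the factor to be canceled.
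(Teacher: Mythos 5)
Your proof is correct and takes a genuinely different route from the paper's. The paper proceeds non-inductively: it observes that for each $j$ the two-slice truncation $\Pi_{\ge jb}\Pi_{<(j+2)b}(M_i)$ is (by Lemma \ref{mainseq}) isomorphic to $M(jb)[2jb]$, so an endomorphism $w$ of $M_i$ restricts to an endomorphism of $M$ with slices $(c_j,c_{j+1})$; then the naturality of the defining triangle for $M$ together with $\mu\ne 0 \bmod l$ gives $(c_{j+1}-c_j)\mu=0$ directly for every $j$. You instead run an induction on $i$: descend $f$ to $\bar f$ on $M_{i-1}$ via the triangle (\ref{seq2n}), invoke the inductive hypothesis, and close by the constraint extracted from the boundary map $r$. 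Both arguments hinge on the same key input: the extension class gluing the top two slices of $M_i$ is a unit multiple of $\mu$, and $\mu$ is nonzero modulo $l$. Your inductive route is more hands-on; the paper's truncation argument is more symmetric in $j$ and avoids the descent step. One small caveat: your claimed coefficient $r\circ x^{i-1}=i\mu$ is not quite right as stated --- a direct check in the model case of Example \ref{exn0} (where $M$ is the standard two-dimensional $\zz/l[\zz/l]$-module, $u(e_1^ae_2^b)=be_1^ae_2^{b-1}$, and the pullback of the top triangle along $x^{i-1}$ is $\mathrm{span}(e_1^i,e_1^{i-1}e_2)\cong M$) yields coefficient $1$, and in general the coefficient is a unit of $\zz_{(l)}$ that depends on the normalizations in Lemma \ref{mainseq}. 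Your ``sum over $i$ tensor positions'' heuristic conflates the extension at the bottom of the tower (where $s_*(u)(t^0)=i\,t^0$) with the one at the top (where $s_*(u)(t^{i-1})=t^{i-1}$). Since all that your argument actually needs is that $r\circ x^{i-1}$ is a unit times $\mu$, this does not affect the validity of the proof; the invertibility of $i$ that you invoke is simply absorbed into the unit.
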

\begin{proof}
Let $w$ be an endomorphism of $M_i$, $c_j$ be the $j$-th slice of $w$
and $c_{j+1}$ the $(j+1)$-st slice of $w$. We need to show that
$c_j=c_{j+1}\,mod\,l$. Consider the object $\Pi{\ge jb}\Pi_{<
jb+2}(M_i)$. By Lemma \ref{mainseq} we have
$$\Pi_{\ge jb}(M_i)=M_j(jb)[2jb]$$
$$\Pi_{< jb+2}(M_j(jb)[2jb])=(\Pi_{<2}(M_j))(jb)[2jb]=M(jb)[2jb]$$
This reduces the problem to the case $j=0$ and $i=1$ i.e. to an
endomorphism
$${M}\sr {M}.$$
Since the defining triangle for $M$ coincides with one of the
triangles of the slice tower of $M$ it is natural in $M$. This fact
together with the fact that $\mu$
is non-zero modulo $l$ implies the result we need.
\end{proof}
\begin{remark} \rm It is easy to see that the image of (\ref{endoeq})
in fact coincides with the subgroup of Lemma \ref{endo}.
\end{remark}
\begin{cor}
\llabel{simple} Let $w:M_i\sr M_i$ be a morphism such
that the square
$$
\begin{CD}
M_i @>w>> M_i\\
@VVV @VVV\\
\zz_{\cal X} @>c>> \zz_{\cal X}
\end{CD}
$$
commutes for an integer $c$ prime to $l$. Then $w$ is an isomorphism.
\end{cor}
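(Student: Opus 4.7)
The plan is to induct on $i$, using the distinguished triangle (\ref{seq2n})
$$\zz_{\cal X}(bi)[2bi]\lr M_i \stackrel{u}{\lr} M_{i-1}\lr \zz_{\cal X}(bi)[2bi+1]$$
(which identifies with a piece of the slice tower of $M_i$ by Lemma \ref{mainseq}) together with Lemma \ref{endo} to control the slices of $w$, and the triangulated five lemma to conclude.

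The base case $i=0$ is trivial: $M_0=\zz_{\cal X}$ and $w$ is multiplication by $c$, which is a unit in $\zz_{(l)}$. For the inductive step, apply the slice truncations $\Pi_{\ge bi}$ and $\Pi_{<bi}$ to $w$. Functoriality of the slice tower produces from $w$ a morphism of the above distinguished triangle with components $(c_i,w,w')$, where $c_i\in\zz$ is the top slice of $w$ and $w'$ is an induced endomorphism of $M_{i-1}$. Lemma \ref{endo} forces $c_i\equiv c_0=c\pmod l$, so $c_i$ is a unit in $\zz_{(l)}$ and the leftmost vertical arrow is an isomorphism. Once $w'$ is shown to satisfy the hypothesis of the corollary with the same constant $c$, the inductive hypothesis gives that $w'$ is an isomorphism, and the triangulated five lemma then yields the same for $w$.

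The one point requiring care is the transfer of the hypothesis from $w$ to $w'$. Since $\zz_{\cal X}$ lies in slice range $<bi$, the universal property of $\Pi_{<bi}$ implies that pre-composition with $u$ is a bijection
$$\mathrm{Hom}(M_{i-1},\zz_{\cal X})\stackrel{\sim}{\lr}\mathrm{Hom}(M_i,\zz_{\cal X}).$$
In particular there is a unique $\bar y:M_{i-1}\to\zz_{\cal X}$ such that $\bar y\circ u$ is the given morphism $y^i:M_i\to\zz_{\cal X}$ from the hypothesis; and $\bar y$ differs from the canonical $y^{i-1}$ only by an invertible scalar in $\zz_{(l)}$, in view of the slice formula $y^{i-1}\circ u = i\cdot y^i$ from the proof of Lemma \ref{mainseq} together with $i<l$. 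Combining $w'\circ u=u\circ w$ with $y^i\circ w = c\cdot y^i$ gives $(\bar y\circ w')\circ u = (c\cdot \bar y)\circ u$, hence $\bar y\circ w' = c\cdot \bar y$ by the bijection above. This is exactly the commutative square needed to apply the inductive hypothesis to $w'$, so the induction closes. The main (mild) obstacle is thus just careful bookkeeping of slice-tower functoriality, rendered painless because all relevant scalars (including every $i=1,\dots,l-1$) are units in $\zz_{(l)}$.
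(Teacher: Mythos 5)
Your proof is correct, but it takes a different (and longer) route than the paper. The paper's own argument is very short: since the slice functor is conservative on Tate motives over $\cal X$, it suffices to check that every slice of $w$ is invertible. The commuting square forces the zeroth slice of $w$ to be $c$, and Lemma \ref{endo} then forces every other slice to be congruent to $c$ modulo $l$, hence a unit in $\zz_{(l)}$; conservativity finishes. Your argument instead \emph{reproves} the relevant special case of conservativity by hand: you induct along the slice tower using the triangle (\ref{seq2n}), invoke the triangulated five lemma to pass isomorphism from the extremes to the middle, and carefully transfer the defining commutative square from $w$ to its truncation $w'$ via the bijection $\mathrm{Hom}(M_{i-1},\zz_{\cal X})\cong\mathrm{Hom}(M_i,\zz_{\cal X})$ and the relation between $y^{i-1}\circ u$ and $y^i$. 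That transfer step is the genuinely new bookkeeping your approach requires, and you handle it correctly (the scalar $i$ is a unit because $i<l$). Both proofs hinge on Lemma \ref{endo} as the arithmetic input. What the paper's route buys is brevity, by using conservativity of the slice functor as a black box from \cite{oversub}; what yours buys is self-containedness: you only use the existence of the slice truncation triangles and their functoriality, not a pre-packaged conservativity statement. The trade-off is that your argument is several times longer and must keep track of how the projections $y^i$, $y^{i-1}$ interact with $u$, a detail the paper's appeal to conservativity sidesteps entirely.
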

\begin{proof}
Since the slice functor is conservative on Tate motives it is
sufficient to show that $w$ is an isomorphism on each slice. Our
assumption implies that $w$ is $c$ on the zero slice and since $c$ is
prime to $l$ it is an isomorphism there. We conlude that $w$ is also
prime to $l$ and hence an isomorphism on the other slices by Lemma
\ref{endo}.
\end{proof}
Let 
$$\pi_{\cal X}:M(X)\sr \zz_{\cal X}$$
be the unique morphism such that the composition
$$M(X)\sr \zz_{\cal X}\sr \zz$$
is the structure morphism $\pi:M(X)\sr \zz$.
\begin{lemma}
\llabel{lex} For any smooth $X$ such that $M(X)$ is in $DM_{\cal X}$
there exists $\lambda$ which makes the diagram
\begin{eq}\llabel{lam}
\begin{CD}
M(X) @>\lambda>> M_i\\
@V\pi_{\cal X}VV @VVS^{i}(y)V\\
\zz_{\cal X} @>Id>> \zz_{\cal X}
\end{CD}
\end{eq}
commutative.
\end{lemma}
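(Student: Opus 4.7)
The plan is to construct $\lambda$ in two stages: first produce a preliminary lift $\tilde{\lambda}: M(X) \to M$ of $\pi_{\cal X}$ along $y$, then combine $S^{i}(\tilde{\lambda})$ with the symmetric diagonal of $M(X)$ to obtain $\lambda$.

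For the first stage, applying $Hom_{DM_{\cal X}}(M(X), -)$ to the defining triangle (\ref{deftr}) of $M$ reduces the existence of $\tilde{\lambda}$ to the vanishing of the obstruction
\[
\mu \circ \pi_{\cal X} \in Hom_{DM_{\cal X}}(M(X), \zz_{\cal X}(b)[2b+1]).
\]
Here the key observation is that $\mu = \tilde{Q}_0(Q_1\cdots Q_{n-1}(\delta))$ is the integral Bockstein of a mod-$l$ class. Using that $M(X)$ lies in $DM_{\cal X}$ (in the spirit of \refrl), one relates this obstruction to the restriction $\mu|_X \in H^{2b+1,b}(X,\zz)$, which equals $\tilde{Q}_0(Q_1\cdots Q_{n-1}(\delta)|_X)$. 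Since $Q_1\cdots Q_{n-1}(\delta)|_X$ sits in the bidegree $(2b,b)$ on the smooth variety $X$, where classes lift (via Chow-theoretic considerations) to the integral motivic cohomology, its integral Bockstein vanishes.

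Given $\tilde{\lambda}$, I would then define
\[
\lambda: M(X) \xrightarrow{\tilde{\Delta}} S^{i}(M(X)) \xrightarrow{S^{i}(\tilde{\lambda})} S^{i}(M) = M_{i},
\]
where $\tilde{\Delta}$ is the $i$-fold diagonal of $M(X)$ factored through $S^{i}(M(X))$ (this factorization exists because the diagonal is $S_{i}$-invariant). Commutativity of (\ref{lam}) then follows from
\[
S^{i}(y) \circ \lambda = S^{i}(y\tilde{\lambda}) \circ \tilde{\Delta} = S^{i}(\pi_{\cal X}) \circ \tilde{\Delta} = \pi_{\cal X},
\]
where the last equality uses the identification $S^{i}(\zz_{\cal X}) = \zz_{\cal X}$, valid since ${\cal X}$ is embedded so $\zz_{\cal X}^{\otimes i} = \zz_{\cal X}$ with trivial $S_{i}$-action, together with the compatibility that the diagonal $X \to X^{\times i}$ composed with $\pi_{\cal X}^{\times i}$ recovers $\pi_{\cal X}$.

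The main obstacle will be the first stage: justifying the vanishing of $\mu \circ \pi_{\cal X}$. This requires a careful identification of the relevant Hom group in $DM_{\cal X}$ with (a piece of) integral motivic cohomology of $X$, combined with the fact that $\mu$ is an integral Bockstein applied to a class in a bidegree where the kernel of $\tilde{Q}_0$ covers everything. Once that is established, the construction of $\lambda$ and the verification of commutativity are formal.
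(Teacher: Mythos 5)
Your proof is correct, but it takes a genuinely different route from the paper's. The paper directly lifts $\pi_{\cal X}$ along $S^i(y)\colon M_i\to\zz_{\cal X}$: the obstruction lives in $Hom(M(X),M_{i-1}(b)[2b+1])$, and it is computed inductively via the triangles (\ref{seq2n}) to be built from the groups $Hom(M(X),\zz_{\cal X}(bj)[2bj+1])=H^{2bj+1,bj}(X,\zz)$, which vanish for smooth $X$. You instead lift once, along $y\colon M\to\zz_{\cal X}$, obtaining $\tilde\lambda$, and then produce $\lambda$ functorially as $S^i(\tilde\lambda)$ precomposed with the symmetrized diagonal $M(X)\to S^i(M(X))$. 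The commutativity check works because $\pi_{\cal X}^{\otimes i}\circ M(\delta_i)$ is a lift of $\pi$ through $\zz_{\cal X}$ and hence equals $\pi_{\cal X}$ by the uniqueness in \refrl. Your construction needs only one obstruction group to vanish instead of $i-1$ of them, at the cost of a bit more monoidal bookkeeping (identifying $S^i(\zz_{\cal X})=\zz_{\cal X}$, factoring the diagonal through $S^i(M(X))$, and the compatibility $S^i(\pi_{\cal X})\circ\tilde\Delta=\pi_{\cal X}$).

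One comment on your vanishing argument for $\mu\circ\pi_{\cal X}$: you argue that the specific class $\mu|_X=\tilde Q_0(Q_1\cdots Q_{n-1}(\delta)|_X)$ vanishes because mod-$l$ classes in bidegree $(2b,b)$ on a smooth $X$ lift integrally and hence have trivial integral Bockstein. This is true, but it is equivalent to (and tacitly relies on) the simpler fact that the entire target group $H^{2b+1,b}(X,\zz)$ is zero, since $2b+1>2b$ and motivic cohomology of a smooth scheme vanishes above the line $p=2q$. The paper just uses that vanishing directly; your Bockstein detour is correct but unnecessary, and is in fact the same statement in disguise.
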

\begin{proof}
The distinguished triangle of the form (\ref{seq1n}) for $M_{i}$ 
shows that the obstruction to the existence of $\lambda$ lies in the
group of morphisms
$$Hom(M(X),M_{i-1}(b)[2b+1])$$
Using induction on $i$ and the sequences (\ref{seq2n}) to compute
these group we see that it is build out of the groups
$$Hom(M(X),\zz_{\cal X}(bj)[2bj+1])=H^{2bj+1,bj}(X,\zz)$$
where the equality holds by \refrl. Since $X$ is smooth these
groups are zero.
\end{proof}
Let us now consider the motive $M_i$ for $i=l-1$. To simplify the
notations we set $d=b(l-1)=l^n-1$.
\begin{proposition}
\llabel{nondiv1} For any $\lambda$ which makes the square (\ref{lam})
commutative (for $i=l-1$) the composition
$$\lambda\tau_{\cal X}:\zz_{\cal
X}(d)[2d]\sr M_{l-1}$$
is not divisible by $l$.
\end{proposition}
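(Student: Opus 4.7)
The plan is to contradict the assumption that $\lambda\tau_{\cal X}$ is divisible by $l$ by applying the motivic degree theorem (Theorem \ref{degree}) after reducing modulo $l$. Since divisibility of $\lambda\tau_{\cal X}$ by $l$ is equivalent to the vanishing of its reduction modulo $l$ as a morphism $\zz/l_{\cal X}(d)[2d]\sr M_{l-1}\oo\zz/l$, I would take $N:=M_{l-1}\oo\zz/l$, $s:=\lambda\oo\zz/l$, and $r:=S^{l-1}(y)\oo\zz/l$; the commutative square (\ref{lam}) for $i=l-1$ supplied by Lemma \ref{lex}, reduced mod $l$, gives the commuting diagram needed for Theorem \ref{degree}. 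Producing an $\alpha\in H^{p,q}({\cal X},\zz/l)$ satisfying the three hypotheses of that theorem will force $s\circ\tau_{\cal X}\ne 0$ and thus contradict the divisibility.

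For the class $\alpha$ I would take $\alpha:=\phi_{l-1}(\mu\bmod l)\in\tilde{H}^{2lb+2,\,lb}({\cal X},\zz/l)$, where $\phi_{l-1}$ is the operation constructed in Section \ref{s2}. By Lemma \ref{mainseq} applied with $i=l-1$ and $M=M_\mu$, this class is realized in $DM_{\cal X}(\zz/l)$ as the composition
$$\zz/l_{\cal X}\xrightarrow{\;s_{l-1}\;}M_{l-2}(b)[2b+1]\xrightarrow{\;r_{l-1}\oo\mathrm{Id}\;}\zz/l_{\cal X}(lb)[2lb+2]$$
of the connecting morphisms $s_{l-1}:\zz/l_{\cal X}\sr M_{l-2}(b)[2b+1]$ from (\ref{seq2n}) and $r_{l-1}:M_{l-2}\sr\zz/l_{\cal X}((l-1)b)[2(l-1)b+1]$ from (\ref{seq1n}). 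The bidegree check gives $p-q=lb+2>0$, and the condition $\alpha\circ r=0$ is immediate from (\ref{seq2n}), which gives $s_{l-1}\circ S^{l-1}(y)=0$.

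The two remaining hypotheses of Theorem \ref{degree}, namely $\alpha\ne 0$ and $Q_n(\alpha)=0$, form the heart of the computation. Theorem \ref{maincomp} produces $\alpha=c\,\beta P^b(\mu\bmod l)$ for some $c\in(\zz/l)^*$. Since $\mu=\tilde{Q}_0Q_1\cdots Q_{n-1}(\delta)$ is an integral lift, its reduction mod $l$ equals $Q_0Q_1\cdots Q_{n-1}(\delta)$ and is annihilated by $Q_0,Q_1,\ldots,Q_{n-1}$ (by $Q_i^2=0$ and graded commutativity of the $Q_i$). Combining the iterated commutator formula $Q_n=[P^{l^{n-1}},Q_{n-1}]$ from \cite[Sec.~13]{Redpub} with the motivic Adem relations, one identifies $\beta P^b$, acting on a class in the joint kernel of $Q_0,\ldots,Q_{n-1}$, with $Q_n$ applied to that class (up to an invertible scalar). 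This gives $\alpha=c'Q_n(\mu\bmod l)=\pm c'Q_0Q_1\cdots Q_n(\delta)$ with $c'\in(\zz/l)^*$. Non-vanishing of $\alpha$ then follows from hypothesis (\ref{nonzeroeq}), while $Q_n(\alpha)=0$ follows from $Q_n^2=0$.

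The hardest step will be the algebraic identification in the preceding paragraph: showing that, in the motivic Steenrod algebra, $\beta P^b$ acts on classes annihilated by all $Q_i$ with $i<n$ as a nonzero scalar multiple of $Q_n$. This requires careful bookkeeping with the motivic Adem relations and the iterated commutator formula; the remaining parts of the argument are formal consequences of Theorems \ref{degree} and \ref{maincomp}.
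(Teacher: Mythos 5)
Your proposal follows the same route as the paper's proof: both apply Theorem \ref{degree} to a class built from $\mu$, verify its vanishing on $M_{l-1}$ via the definition of $\phi_{l-1}$ together with Theorem \ref{maincomp}, and carry out the algebraic identification of $\beta P^b$ with $Q_n$ on the joint kernel of $Q_0,\dots,Q_{n-1}$ --- which is exactly Lemma \ref{scomp}, proved in the paper by invoking Milnor's description of the commutation relations in the topological Steenrod algebra (valid since, for $l>2$, the subalgebra generated by $\beta,P^i$ is isomorphic to the topological one). The one point where you deviate is the verification that $\alpha\ne 0$: you deduce it directly by anticommuting $Q_n$ past $Q_0\cdots Q_{n-1}$ and citing (\ref{nonzeroeq}), whereas the paper uses the Margolis homology vanishing (Lemma \ref{margolis}) together with a weight count to show that $\mu$ cannot be in the image of $Q_n$. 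Your argument is shorter and bypasses Lemma \ref{margolis} at this step, but it leans more heavily on the precise anticommutation of the $Q_i$'s in the motivic Steenrod algebra (unproblematic for $l>2$, the case you are treating; the paper defers $l=2$ to \cite{MCpub} anyway). Both arguments are valid; the paper's Margolis route is slightly more robust in that it works with $\mu\ne 0$ alone rather than an explicit factorization. One small slip: you attributed $s\circ S^{l-1}(y)=0$ to (\ref{seq2n}) and $r$ to (\ref{seq1n}); the numbering in Section \ref{s4} is the reverse --- (\ref{seq1n}) is the analogue of (\ref{seq2}) and (\ref{seq2n}) of (\ref{seq1}) --- though this does not affect the substance of the argument.
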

\begin{proof}
In view of Theorem \ref{degree} it is enough to construct a non-zero
motivic cohomology class $\alpha$ in $H^{p,q}({\cal X},\zz/l)$ for
some $p>q$ such that $\alpha$ vanishes on $M_{l-1}$ and such that
$Q_n(\alpha)=0$. We set $\alpha=Q_n(\mu\, mod\, l)$. Let us verify
that all the required conditions hold. The bidegree of $\alpha$ is
$(2b+2d+2,b+d)=(lb+2,lb)$. In particular the dimension is greater than
weight. By Lemma \ref{margolis} the
n-th motivic Margolis homology of the unreduced suspension
$\tilde{\cal X}$ of $\cal X$ is zero. Hence if $Q_n(\mu)=0$ then
$\mu=Q_n(\gamma)$ where
$$\gamma\in H^{2b-2d+1,b-d}(\tilde{\cal X},\zz/l)$$
For $l>2$ and $n>0$ we have $b-d<0$ and this group is zero. For $l=2$
we have $b=d$ and the group $H^{1,0}(\tilde{\cal X},\zz/2)$ is zero
from the long exact sequence relating the motivic cohomology of
$\tilde{\cal X}$ and the motivic cohomology of $\cal X$. Since $\mu\ne
0$ by our assumption (\ref{nonzeroeq}) we conclude that $\alpha\ne 0$.

The condition $Q_n(\alpha)=0$ follows immediately from the fact that
$Q_n^2=0$ (see \cite[Prop. 13.3, 13.4]{Redpub}). It remains to check that
$\alpha$ vanishes on $M_{l-1}$. In view of Theorem \ref{maincomp} and
the definition of the operation $\phi_{l-1}$ the class $\beta
P^{b}(\mu)$ vanishes on $M_{l-1}$. Since $Q_i(\mu)=0$ for $i<n$ we
conclude by Lemma \ref{scomp} that
$$Q_n(\mu)=\beta P^b(\mu)$$
which finishes the proof of the proposition for $l>2$. The proof for
$l=2$ can be easily deduced from the results of \cite{MCpub}.
%
\end{proof}
\begin{lemma}
\llabel{scomp}
One has the following equality in the motivic Steenrod algebra for
$l>2$:
\begin{eq}
\llabel{srel}
Q_0P^{b}=P^bQ_0+P^{b-1}Q_1+P^{b-l-1}Q_2+\dots + P^0Q_n
\end{eq}
\end{lemma}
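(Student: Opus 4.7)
The identity is a purely algebraic statement in the motivic mod-$l$ Steenrod algebra, and my plan is to prove it by induction on $n$. The key ingredients from \cite{Redpub} are the inductive description of the Milnor operations, namely $Q_0=\beta$ and (up to sign) $Q_{i+1} = [P^{l^i},Q_i] = P^{l^i}Q_i - Q_i P^{l^i}$, together with the motivic Adem relations for products $P^aP^c$.

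For $n=0$ we have $b=0$ and the identity reduces to $Q_0P^0 = P^0Q_0$, which is trivial since $P^0=\mathrm{Id}$. For $n=1$, the identity $Q_0P^1 = P^1Q_0 + P^0Q_1$ is precisely a rearrangement of the commutator definition of $Q_1$. For the inductive step from $n-1$ to $n$, write $b=b_{n-1}+l^{n-1}$ where $b_{n-1}=1+l+\dots+l^{n-2}$, and apply the motivic Adem relations to expand $P^{b_{n-1}}P^{l^{n-1}}$. The leading term is $P^b$ (with a unit binomial coefficient), and the correction terms are products $P^aP^c$ with strictly smaller leading exponent. Then I would move $Q_0$ past $P^{b_{n-1}}$ (or the relevant leading factor) using the induction hypothesis, which produces terms $P^{b_{n-1}-(l^i-1)/(l-1)}Q_i$ for $0\le i\le n-1$; commuting these $Q_i$'s past the remaining $P^{l^{n-1}}$ factor and invoking the defining relation $Q_n=[P^{l^{n-1}},Q_{n-1}]$ produces both the exponent shift from $b_{n-1}-(l^i-1)/(l-1)$ to $b-(l^i-1)/(l-1)$ and the extra term $P^0Q_n$.

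The main obstacle is the combinatorial bookkeeping: the binomial coefficients modulo $l$ coming from the Adem relations, combined with the signs in the inductive definition of $Q_i$, must conspire to produce each term $P^{b-(l^i-1)/(l-1)}Q_i$ with unit coefficient, and all other potential terms (in particular iterated $Q_iQ_j$ products or $P$-monomials of the wrong total degree) must cancel. This verification is classical in the topological Steenrod algebra (a form of Kudo's transgression relation), and since the subalgebra of the motivic Steenrod algebra generated by $\beta$ and the $P^i$'s has the same multiplicative relations as its topological counterpart by \cite{Redpub}, the classical calculation transfers directly. An alternative, and perhaps cleaner, route is to check the identity on a universal class such as the tautological generator of $H^{*,*}(K(\zz/l,1),\zz/l)$, where both sides can be computed explicitly and injectivity of the action determines the relation.
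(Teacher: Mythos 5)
Your overall strategy coincides with the paper's: both proofs first use the fact that for $l>2$ the subalgebra of the motivic Steenrod algebra generated by $\beta$ and the $P^i$ is isomorphic to the topological mod-$l$ Steenrod algebra, reducing to a classical calculation, and then both proceed by induction on $n$. Where you diverge is in the choice of inductive engine. You propose to run the induction by combining the defining relation $Q_{i+1}=[P^{l^i},Q_i]$ with the Adem relations applied to $P^{b_{n-1}}P^{l^{n-1}}$, and you rightly flag that the resulting binomial-coefficient bookkeeping is substantial: you would need to check that $P^{b}$ appears with a unit coefficient (it does, e.g.\ by Lucas), that the lower-order Adem terms either vanish or cancel, and that the signs in the iterated commutators organize themselves into exactly the stated sum with unit coefficients. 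None of that is carried out, so as written your argument has a real gap — you assert the cancellations ``conspire'' rather than proving it. The paper sidesteps precisely this bookkeeping by invoking the commutation relation for the Milnor basis from \cite[Theorem 4a]{Milnor3}: the $Q_i$ commute with the Milnor basis elements $\mathcal{P}^R$ by a single clean formula in which the shifts $(l^i-1)/(l-1)$ appear directly, so ``easy induction'' really is easy there. Your alternative suggestion of evaluating both sides on a universal class in $H^{*,*}(K(\zz/l,1),\zz/l)$ is a genuinely different route, but it too would require an injectivity statement (that the Steenrod algebra acts faithfully in the relevant bidegrees on that class) which you would need to justify. In short: right skeleton, same reduction as the paper, but you trade the clean Milnor-basis commutation for an Adem computation you don't complete, and that computation is exactly the nontrivial content of the lemma.
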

\begin{proof}
Since $l>2$ the subalgebra of the motivic Steenrod algebra generated
by operations $\beta, P^i$ is isomorphic to the usual topological
Steenrod algebra. In the topological Steenrod algebra the equation 
follows by easy induction on $n$ from the commutation relation for the
Milnor basis given in \cite[Theorem 4a]{Milnor3}.
\end{proof}
Let $\Delta^*:M(X)\oo M(X)\sr \zz(d)[2d]$ be the morphism defined by
the diagonal and
$$e_X=\Delta^*_{\cal X}:M(X)\oo M(X)\sr \zz_{\cal
X}(d)[2d]$$
the morphism which corresponds to $\Delta^*$ by \refrl. 
\begin{proposition}
\llabel{dualx} The pair $(M(X),e_X)$ is an internal
Hom-object from $M(X)$ to $\zz_{\cal X}(d)[2d]$ in $DM_{\cal X}$.
\end{proposition}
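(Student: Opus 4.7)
The plan is to reduce the statement to classical motivic Poincar\'e duality for $X$ over $k$ and then transfer the duality to $DM_{\cal X}$ using the hypothesis that $M(X)\in DM_{\cal X}$.

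Since $X$ is smooth projective of dimension $d=l^n-1$, classical motivic Poincar\'e duality gives that $(M(X),\Delta^*)$ is an internal $\ih$-object from $M(X)$ to $\zz(d)[2d]$ in $DM(k)$; equivalently, for every $N\in DM(k)$, composition with $\Delta^*$ induces a bijection
$$Hom_{DM(k)}(N,M(X))\ii Hom_{DM(k)}(N\oo M(X),\zz(d)[2d]).$$
By construction $e_X=\Delta^*_{\cal X}$ is the unique lift of $\Delta^*$ along $\zz_{\cal X}(d)[2d]\sr \zz(d)[2d]$, provided by \refrl applied to the morphism $M(X)\oo M(X)\sr \zz(d)[2d]$, which is legitimate since $M(X)\oo M(X)\in DM_{\cal X}$.

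To prove the proposition I would show that for every test object $N\in DM_{\cal X}$, composition with $e_X$ induces a bijection between $Hom_{DM_{\cal X}}(N,M(X))$ and $Hom_{DM_{\cal X}}(N\oo M(X),\zz_{\cal X}(d)[2d])$. Since $N,M(X)\in DM_{\cal X}$, the tensor product $N\oo M(X)$ is also in $DM_{\cal X}$. The core of the argument is to use \refrl to identify $Hom_{DM_{\cal X}}$-sets whose target is a relative Tate motive $\zz_{\cal X}(i)[j]$ with the corresponding $Hom_{DM(k)}$-sets whose target is $\zz(i)[j]$. Combined with a parallel identification for the source Hom-set, and the fact that $e_X$ corresponds to $\Delta^*$ under these identifications, the required bijection in $DM_{\cal X}$ reduces to the classical Poincar\'e duality.

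The hardest step will be ensuring the source-side identification $Hom_{DM_{\cal X}}(N,M(X))\cong Hom_{DM(k)}(N,M(X))$, since \refrl directly addresses morphisms into relative Tate motives rather than into $M(X)$. I would circumvent this either by expressing $M(X)$ as a suitable cone/retract of Tate-type data over $\cal X$ in the style used in Lemma \ref{lex}, or more cleanly by dualizing: the formalism of \refappmain (as exploited in the proof of Lemma \ref{M1}) should give the internal Hom structure directly once the relevant triangle is set up. Either route leverages the dualization machinery of \cite{oversub} to promote classical self-duality of smooth projective varieties to their relative incarnations inside $DM_{\cal X}$.
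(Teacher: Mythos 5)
Your high-level strategy---start from classical Poincar\'e duality for the smooth projective $X$ over $k$ and transfer it to $DM_{\cal X}$---is the right idea, and your reliance on \refrl\ for Hom-sets with relative Tate targets is correct. However, the paper handles the transfer much more economically than your proposal: its entire proof is a citation of two results from \cite{oversub}, namely \refmotdual\ and \refintres. The second of these is precisely the transfer principle for internal Hom objects between $DM_{\cal X}$ and the absolute category that resolves the ``source-side identification'' you flag as the hardest step; it is the same lemma invoked again after Theorem \ref{mrestr} to pass the duality on $M_{l-1}$ from $DM_{\cal X}$ down to $DM^{eff}_-(k)$ in Corollary \ref{mdual}.

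Your two proposed workarounds for that gap are less likely to work as stated. The motive $M(X)$ is not a Tate object over $\cal X$, so the distinguished-triangle machinery of \refunext\ and the dualization mechanism of \refappmain\ that underlie the proof of Lemma \ref{M1} do not apply to it directly: those tools are designed for objects built from relative Tate motives by triangles, and $M(X)$ enters $DM_{\cal X}$ for a different reason (it is a motive of a variety that lives over the embedded simplicial scheme). The clean move is the one the paper makes: rather than rebuild a comparison of Hom-sets with target $M(X)$, invoke the internal-Hom transfer lemma (\refintres) together with the absolute duality statement (\refmotdual) from the companion paper, which already package exactly the argument you are trying to improvise.
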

\begin{proof}
It follows from \refmotdual and \refintres.
\end{proof}
Define $D\lambda$ as the dual of $\lambda$ with respect to $e_X$ and
$e_M$. 
\begin{lemma}
\llabel{over}
There exists $c$ prime to $l$ such that the diagram 
$$
\begin{CD}
M_{l-1} @>\lambda D\lambda>> M_{l-1}\\
@VVV @VVV\\
\zz_{\cal X} @>c>> \zz_{\cal X}
\end{CD}
$$
commutes. In particular, $\lambda$ is a split epimorphism.
\end{lemma}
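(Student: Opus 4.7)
The plan is to identify the scalar $c$ with the zeroth slice of the endomorphism $\lambda D\lambda$ of $M_{l-1}$, and then transport the non-divisibility of $\lambda\tau_{\cal X}$ from Proposition \ref{nondiv1} to the top slice via duality, concluding by Lemma \ref{endo} that $c$ itself is prime to $l$.

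First I would compute the two key Hom groups: iterating the triangles (\ref{seq1n}) and (\ref{seq2n}) presents $M_{l-1}$ as an extension built from the Tate motives $\zz_{\cal X}(jb)[2jb]$ for $0\le j\le l-1$. Since motivic cohomology of $\cal X$ vanishes in negative weight, an easy induction on $i$ gives $Hom_{DM_{\cal X}}(M_{l-1},\zz_{\cal X})=\zz\cdot S^{l-1}(y)$ and $Hom_{DM_{\cal X}}(\zz_{\cal X}(d)[2d],M_{l-1})=\zz\cdot x^{l-1}$. The vertical arrows in the diagram are the canonical projections $S^{l-1}(y)$, and since $S^{l-1}(y)\circ\lambda=\pi_{\cal X}$ by Lemma \ref{lex}, the square automatically commutes for a unique integer $c$, namely the zeroth slice of $\lambda D\lambda$ (in the sense of Lemma \ref{endo}). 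Similarly, $\lambda D\lambda\circ x^{l-1}=c_{l-1}\cdot x^{l-1}$ for the top slice $c_{l-1}$.

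Next I would dualize the factorization $\pi_{\cal X}=S^{l-1}(y)\circ\lambda$ with respect to the pairings $e_X$ (Proposition \ref{dualx}) and $e_{l-1}$ (Lemma \ref{M1}) to get $\tau_{\cal X}=D\lambda\circ D(S^{l-1}(y))$, invoking the Poincar\'e-duality identity $D\pi_{\cal X}=\tau_{\cal X}$. Since $D(S^{l-1}(y))$ lies in $Hom_{DM_{\cal X}}(\zz_{\cal X}(d)[2d],M_{l-1})=\zz\cdot x^{l-1}$, it equals $\epsilon\cdot x^{l-1}$ for an integer $\epsilon$, which is nonzero because $DDS^{l-1}(y)=S^{l-1}(y)\ne 0$. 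Composing with $\lambda$ gives
$$\lambda\tau_{\cal X}=\epsilon\cdot(\lambda D\lambda\circ x^{l-1})=\epsilon c_{l-1}\cdot x^{l-1};$$
Proposition \ref{nondiv1} says the left hand side is not divisible by $l$, so both $\epsilon$ and $c_{l-1}$ are prime to $l$.

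Finally, Lemma \ref{endo} forces $c\equiv c_{l-1}\pmod{l}$, so $c$ is prime to $l$ and the required diagram commutes. Corollary \ref{simple} then shows that $\lambda D\lambda$ is an isomorphism of $M_{l-1}$, whence $D\lambda\circ(\lambda D\lambda)^{-1}$ is a section of $\lambda$ and $\lambda$ is a split epimorphism. The only subtle point is the duality identity $D\pi_{\cal X}=\tau_{\cal X}$; this should be routine from the construction of $e_X=\Delta^*_{\cal X}$ and the fact that $\tau_{\cal X}$ is the Poincar\'e dual of $\pi_{\cal X}$, but it is the one place where the specific normalizations of the pairings have to be handled with care.
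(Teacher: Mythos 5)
Your proof is correct and follows essentially the same strategy as the paper: use the weight/slice computation of the relevant Hom-groups, dualize the factorization $\pi_{\cal X}=S^{l-1}(y)\circ\lambda$ to reduce to the non-divisibility of $\lambda\tau_{\cal X}$ from Proposition \ref{nondiv1}. The only real variation is that you route the primality of $c$ through Lemma \ref{endo} (comparing the zeroth and top slices of $\lambda D\lambda$) and hedge the duality normalization with the auxiliary scalar $\epsilon$, whereas the paper works directly with the identities $\pi_{\cal X}=D\tau_{\cal X}$, $S^{l-1}(y)=DS^{l-1}(x)$ and concludes from the single dualized square; both are sound.
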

\begin{proof}
We will show that there is $c$ such that the diagram
\begin{eq}\llabel{twosquares}
\begin{CD}
M_{l-1} @>D\lambda>> M(X) @>\lambda>> M_{l-1}\\
@VVS^{l-1}(y)V @VV\pi_{\cal X}V @VVS^{l-1}(y)V\\
\zz_{\cal X} @>c>> \zz_{\cal X} @>Id>> \zz_{\cal X}
\end{CD}
\end{eq}
commutes. Since the right hand side square commutes by definition of
$\lambda$ we only have to consider the left hand side square. Onserve
first that 
$$\pi_{\cal X}=D\tau_{\cal X}.$$
On the other hand
$$S^{l-1}(y)=DS^{l-1}(x)$$
Using the fact that $D(gf)=D(f)D(g)$ we see that to show that the left
hand side square commutes it is enough to show that there is $c$ prime
to $l$ such that the square
$$
\begin{CD}
\zz_{\cal X}(d)[2d] @>c>> \zz_{\cal X}(d)[2d]\\
@V\tau_{\cal X}VV @VVS^{l-1}(x)V\\
M(X)@>\lambda>> M_{l-1}
\end{CD}
$$
commutes. The fact that there exists $c\in\zz$ which makes this
diagram commutative follows immediately from the distinguished
triangles (\ref{seq1}) and the fact that Tate objects of higher weight
admit no nontrivial morphisms to Tate objects of lower weight. The
fact that $c$ must be prime to $l$ follows from Proposition
\ref{nondiv1}.
\end{proof}
Combining Lemma \ref{over} with Corollary \ref{simple} we conclude
that $\lambda D\lambda$ is an isomorphism. Let $\phi$ be its
inverse. Then the composition 
$$p:D\lambda\circ \phi\circ \lambda:M(X)\sr
M(X)$$
is a projector i.e. $p^2=p$ and its image is $M_{l-1}$. We conclude
that $M_{l-1}$ is a direct summand of $M(X)$. Together with 
\refspcase this implies the following important result.
\begin{theorem}
\llabel{mrestr}
The motive $M_{l-1}$ is restricted.
\end{theorem}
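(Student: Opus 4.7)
The proof will be essentially immediate from the preceding discussion combined with the cited lemma \refspcase. The plan is the following.

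First, I would observe that the construction just preceding the theorem already does all the real work. Namely, Lemma \ref{over} produces a morphism $\lambda D\lambda : M_{l-1} \to M_{l-1}$ sitting over multiplication by an integer $c$ prime to $l$ on $\zz_{\cal X}$. By Corollary \ref{simple} any such endomorphism of $M_{l-1}$ is automatically an isomorphism, so $\lambda D\lambda$ is invertible, with inverse $\phi$ say. Then the composition $p = D\lambda \circ \phi \circ \lambda : M(X) \to M(X)$ satisfies $p^2 = p$ and its image is (canonically identified with) $M_{l-1}$. Thus $M_{l-1}$ is exhibited as a direct summand of $M(X)$ inside $DM_{\cal X}$.

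Second, I would invoke the hypothesis on $X$: by assumption $X$ is a smooth projective $\nu_n$-variety with $M(X) \in DM_{\cal X}$. In particular $M(X)$ is the motive of a smooth projective variety whose motive lies over $\cal X$, which is precisely the setting in which \refspcase applies to say that any direct summand of such an $M(X)$ in $DM_{\cal X}$ is restricted.

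Combining these two points, $M_{l-1}$, being such a direct summand, is restricted, which proves the theorem. The only step where there is anything to verify is that $p$ really does cut out $M_{l-1}$ as a summand, i.e.\ that $\lambda D \lambda$ is an isomorphism, and this is precisely what Lemma \ref{over} together with Corollary \ref{simple} give. I do not expect any serious obstacle here; the main obstacle was already dealt with in Proposition \ref{nondiv1} (non-divisibility of $\lambda \tau_{\cal X}$ by $l$), which fed into Lemma \ref{over} to produce the integer $c$ prime to $l$. Given that, the present theorem is a one-line corollary of the construction together with the black box \refspcase.
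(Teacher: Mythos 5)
Your proposal is correct and follows the same route as the paper: Lemma \ref{over} plus Corollary \ref{simple} make $\lambda D\lambda$ invertible, so $p = D\lambda \circ (\lambda D\lambda)^{-1} \circ \lambda$ is a projector exhibiting $M_{l-1}$ as a direct summand of $M(X)$, and \refspcase then gives that $M_{l-1}$ is restricted. This matches the paper's argument, which is stated in the paragraph immediately preceding the theorem.
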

Combining Theorem \ref{mrestr} with Lemmas \ref{M1} and \refintres
we get the following duality theorem for $M_{l-1}$.
\begin{cor}
\llabel{mdual}
Let $e_M'$ be the composition
$$M_{l-1}\oo M_{l-1}\stackrel{e_M}{\sr} \zz_{\cal X}(d)[2d]\sr
\zz(d)[2d]$$
Then $(M_{l-1},e'_M)$ is an internal Hom-object from $M_{l-1}$ to
$\zz(d)[2d]$ in the category $DM^{eff}_{-}(k)$.
\end{cor}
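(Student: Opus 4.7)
The plan is to combine three already established ingredients: Lemma \ref{M1} (applied to $i=l-1$), Theorem \ref{mrestr}, and \refintres. The role of the first is to identify $(M_{l-1},e_M)$ as an internal Hom-object from $M_{l-1}$ to $\zz_{\cal X}(d)[2d]$ in the relative category $DM_{\cal X}$; the role of the second is to verify the restrictedness hypothesis which lets one pass from $DM_{\cal X}$ down to $DM^{eff}_{-}(k)$; the role of the third is to carry out that descent.

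First I would note the bookkeeping: since $d=b(l-1)$, Lemma \ref{M1} with $i=l-1$ yields immediately that $(M_{l-1},e_M)$ is an internal Hom-object from $M_{l-1}$ to $\zz_{\cal X}(d)[2d]$ in $DM_{\cal X}$. Next, Theorem \ref{mrestr} tells us that $M_{l-1}$ is restricted, which is precisely the hypothesis under which \refintres allows one to convert an internal Hom identification in $DM_{\cal X}$ into the corresponding internal Hom identification in $DM^{eff}_{-}(k)$, provided the target is a Tate object already defined over $k$. The target $\zz_{\cal X}(d)[2d]$ is the pull-back of $\zz(d)[2d]$, and the composition defining $e'_M$ is obtained by following $e_M$ with the canonical morphism $\zz_{\cal X}(d)[2d]\sr \zz(d)[2d]$, so the evaluation morphism is exactly the one produced by \refintres.

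I do not expect any genuine obstacle: the proof is a short invocation of cited material, and both of its non-trivial inputs are already in place. The only substantive point is to check that \refintres applies as stated, which reduces to confirming that $M_{l-1}$ is restricted (supplied by Theorem \ref{mrestr}) and that the target $\zz(d)[2d]$ is pulled back to $\zz_{\cal X}(d)[2d]$ (which is immediate). The real work has already been done in establishing restrictedness via the motivic degree theorem and the splitting of $M_{l-1}$ off $M(X)$; given that, Corollary \ref{mdual} is essentially a formal consequence.
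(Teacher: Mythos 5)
Your proposal matches the paper's argument exactly: the paper proves this corollary in one line by ``Combining Theorem \ref{mrestr} with Lemmas \ref{M1} and \refintres,'' which are precisely the three ingredients you invoke in the same logical order (internal Hom in $DM_{\cal X}$ from Lemma \ref{M1}, restrictedness from Theorem \ref{mrestr}, descent to $DM^{eff}_{-}(k)$ via \refintres). Your write-up simply spells out the bookkeeping that the paper leaves implicit.
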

\begin{proposition}
\llabel{universal}
Under the assumptions of this section one has 
$$M({\cal X})\cong M(\check{C}(X))$$
where the motives are considered with $\zz_{(l)}$-coefficients.
\end{proposition}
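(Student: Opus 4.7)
The plan is to show that the canonical morphism
$$g:M(\check{C}(X))\sr M({\cal X})$$
induced by the inclusion of $X$ in the class of smooth varieties defining the embedded simplicial scheme $\cal X$ is an isomorphism in $DM(k,\zz_{(l)})$. Writing $C=cone(g)$, it suffices to show $C=0$.

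A first reduction shows $C\oo M(X)=0$. Since $X$ is a zero-simplex of $\check{C}(X)$ and belongs to the class defining $\cal X$, the projections $\check{C}(X)\times X\sr X$ and ${\cal X}\times X\sr X$ are both weak equivalences of simplicial sheaves: the first because the Čech complex is contractible over its own base (via the diagonal section), and the second because $\cal X$ corresponds to a subsheaf of the one-point sheaf that is already hit by $X$ together with the first closure property of the class. Under these identifications $g\oo\mathrm{Id}_{M(X)}$ becomes the identity of $M(X)$, whence $C\oo M(X)=0$.

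Next, by Lemma~\ref{over} combined with Corollary~\ref{simple}, the endomorphism $\lambda\circ D\lambda$ of $M_{l-1}$ is an isomorphism, so that $p=D\lambda\circ\phi\circ\lambda$ is an idempotent on $M(X)$ with image $M_{l-1}$. Hence $M_{l-1}$ is a direct summand of $M(X)$, and the previous step yields $C\oo M_{l-1}=0$.

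The main obstacle is the final step: extracting $C=0$ from $C\oo M_{l-1}=0$. Here $C$ lies in $DM_{\cal X}$ (since both the source and target of $g$ do, using $\check{C}(X)\times{\cal X}=\check{C}(X)$ and ${\cal X}\times{\cal X}={\cal X}$). Invoking the self-duality of $M_{l-1}$ from Corollary~\ref{mdual} translates $C\oo M_{l-1}=0$ into $\uu{Hom}(M_{l-1},C)=0$. Applying $\uu{Hom}(-,C)$ to the distinguished triangles (\ref{seq2n}) of Lemma~\ref{mainseq}, which present $M_{l-1}$ as an iterated extension of the invertible Tate motives $\zz_{\cal X}(jb)[2jb]$ for $j=0,\dots,l-1$, and inducting downward on $i$ from $l-1$ to $0$, one obtains a chain of identifications of the groups $\uu{Hom}(M_{i},C)$ with Tate-twisted shifts of $C$, terminating at $\uu{Hom}(\zz_{\cal X},C)=C$. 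Controlling the boundary maps in this downward induction is the delicate point: it uses that the extension class $\mu$ is $l$-torsion together with the compatibility between the two triangles (\ref{seq1n}) and (\ref{seq2n}) of Lemma~\ref{mainseq}, and ultimately forces $C=0$.
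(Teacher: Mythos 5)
Your proof is incomplete precisely at the step you yourself flag as ``the delicate point,'' and the paper's actual argument avoids this difficulty entirely by taking a different route.

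The paper does not work with the cone of $M(\check{C}(X))\sr M({\cal X})$. Instead it invokes \refwheneq, which reduces the claim to producing, for every smooth $Y$ with $M(Y)\in DM_{\cal X}$, a morphism $M(Y)\sr M(X)$ over $\zz_{\cal X}$ (i.e.\ compatible with the structure morphisms $\pi_{\cal X}$). Such a morphism is then obtained as a composition $M(Y)\sr M_{l-1}\sr M(X)$: the first arrow is supplied by Lemma~\ref{lex}, and the second arrow is $c^{-1}D\lambda$, which diagram~(\ref{twosquares}) in the proof of Lemma~\ref{over} shows lies over $\zz_{\cal X}$. This is short and requires no analysis of a cone.

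Your first two reductions, that $C\oo M(X)=0$ and hence $C\oo M_{l-1}=0$, are fine (the first is essentially \refdonotneed). But the passage from $C\oo M_{l-1}=0$ to $C=0$ is not carried out, only planned, and the plan does not obviously close. Tensoring $C$ with the triangles (\ref{seq1n}) and (\ref{seq2n}) and using $C\oo M_{l-1}=0$ produces at best a periodicity isomorphism of the shape $C\cong C(bl)[2bl+2]$, not the vanishing of $C$; turning such a periodicity into $C=0$ needs an extra convergence or boundedness argument in the ambient category, which you neither state nor prove. The appeal to ``$\mu$ is $l$-torsion'' at this point is not backed by any concrete step, and the identification $\uu{Hom}(\zz_{\cal X}(N)[j],C)\cong C(-N)[-j]$ used implicitly in the downward induction is only justified when $C$ is itself a Tate motive over $\cal X$, which is not known here. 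So there is a genuine gap in the final step; without it your argument does not prove the proposition.
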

\begin{proof}
By \refwheneq it is sufficient to show that for any smooth $Y$ in
$DM_{\cal X}$ there exists a morphism $M(Y)\sr M(X)$ over
$\zz$. Diagram (\ref{twosquares}) shows that $c^{-1}D\lambda$ is a
morphism $M_{l-1}\sr M(X)$ over $\zz$. On the other hand Lemma
\ref{lex} shows that there
is a morphism $M(Y)\sr M_{l-1}$ over $\zz$. The statement of the
proposition follows.
\end{proof}

\subsection{The Bloch-Kato conjecture}
\llabel{s5}
In this section we use the techniques developed above to prove the
following theorem.
\begin{theorem}
\llabel{BK0}
Let $k$ be a field of characteristic zero which contains a primitive
$l$-th root of unity. Then the norm residue homomorphisms
$$K_n^M(k)/l\sr H^n_{et}(k,\mu_l^{\oo n})$$
are isomorphisms for all $n$.
\end{theorem}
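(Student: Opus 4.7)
My approach is induction on $n$. The cases $n \le 1$ are classical (Kummer theory for $n=1$) and $n=2$ is the theorem of Merkurjev--Suslin. Assuming the theorem for all weights less than $n$, I plan to deduce it for weight $n$.

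The first step is to reduce the Bloch--Kato statement in weight $n$ to a motivic-cohomology vanishing statement, following the framework of \cite{MCpub}. Under the inductive hypothesis one obtains the Beilinson--Lichtenbaum comparison in weights $< n$, and a standard long-exact-sequence argument comparing the Nisnevich and \'etale topologies, combined with the fact that $\mathcal{X}_{\underline{a}}$ becomes contractible over any splitting field of $\underline{a}$, reduces the desired isomorphism in weight $n$ to the following assertion: for every pure symbol $\underline{a}=(a_1,\dots,a_n)$ the canonical obstruction class $\delta \in H^{n+1,n}(\mathcal{X}_{\underline{a}},\zz/l)$ vanishes.

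Next, I would argue by contradiction. Suppose that $\delta \ne 0$ for some pure symbol $\underline{a}$. By Theorem \ref{Rost2} there exists a $\nu_{n-1}$-variety $X$ splitting $\underline{a}$, so that $M(X,\zz/l)\in DM_{\mathcal{X}_{\underline{a}}}$. Using the inductive hypothesis together with Lemma \ref{scomp} and the vanishing of $Q_i(\delta)$ for $i<n-1$ forced by weight considerations, I can show $Q_0 Q_1 \cdots Q_{n-1}(\delta)\ne 0$. This verifies the hypotheses of Section \ref{s4} (with $n$ there replaced by $n-1$), yielding a generalized Rost motive $M_{l-1}\in DM_{\mathcal{X}_{\underline{a}}}$ which by Theorem \ref{mrestr} is a direct summand of $M(X,\zz/l)$, is essentially self-dual by Corollary \ref{mdual}, and by Proposition \ref{universal} gives the equivalence $M(\mathcal{X}_{\underline{a}})\cong M(\check{C}(X))$ with $\zz_{(l)}$-coefficients.

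The main obstacle, and the place where the proof truly closes, is the extraction of a contradiction from this structure. The strategy is to transport $\delta$, using the duality of $M_{l-1}$ together with the identification of Proposition \ref{universal}, into a specific motivic cohomology group attached to $X$ which, by the inductive hypothesis applied over $k(X)$ (where $\underline{a}$ is trivial), must vanish. In the mod $2$ setting of \cite{MCpub} the analogous step reduces to a direct geometric computation on a Pfister quadric; for $l>2$, where splitting varieties lack such concrete models, the contradiction must instead be drawn out of the abstract duality of the generalized Rost motive and the motivic degree theorem (Theorem \ref{degree}) established in Section \ref{s3}, which plays the role that Rost's geometric splitting of Pfister motives played in the $l=2$ case.
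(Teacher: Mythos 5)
Your outline identifies the right ingredients — induction on $n$, the reduction criterion of \cite[pp.96-97]{MCpub}, Theorem \ref{Rost2}, the non-vanishing of $Q_{n-1}\cdots Q_0(\delta)$, and the generalized Rost motive of Section \ref{s4} — but both the logical shape and the closing step are misidentified, and the latter is where the proof actually lives.

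On the logical shape: you propose to reduce BK($n$) to showing that a class $\delta\in H^{n+1,n}(\mathcal{X}_{\underline{a}},\zz/l)$ \emph{vanishes}, and then to run a contradiction argument from "$\delta\ne 0$." This inverts what happens. The class $\delta$ of the paper lives in $H^{n,n-1}(\check{C}(X),\zz/l)$ (Lemma \ref{comp1}; Section \ref{s4} is applied with its "$n$" equal to $n-1$), and its \emph{non}-vanishing is what is proved, as a consequence of the hypothesis $\underline{a}\ne 0$ in $K_n^M(k)/l$ together with the inductive hypothesis and Lemma \ref{lastlemma}. This non-vanishing is the fuel for the construction, not an assumption to be contradicted. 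The quantity that has to vanish is the entire group $H^{n+1,n}({\cal X},\zz_{(l)})$ (Proposition \ref{realmain}) — $\zz_{(l)}$-coefficients, one bidegree higher than your $\delta$ — which feeds into Lemma \ref{step3} to produce the injectivity condition required by \cite{MCpub}. There is no reductio anywhere: for $\underline{a}=0$ in $K_n^M(k)/l$ there is nothing to prove, and for $\underline{a}\ne 0$ the argument is a direct chain of computations.

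On the closing step, which you explicitly leave open as "the main obstacle": your sketch — transport $\delta$ by duality into a group over $X$ which vanishes by the inductive hypothesis over $k(X)$ — does not describe what happens and would not close the proof, since the inductive hypothesis over $k(X)$ has already been consumed in setting the stage. The actual argument is Lemmas \ref{step6}–\ref{step4}: (a) the composite $Q_{n-1}\cdots Q_1$ is injective on $H^{n+2,n}(\tilde{\cal X},\zz/l)$, via Margolis-homology vanishing (Lemma \ref{margolis}) and the bound of Lemma \ref{forref}, embedding $H^{n+1,n}({\cal X},\zz_{(l)})$ into $H^{2lb+2,lb+1}({\cal X},\zz_{(l)})$; (b) the triangles (\ref{seq1n}), (\ref{seq2n}) exhibit that group as a quotient of a kernel inside $H^{2d+1,d+1}(M_{l-1},\zz_{(l)})$ (Lemma \ref{ls1}); (c) the duality of Corollary \ref{mdual} rewrites this kernel as $\ker(Hom(\zz_{(l)},M_{l-1}(1)[1])\to Hom(\zz_{(l)},\zz_{(l)}(1)[1]))$ (Lemma \ref{dualstep}); and (d), decisively, the exactness of $H_{-1,-1}(X\times X)\to H_{-1,-1}(X)\to k^*$, which is condition (2) of Theorem \ref{Rost2}, read through Lemma \ref{isuniversal} and the motivic-homology spectral sequence of $\check{C}(X)$, forces this kernel to vanish (Lemma \ref{step4}). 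Step (d) is exactly the content of Theorem \ref{Rost2} beyond the mere existence of a splitting $\nu_{\le(n-1)}$-variety; your proposal has no substitute for it, and without it the argument does not close.

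One smaller slip: $Q_{n-1}\cdots Q_0(\delta)\ne 0$ (Lemma \ref{step2}) is proved from Lemma \ref{margolis} and Lemma \ref{forref}, not from Lemma \ref{scomp}; the Milnor-basis relation of Lemma \ref{scomp} is used in Proposition \ref{nondiv1} to identify $Q_n(\mu)$ with $\beta P^b(\mu)$, not here.
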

In the next section we will extend this theorem to all fields of 
characteristic not equal to $l$. The statement of Theorem \ref{BK0} is
know as the Bloch-Kato conjecture (see \cite{MCpub}).

As was shown in \cite[pp.96-97]{MCpub}, in order to prove Theorem \ref{BK0} it is
sufficient to construct for any $k$ of characteristic zero and any
sequence of invertible elements $\uu{a}=(a_1,\dots,a_n)$ of $k$, a
field extension $K_{\uu{a}}$ of $k$ such that the following two
conditions hold:
\begin{enumerate}
\item the image of $\uu{a}$ in $K_n^M(K_{\uu{a}})$ is divisible by
$l$,
\item the homomorphism of the Lichtenbaum (etale) motivic cohomology
groups
$$H^{n+1,n}_{et}(K,\zz_{(l)})\sr
H^{n+1,n}_{et}(K_{\uu{a}},\zz_{(l)})$$
is a monomorphism.
\end{enumerate}
We say that a smooth connected scheme $X$ splits $\uu{a}$ modulo $l$
if $\uu{a}$ becomes zero in $K_n^M(k(X))/l$ where $k(X)$ is the
function field of $X$. We use
the notation $H_{-1,-1}(X,\zz)$ for the motivic homology group
$$H_{-1,-1}(X,\zz)=Hom_{DM}(\zz,M(X)(1)[1])$$
For $X=Spec(k)$ this group is $k^*$ and for a general $X$ it has a
description in terms of cycles with coefficients in $K^M_*$. If $X$ is
smooth projective of dimension $d$ over a field of characteristic zero
then the motivic duality theorem implies that
$$H_{-1,-1}(X,\zz)=H^{2d+1,d+1}(X,\zz)$$
\begin{definition}
\llabel{lenu} A smooth projective variety $X$ over $k$ is called a
$\nu_{\le n}$-variety if $X$ is a $\nu_n$-variety and for all $i< n$
there exists a $\nu_i$-variety $X_i$ and a morphism $X_i\sr X$.
\end{definition}
It seems likely that the following conjecture holds.
\begin{conjecture}
\llabel{lessiseq}
Any $\nu_n$-variety is a $\nu_{\le n}$-variety.
\end{conjecture}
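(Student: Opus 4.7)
The plan is to attempt the conjecture by induction on $n$, with the essential step being: every $\nu_n$-variety $X$ with $n \ge 1$ admits a morphism $Y \to X$ from some $\nu_{n-1}$-variety $Y$. If this holds, the inductive hypothesis applied to $Y$ produces the required $\nu_i$-varieties with morphisms to $Y$, and composition with $Y \to X$ yields morphisms to $X$. Note that no nontrivial morphism $X_i \to X$ can exist with $\dim X_i > \dim X$, so the new content at each step is entirely concentrated in producing one dimension lower.

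For the essential step, my first attempt would be geometric: fix a projective embedding of $X$ and search for a suitable closed subvariety $Y \subset X$ of dimension $l^{n-1}-1$, either as a complete intersection of sufficiently generic hyperplane sections of $X$, or more subtly as a fiber of a natural projection or linear section. Iterated adjunction would then express $s_{l^{n-1}-1}(Y)$ in terms of the classes $s_j(X)$, the normal bundle contributions, and the hyperplane class, at least modulo $l$. One would attempt to arrange the geometric choice so that the leading mod-$l$ contribution agrees, up to a unit, with a truncation of $\deg(s_{l^n-1}(X))$ and therefore survives modulo $l^2$.

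The main obstacle is precisely this mod-$l^2$ control. The hypothesis $\deg(s_{l^n-1}(X)) \not\equiv 0 \pmod{l^2}$ is a condition on a single characteristic number of $X$ in top degree, and the behaviour of the $s_d$ invariants under passage to subvarieties is notoriously delicate; there is no formal reason a generic subvariety should inherit any analogous non-vanishing. A cleaner approach might pass through algebraic cobordism $\Omega^{*}(X)$: one would try to exhibit in the cobordism class of $X$ a distinguished subclass realized by a smooth projective variety $Y$ whose Milnor number $s_{l^{n-1}-1}$ has the required non-divisibility by $l^2$, using the representability of cobordism classes by smooth varieties.

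An alternative that stays closer to the framework of this paper would exploit the embedded simplicial scheme $\mathcal{X}$ associated with the class of varieties from which $X$ splits, as in Section~\ref{s3} and Section~\ref{s4}. Granting that $M(X)$ lies in $DM_{\mathcal{X}}$ and that a generalised Rost motive $M_{l-1}$ of the appropriate weight exists over $\mathcal{X}$, one could attempt to produce, at each weight $b_i=(l^i-1)/(l-1)$ for $i<n$, a Tate extension $M^{(i)}$ playing the role of $M_\mu$ at level $i$, together with a smooth projective representative admitting a morphism to $X$. The main difficulty here would be to realise these lower-level motivic constructions by genuine smooth projective $\nu_i$-varieties equipped with a morphism to $X$, rather than merely by motives or by $\nu_i$-varieties abstractly present over $k$; this representability question appears to lie at the heart of why the statement remains a conjecture.
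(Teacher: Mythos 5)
This statement is labeled as a \emph{conjecture} in the paper, and the paper offers no proof of it: the author writes only ``It seems likely that the following conjecture holds'' and proceeds to work around it (for example by invoking Theorem \ref{Rost2}, where the existence of a $\nu_{\le (n-1)}$-variety with the needed properties is established independently by Suslin--Joukhovitski, rather than deduced from the $\nu_{n-1}$-property alone). Your write-up is therefore not in competition with a proof in the paper; it is an exploration of possible strategies, and you correctly land on the conclusion that none of them closes the gap.

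Your diagnosis of the obstruction is essentially right and worth keeping in mind: the condition $\deg(s_{l^n-1}(X))\not\equiv 0\pmod{l^2}$ is a single top-degree characteristic number, and there is no known mechanism that forces a subvariety (or a cobordism representative) of the right dimension to inherit the analogous mod-$l^2$ non-vanishing. The algebraic cobordism heuristic is reasonable as motivation --- in $\Omega^{*}$ one can at least speak of classes and their Milnor numbers --- but representability by a smooth projective variety \emph{mapping to $X$} (not merely existing over $k$) is exactly the extra structure the definition of $\nu_{\le n}$ demands, and nothing in the cobordism formalism produces such a morphism. Similarly, the generalized Rost motive machinery of Sections \ref{s3}--\ref{s4} gives Tate-motivic objects over $\mathcal{X}$ but does not, by itself, geometrize them as $\nu_i$-varieties over $X$. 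So your final paragraph identifies precisely where the difficulty sits. The one thing I would flag as an overstatement is the phrase ``the new content at each step is entirely concentrated in producing one dimension lower'': that reduction by induction is valid only if you have already secured the inductive hypothesis for $Y$, which is again the full conjecture in lower degree, so the inductive packaging does not actually isolate a smaller problem. In short: there is no proof to compare against, your proposal does not (and does not claim to) supply one, and your assessment of why it is hard is accurate.
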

A key point in our proof of Theorem \ref{BK0}
is the following result announced by Markus Rost and proved in \cite{SJ}.
\begin{theorem}
\llabel{Rost2} For any $\uu{a}=(a_1,\dots,a_n)$ there exists a
$\nu_{\le (n-1)}$-variety $X$ such that:
\begin{enumerate}
\item $X$ splits $\uu{a}$
\item the sequence
$$H_{-1,-1}(X\times X,\zz)\stackrel{pr_1-pr_2}{\longrightarrow}
H_{-1,-1}(X,\zz)\sr k^*$$
is exact.
\end{enumerate}
\end{theorem}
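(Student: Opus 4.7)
The plan is to construct $X$ by induction on the length $n$, following Rost's construction of norm varieties, and then verify each of the required properties separately. For the base case $n=1$, one can take $X = \operatorname{Spec}(k[t]/(t^l - a_1))$ (using that $k$ contains a primitive $l$-th root of unity in our applications); this is a zero-dimensional $\nu_0$-variety by direct computation of $s_0$, it splits $(a_1)$ trivially since $a_1$ becomes an $l$-th power, and the required exact sequence reduces to the well-known exactness of $k(X)^* \otimes k(X)^* \to k(X)^* \to k^*$ coming from the norm map.

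For the inductive step, given a $\nu_{\le(n-2)}$-variety $Y$ for $(a_1,\dots,a_{n-1})$, I would build $X$ as a projective bundle-like construction over $Y$ whose generic fiber is a Severi--Brauer-type variety designed so that $a_n$ becomes divisible by $l$ after base change to $k(Y)$. The resulting $X$ has dimension $l^{n-1}-1$, and I would then check:
\begin{enumerate}
\item $X$ splits $\underline{a}$ modulo $l$, which over $k(Y)$ reduces by induction to splitting $(a_n)$, built into the generic fiber;
\item the Milnor characteristic number $s_{l^{n-1}-1}(X)$ is not divisible by $l^2$, via a normal bundle / Landweber--Novikov computation in the spirit of \cite[Th. 14.2]{Redpub} applied to the explicit tangent structure of the norm construction;
\item for each $i < n-1$ there is a $\nu_i$-variety mapping to $X$, obtained by taking the inductive $\nu_i$-varieties over $Y$ and composing with the structure map $X \to Y$.
\end{enumerate}

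The hard part is the exact sequence
$$H_{-1,-1}(X \times X, \zz) \xrightarrow{pr_1 - pr_2} H_{-1,-1}(X, \zz) \to k^*.$$
My approach would be to reinterpret $H_{-1,-1}(X,\zz)$ in terms of Rost's cycle modules as the group $A_0(X, K_1^M)$ of zero-cycles with $K_1^M$-coefficients, under which the map to $k^*$ becomes the norm map on the Gersten resolution. The exactness then asserts that any element in the kernel of the norm is realized as a difference of two pull-backs of a cycle on $X \times X$. I would prove this by combining the generic splitting property (that $X$ acquires a rational point over $k(X)$, making the specialization map $A_0(X_{k(X)}, K_1^M) \to K_1^M(k(X)) = k(X)^*$ well-behaved) with a chase on the Gersten complex, reducing to a statement about $R$-equivalence classes on $X$.

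The main obstacle is unquestionably this last exact sequence, which is Rost's \emph{multiplication principle}; verifying it requires a delicate analysis of the interaction between the norm residue map and zero-cycles on $X\times X$, together with the fact that the construction of $X$ is compatible with transfers at each inductive step. This is the technical heart of \cite{SJ} and depends on the specific geometry of the norm varieties produced by Rost's construction, not merely on their numerical invariants; an arbitrary $\nu_{\le(n-1)}$-variety that splits $\underline{a}$ need not satisfy it.
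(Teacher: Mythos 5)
The paper does not prove Theorem \ref{Rost2}; it states explicitly that this is a ``result announced by Markus Rost and proved in \cite{SJ}'' and simply cites it as an external input. There is therefore no in-paper proof to compare against, and any comparison must be with what is known about the Suslin--Joukhovitski / Rost argument.

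Taken as a sketch of that external argument, your proposal identifies the right landmarks: the inductive construction of norm varieties, the need to check the Milnor number condition at each stage, the compatibility of the $\nu_{\le(n-1)}$ condition with the tower, and the fact that the real content is the exactness of the sequence in item 2 (Rost's multiplication principle, which itself rests on the chain lemma). You also correctly flag that this exactness is a property of the specific norm varieties built by the construction, not of an arbitrary $\nu_{\le(n-1)}$-splitting variety. Those are the right emphases. However, the inductive step as you describe it (``a projective bundle-like construction over $Y$ whose generic fiber is a Severi--Brauer-type variety'') is not Rost's construction: one does not build $X$ as a fibration over the previous norm variety $Y$. The actual construction takes the $l$-th symmetric power of $Y \times \operatorname{Spec}(k[\sqrt[l]{a_n}])$ (or of $Y$ itself twisted by $a_n$), and then performs a delicate resolution of singularities to obtain a smooth projective model; the dimension count $l^{n-1}-1$ and the Milnor number computation both depend on the specific geometry of that symmetric-power model. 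A Severi--Brauer fibration over $Y$ would in general have the wrong dimension and would not obviously satisfy the multiplication principle. So while your outline captures the logical architecture, the actual geometric construction in the inductive step is essentially different from what you propose, and the verification of the exact sequence and of the $\nu_{n-1}$-condition are both substantial arguments that your sketch leaves open rather than reduces to prior input, as you yourself concede. For the purposes of this paper, none of this matters: the theorem is cited, not proved, and one should simply invoke \cite{SJ}.
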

In order to prove Theorem \ref{BK0} we will show that for any $X$
satisfying the conditions of Theorem \ref{Rost2} the homomorphism
$$H^{n+1,n}_{et}(k,\zz_{(l)})\sr H^{n+1,n}_{et}(k(X),\zz_{(l)})$$
is injective. We will have to assume during the proof that Theorem
\ref{BK0} holds in degrees $\le (n-1)$.
\begin{lemma}
\llabel{lastlemma} Assume that Theorem \ref{BK0} holds in degrees $\le
n-1$ and the $\uu{a}=(a_1,\dots,a_n)$ is a symbol which is not zero in
$K_n^M(k)/l$. Then the image of $\uu{a}$ in $H^n_{et}(k,\mu_l^{\oo
n})$ is not zero.
\end{lemma}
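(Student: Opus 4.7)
The plan is to prove the contrapositive: assuming the image of $\uu{a}$ in $H^n_{et}(k,\mu_l^{\otimes n})$ vanishes, I will deduce that $\uu{a} = 0$ in $K^M_n(k)/l$. Without loss of generality $a_n$ is not an $l$-th power in $k$ (otherwise $\uu{a}=0$ trivially), so $L = k(\sqrt[l]{a_n})$ is a cyclic Galois extension of degree $l$ with $(a_n)_L = 0$ in $L^*/L^{*l}$, and in particular $\uu{a}_L = 0$ in $K^M_n(L)/l$ at once.

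First, I would invoke the inductive hypothesis (Theorem \ref{BK0} in degrees $\leq n-1$) to identify $K^M_{n-1}(F)/l \cong H^{n-1}_{et}(F, \mu_l^{\otimes (n-1)})$ for $F = k$ and $F = L$, compatibly with restriction and corestriction. Combined with the Hochschild--Serre spectral sequence for $L/k$ (which simplifies considerably since $\mathrm{Gal}(L/k)$ is cyclic of prime order) together with the projection formula, the etale vanishing of $\uu{a}$ over $k$ should translate into the statement that $(a_1,\dots,a_{n-1})_L$ lies in the image of $1-\sigma_*$ on $K^M_{n-1}(L)/l$, where $\sigma$ generates $\mathrm{Gal}(L/k)$. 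By the Hilbert 90 exact sequence for Milnor K-theory at cyclic degree-$l$ extensions (Bass--Tate in low degrees, Suslin in general), this yields a congruence $(a_1,\dots,a_{n-1}) \equiv (a_n) \cdot \uu{c} \pmod l$ in $K^M_{n-1}(k)$ for some $\uu{c} \in K^M_{n-2}(k)$.

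Substituting into $\uu{a} = (a_1,\dots,a_{n-1})\cdot(a_n)$ gives $\uu{a} \equiv \uu{c}\cdot(a_n)(a_n) \pmod l$. Since $k$ contains a primitive $l$-th root of unity and $l$ is odd (the case $l=2$ being already covered by \cite{MCpub}), $-1$ is an $l$-th power in $k$, so the Steinberg relation $(a_n)(-a_n)=0$ gives $(a_n)(a_n) = -(a_n)(-1) \equiv 0$ in $K^M_2(k)/l$. Hence $\uu{a} \equiv 0$ in $K^M_n(k)/l$, as required.

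The main obstacle is the second paragraph: rigorously deriving the Milnor K-theoretic congruence from the etale vanishing of $\uu{a}$. This requires the Hilbert 90 exact sequence for Milnor K-theory at cyclic degree-$l$ extensions and, crucially, its compatibility with the norm residue homomorphism furnished by the inductive hypothesis --- specifically, the commutativity of corestriction and of the cup-product structure with the identification $K^M_{n-1}/l \cong H^{n-1}_{et}(-,\mu_l^{\otimes(n-1)})$. Once this dictionary between the two cohomology theories is set up in weight $n-1$, the rest of the argument is a routine manipulation in Milnor K-theory.
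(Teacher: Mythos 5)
Your approach via the contrapositive and the choice of the cyclic degree-$l$ extension $L=k(\sqrt[l]{a_n})$ match the paper's setup, but the key intermediate step is wrong, and the error propagates. From $\gamma\cup\alpha=0$ in $H^n_{et}(k,\mu_l^{\oo n})$ (where $\gamma=(a_1,\dots,a_{n-1})$ and $\alpha=(a_n)$), the exact sequence for cyclic extensions (the paper's \cite[Prop.\ 5.2]{MCpub}, i.e.\ the collapse of Hochschild--Serre) gives that $\gamma$ lies in the image of the \emph{corestriction} $N_{L/k}\colon H^{n-1}_{et}(L)\to H^{n-1}_{et}(k)$. You instead assert that the \emph{restriction} $(a_1,\dots,a_{n-1})_L$ lies in $\mathrm{im}(1-\sigma)$ on $K^M_{n-1}(L)/l$. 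That statement is indeed implied, but for a trivial reason: in $\mathbb{F}_l[\zz/l]$ one has $N=(1-\sigma)^{l-1}$, so $\mathrm{res}\circ N_{L/k}$ always lands in $\mathrm{im}(1-\sigma)$; it carries no information. The next step, passing from this to $(a_1,\dots,a_{n-1})\equiv\{a_n\}\cdot\uu{c}\pmod l$, would require $(a_1,\dots,a_{n-1})_L=0$ in $K^M_{n-1}(L)/l$ (that is the hypothesis of the Hilbert-90--type characterization of $\ker(\mathrm{res})$), which is a different and unestablished fact; being in $\mathrm{im}(1-\sigma)$ is much weaker than vanishing. So the congruence you feed into the $\{a_n\}^2\equiv 0$ computation is not justified.

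The paper's proof is shorter and bypasses Hilbert 90 for Milnor K-theory altogether. From $\gamma=N_{E/k}(\gamma')$ on the étale side, the inductive hypothesis identifies $H^{n-1}_{et}(-,\mu_l^{\oo(n-1)})$ with $K^M_{n-1}/l$ compatibly with norms, so $(a_1,\dots,a_{n-1})$ is a norm from $K^M_{n-1}(E)/l$; then the projection formula gives $\uu{a}=(a_1,\dots,a_{n-1})\cdot\{a_n\}=N_{E/k}(\gamma''\cdot\{a_n\}_E)\equiv 0$, since $\{a_n\}_E=l\{\sqrt[l]{a_n}\}$ dies mod $l$. This works uniformly for all $l$ and avoids both the Hilbert-90 machinery and the parity-of-$l$ case split your $\{a_n\}^2$ manipulation introduces. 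If you want to salvage your route, you would need to actually show $(a_1,\dots,a_{n-1})_L=0$ in $K^M_{n-1}(L)/l$, which does not follow from the vanishing of $\uu{a}$ in $H^n_{et}(k)$; the norm-image conclusion is the correct one to extract.
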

\begin{proof}
By standard transfer argument it is enough to prove the lemma for
fields $k$ which have no extensions of degree prime to $l$. In
particular $\mu_l\cong \zz/l$.  We proceed by induction on $n$. We
know the statement for $n=1$. Let 
$$E=k[t]/(t^l=a_n)$$
be the cyclic extension of
degree $l$ corresponding to $a_n$ and $\alpha$ the class in
$H^{1}_{et}$ corresponding to $a_n$. Let $\gamma$ be the image of
$(a_1,\dots,a_{n-1})$ in $H^{n-1}_{et}$. By induction we may assume
that $\gamma\ne 0$. By \cite[Proposition 5.2]{MCpub} we have an exact
sequence
$$H^{n-1}_{et}(E,\zz/l)\stackrel{N_{E/k}}{\sr}
H^{n-1}_{et}(k,\zz/l)\stackrel{\alpha}{\sr}
H^{n}_{et}(k,\zz/l)\sr H^{n}_{et}(E,\zz/l)$$
and therefore if $\gamma\alpha=0$ then $\gamma=N_{E/k}(\gamma')$. In
the weight $n-1$ etale cohomology are isomorphic to the Milnor
K-theory by our assumption. Therefore $(a_1,\dots,a_{n-1})$ is the
norm of an element in $K_{n-1}^M(E)$ and we conclude that 
$$(a_1,\dots,a_{n-1},a_n)=(a_1,\dots,a_{n-1})\wedge (a_n)=0$$
\end{proof}
\begin{lemma}
\llabel{comp1} Assume that Theorem \ref{BK0} holds in degrees $\le
(n-1)$, $\uu{a}$ is not zero in $K_n^M(k)/l$ and $X$ is a disjoint
union of smooth schemes such that each component of $X$ splits
$\uu{a}$. Then there exists a non-zero element $\delta$ in
$H^{n,n-1}(\check{C}(X),\zz/l)$.
\end{lemma}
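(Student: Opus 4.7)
The plan is to reduce to the case $\mu_l\subset k$ and then produce $\delta$ from $\uu{a}$ via a twist of etale coefficients. First, by a standard prime-to-$l$ norm argument (as in the proof of Lemma \ref{lastlemma}), I would replace $k$ by a field extension having no finite extensions of degree prime to $l$. This preserves the hypotheses of the lemma ($\uu{a}$ remains nonzero in $K^M_n/l$ and $X$ still splits it componentwise) and guarantees that $\mu_l\subset k$.

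Next, I would establish the identification announced in the first remark of Section \ref{s4}: under the hypothesis that BK holds in degrees $\le n-1$,
$$H^{n,n-1}(\check{C}(X),\zz/l)\cong\bigcap_{\alpha}\ker(H^n_{et}(k,\mu_l^{\oo (n-1)})\sr H^n_{et}(k(X_{\alpha}),\mu_l^{\oo (n-1)})).$$
The Beilinson--Lichtenbaum equivalence in weight $\le n-1$ (which is implied by BK in those weights) identifies $\zz/l(n-1)$ with $\tau_{\le n-1}R\pi_*\mu_l^{\oo (n-1)}$, where $\pi$ is the change of topology from the etale to the Nisnevich site. Since $X$ is a nonempty etale cover of $Spec(k)$, one has $\check{C}(X)\simeq Spec(k)$ in the etale topology, so $H^n_{et}(\check{C}(X),\mu_l^{\oo (n-1)})=H^n_{et}(k,\mu_l^{\oo (n-1)})$. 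A short analysis of the long exact sequence for the truncation triangle, combined with the description of the Nisnevich sheaf $R^n\pi_*\mu_l^{\oo (n-1)}$ evaluated on the zero term $X=\bigsqcup X_{\alpha}$ in terms of the etale cohomology of the function fields $k(X_{\alpha})$, pins down the image of the motivic-to-etale map as the stated intersection of kernels.

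Finally, I would exploit that $\mu_l\subset k$: a choice of generator $\zeta\in\mu_l(k)$ produces a Galois-equivariant isomorphism $\mu_l^{\oo n}\cong\mu_l^{\oo (n-1)}$, and hence an isomorphism $H^*_{et}(F,\mu_l^{\oo n})\cong H^*_{et}(F,\mu_l^{\oo (n-1)})$ natural in the field $F$. Applied to the norm-residue image $h(\uu{a})\in H^n_{et}(k,\mu_l^{\oo n})$, this produces an element $\delta'\in H^n_{et}(k,\mu_l^{\oo (n-1)})$. By Lemma \ref{lastlemma} (which uses BK in degrees $\le n-1$), $h(\uu{a})$ is nonzero, and so is $\delta'$; by naturality of the norm residue and the hypothesis that each $X_{\alpha}$ splits $\uu{a}$, the restrictions $\delta'|_{k(X_{\alpha})}$ vanish. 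Thus $\delta'$ lies in the intersection and corresponds, under the identification above, to a nonzero class $\delta\in H^{n,n-1}(\check{C}(X),\zz/l)$.

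The main obstacle is the second step, namely the identification of $H^{n,n-1}(\check{C}(X),\zz/l)$ with the intersection of etale-cohomology kernels. It requires a careful application of Beilinson--Lichtenbaum to simplicial schemes and the computation of the relevant higher direct image Nisnevich sheaf on $X$ in terms of the function fields $k(X_{\alpha})$; once that is in place, the remainder of the argument is a routine twist-and-verify.
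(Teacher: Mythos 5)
Your steps 2 and 3 reproduce the paper's argument: use Beilinson--Lichtenbaum in weight $\le n-1$ to identify $H^{n,n-1}(\check{C}(X),\zz/l)$ with $\ker\bigl(H^n_{et}(k,\mu_l^{\oo(n-1)})\sr\prod_\alpha H^n_{et}(k(X_\alpha),\mu_l^{\oo(n-1)})\bigr)$ (via the truncation triangle for $B/l(n-1)$, the identity $H^n_{et}(\check{C}(X),-)=H^n_{et}(k,-)$, and the inclusion of $H^0$ of the simplicial scheme into the product of $H^0$ over the generic points of the components of the zero term); then land a nonzero element in that kernel by pushing $h(\uu{a})\in H^n_{et}(k,\mu_l^{\oo n})$ through the untwisting isomorphism $\mu_l^{\oo n}\cong\mu_l^{\oo(n-1)}$ and invoking Lemma \ref{lastlemma} together with the splitting hypothesis. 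That is exactly what the paper does.

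Your first step, however, is both superfluous and incomplete. It is superfluous because $\mu_l\subset k$ is already a standing hypothesis of Section \ref{s5}: Theorem \ref{BK0} assumes that $k$ contains a primitive $l$-th root of unity, and the paper's proof of this very lemma recalls that assumption before performing the untwist. If one tried to dispense with that hypothesis, your step 1 would leave a real gap: you produce a nonzero class $\delta'$ over a large prime-to-$l$ extension (or over $E=k(\mu_l)$) and never descend it to $k$. Moreover the naive descent is obstructed for the class you build: since the $\zeta$-untwist $\mu_l^{\oo n}\sr\mu_l^{\oo(n-1)}$ is equivariant only up to $\chi^{-1}$ (the inverse mod-$l$ cyclotomic character), your $\delta'$ satisfies $\sigma(\delta')=\chi(\sigma)^{-1}\delta'$ for $\sigma\in Gal(E/k)$, so $\mathrm{res}_{k\to E}\,\mathrm{cores}_{E/k}(\delta')=\bigl(\sum_{\sigma}\chi(\sigma)^{-1}\bigr)\delta'=0$ whenever $[E:k]>1$, and by injectivity of $\mathrm{res}$ the corestriction itself vanishes. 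So the ``standard prime-to-$l$ norm argument'' produces $0$, not the element you want. Drop step 1 and the argument coincides with the paper's.
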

\begin{proof}
Since we assumed the Bloch-Kato conjecture in weight $\le (n-1)$ we
know by \cite{MCpub} that 
$$H^{*,n-1}(-,\zz/l)={\bf H}^{*}_{Nis}(-,B/l(n-1))$$
where $B/l(n-1)$ is the truncation $\tau^{\le(n-1)}$ of the total
direct image of the sheaf $\mu_l^{\oo(n-1)}$ from the etale to the
Nisnevich topology. In particular for any ${\cal X}$ one has
$$H^{n,n-1}({\cal X},\zz/l)=ker(H^n_{et}({\cal X},\mu_l^{\oo(n-1)})\sr
H^0({\cal X}, \uu{H}^n_{et}({\cal X},\mu_l^{\oo(n-1)})))$$
where $\uu{H}^n_{et}$ is the Nisnevich sheaf associated with the
presheaf $H^{n}_{et}$. For a simplicial scheme $\cal X$ and any
sheaf $F$ we have $H^0({\cal X},F)\subset H^0({\cal X}_0,F)$ where
${\cal X}_0$ is the sero term of $\cal X$. If ${\cal X}_0$ is a
disjoint union of smooth schemes and $F$ is a homotopy invariant
Nisnevich sheaf with transfers we further have
$$H^0({\cal X}_0,F)\subset \prod_{\alpha} H^0(Spec(k(X_{\alpha})),F)$$
where $X_{\alpha}$ are the connected components of ${\cal X}_0$.
Therefore for ${\cal X}=\check{C}(X)$ we get
$$H^{n,n-1}({\cal X},\zz/l)=ker(H^n_{et}({\cal X},\mu_l^{\oo(n-1)})\sr
\prod_{\alpha} H^n_{et}(Spec(k(X_{\alpha})),\mu_l^{\oo(n-1)}))$$
where $X_{\alpha}$ are the connected components of $X$. If $X\ne
\emptyset$ and $F$ is an etale sheaf we have (cf. the proof of \cite[Lemma
7.3]{MCpub})
$$H^{n}_{et}(\check{C},F)=H^n_{et}(Spec(k),F)$$
therefore  
$$H^{n,n-1}({\cal X},\zz/l)=$$
$$=ker(H^n_{et}(Spec(k),\mu_l^{\oo(n-1)})\sr
\prod_{\alpha} H^n_{et}(Spec(k(X_{\alpha})),\mu_l^{\oo(n-1)})).$$
Recall now that we assumed that $k$ contains a primitive $l$-th root
of unity. Therefore we can replace $\mu_l^{\oo(n-1)}$ by $\mu_l^{\oo
n}$ and we conclude that $H^{n,n-1}({\cal X},\zz/l)$ contains 
$$ker(H^n_{et}(Spec(k),\mu_l^{\oo n})\sr
\prod_{\alpha} H^n_{et}(Spec(k(X_{\alpha})),\mu_l^{\oo n}))$$
which is non zero by our condition that each $X_{\alpha}$ splits
$\uu{a}$ and Lemma \ref{lastlemma}.
\end{proof}
Set ${\cal X}=\check{C}(Y)$ where $Y$ is the disjoint union of all (up
to an isomorphism) smooth schemes which split $\uu{a}$ and let
$\tilde{\cal X}$ be the unreduced suspension of ${\cal X}$. Note that
for a smooth connected variety $X$ one has $M(X)\in DM_{\cal X}$ if
and only if $X$ splits $\uu{a}$.
\begin{lemma}
\llabel{forref}
Under the assumption that Theorem \ref{BK0} holds in weights $< n$ one
has
$$\tilde{H}^{p,q}(\tilde{\cal X},\zz/l)=0$$
for all $q\le n-1$ and $p\le q+1$.
\end{lemma}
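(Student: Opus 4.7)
The plan is to combine the cofibration sequence ${\cal X}_+\sr S^0\sr \tilde{\cal X}$ with the Beilinson-Lichtenbaum identification of motivic and etale cohomology that follows from the Bloch-Kato conjecture in weights $\le n-1$. Taking reduced motivic cohomology of the cofibration sequence yields a long exact sequence
$$H^{p-1,q}(k,\zz/l)\xr{f_1} H^{p-1,q}({\cal X},\zz/l)\sr \tilde{H}^{p,q}(\tilde{\cal X},\zz/l)\sr H^{p,q}(k,\zz/l)\xr{f_2} H^{p,q}({\cal X},\zz/l),$$
so it is enough to prove that $f_1$ is surjective and $f_2$ is injective whenever $q\le n-1$ and $p\le q+1$.

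For the range $p\le q$, I would invoke Beilinson-Lichtenbaum: under Bloch-Kato in weights $\le n-1$ one has, as in the proof of Lemma \ref{comp1}, $H^{*,q}(-,\zz/l)=\mathbb{H}^*_{Nis}(-,B/l(q))$ with $B/l(q)=\tau^{\le q}R\pi_*\mu_l^{\oo q}$, and in cohomological degrees $\le q$ this coincides with etale cohomology $H^*_{et}(-,\mu_l^{\oo q})$. Combining this with the cofinality identity $H^*_{et}(\check{C}(Y),F)=H^*_{et}(Spec(k),F)$ for non-empty $Y$ and any etale sheaf $F$ (again as used in the proof of Lemma \ref{comp1}), one sees that the pullback map $H^{r,q}(k,\zz/l)\sr H^{r,q}({\cal X},\zz/l)$ is an isomorphism for all $r\le q$. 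For $p\le q$ this applies to both $r=p-1$ and $r=p$, so $f_1$ and $f_2$ are isomorphisms, and the middle term vanishes.

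The remaining case is $p=q+1$. Here $f_1$ is the map $H^{q,q}(k,\zz/l)\sr H^{q,q}({\cal X},\zz/l)$, which is an isomorphism by the previous paragraph (applied to $r=q$). The map $f_2$ has source $H^{q+1,q}(k,\zz/l)$, which vanishes because the motivic complex $\zz/l(q)$ on $Spec(k)$ is concentrated in cohomological degrees $\le q$; hence $f_2$ is trivially injective. The only point requiring care is justifying the Beilinson-Lichtenbaum identification over the simplicial scheme ${\cal X}$ rather than over a single smooth scheme, but this follows from the quasi-isomorphism $\zz/l(q)\simeq B/l(q)$ of complexes of Nisnevich sheaves on $Sm/k$ (valid under BK in weights $\le q$) together with the description of motivic cohomology of a simplicial scheme as Nisnevich hypercohomology, so the remaining work amounts to bookkeeping.
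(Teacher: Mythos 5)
Your proof is correct and uses essentially the same ingredients as the paper's (the Beilinson--Lichtenbaum comparison under the inductive hypothesis that Theorem \ref{BK0} holds in weights $<n$, together with the \v{C}ech cofinality identity for etale cohomology); the paper simply cites \cite[Cor.\ 6.9]{MCpub} for the injection of motivic into etale cohomology of $\tilde{\cal X}$ in the relevant range and \cite[Lemma 7.3]{MCpub} for the vanishing of the etale side, whereas you unwind the same facts explicitly through the cofibration long exact sequence.
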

\begin{proof}
By \cite[Cor. 6.9]{MCpub} and our assumption that Theorem \ref{BK0} holds in
weights $< n$ we conclude that for $q\le n-1$ and $p\le q+1$ we have
$$H^{p,q}(\tilde{\cal X},\zz/l)\subset H^{p,q}_{et}(\tilde{\cal
X},\zz/l).$$
The right hand side group is zero for all $p$ and $q$ by \cite[Lemma 7.3]{MCpub}.
\end{proof}
\begin{lemma}
\llabel{step2} Let $\delta$ be as in Lemma \ref{comp1}. Then
$$Q_{n-1}\dots Q_0(\delta)\ne 0$$
\end{lemma}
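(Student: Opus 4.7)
My plan is to prove, by upward induction on $i$, that $\gamma_i := Q_i Q_{i-1} \cdots Q_0(\delta)$ is nonzero in $H^{*,*}({\cal X},\zz/l)$ for each $-1 \le i \le n-1$, the base case $\gamma_{-1}=\delta$ being Lemma \ref{comp1} and the case $i=n-1$ being the statement. The guiding idea: Margolis vanishing forces any hypothetical drop-to-zero to factor through a preimage in a bidegree that is ruled out by Lemma \ref{forref}.

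For the setup, since $X$ is a $\nu_{\le n-1}$-variety, for every $0 \le i \le n-1$ there is a $\nu_i$-variety $X_i$ admitting a morphism $X_i \to X$, and composing with the inclusion of $X$ into $Y$ places $M(X_i,\zz/l)$ in $DM_{\cal X}$. Lemma \ref{margolis} then yields $\widetilde{MH}^{*,*}_i(\tilde{\cal X},\zz/l)=0$. To pass between ${\cal X}$ and $\tilde{\cal X}$, I use the cofibration ${\cal X}_+ \to S^0 \to \tilde{\cal X}$: whenever $p>q$ both $H^{p-1,q}(\mathrm{Spec}\,k)$ and $H^{p,q}(\mathrm{Spec}\,k)$ vanish, so the connecting map is an isomorphism $H^{p-1,q}({\cal X},\zz/l) \cong \tilde{H}^{p,q}(\tilde{\cal X},\zz/l)$. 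All the $\gamma_i$ lie strictly above the diagonal, so they lift canonically to classes $\tilde\gamma_i$ on $\tilde{\cal X}$, and the Milnor operations commute with the identification.

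For the inductive step, assume $\gamma_{i-1}\ne 0$ and, for contradiction, $\gamma_i=Q_i(\gamma_{i-1})=0$. Margolis vanishing supplies a class $\tilde\gamma'$ with $Q_i(\tilde\gamma')=\tilde\gamma_{i-1}$. Using $\deg Q_j=(2l^j-1,l^j-1)$ I track bidegrees: $\tilde\gamma_{i-1}$ sits in $\tilde H^{p_{i-1}+1,\,q_{i-1}}$ with
$$p_{i-1} = n + 2\tfrac{l^{i}-1}{l-1} - i, \qquad q_{i-1} = n-1 + \tfrac{l^{i}-1}{l-1} - i,$$
so that $\tilde\gamma'$ lies in bidegree $(p_\gamma,q_\gamma)$ satisfying
$$p_\gamma - q_\gamma - 1 = \tfrac{2l^i-1-l^{i+1}}{l-1}, \qquad q_\gamma - (n-1) = -i + \tfrac{2l^i-1-l^{i+1}}{l-1} + 1.$$
The elementary inequality $l^{i+1}\ge 2l^i-1$ (valid for $l\ge 2$) gives $p_\gamma\le q_\gamma+1$, and $(l^i-1)(l-2)\ge 0$ gives $q_\gamma\le n-1$, for every $0\le i\le n-1$ and every $l\ge 2$. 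Hence Lemma \ref{forref}, which is available under our standing assumption that Theorem \ref{BK0} holds in weights below $n$, forces $\tilde\gamma'=0$, whence $\tilde\gamma_{i-1}=0$, contradicting the inductive hypothesis.

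The substantive content is entirely contained in Lemma \ref{margolis} and Lemma \ref{forref}; the only thing that might look delicate is the uniform bidegree bookkeeping, but it reduces to the two elementary inequalities above. The one case that comes closest to failing is $i=0$, where both bounds are met with equality (the preimage would live in $\tilde H^{n,n-1}(\tilde{\cal X},\zz/l)$, the extreme corner of the vanishing range of Lemma \ref{forref}), and this is exactly what forces $Q_0(\delta)\ne 0$ as the first step of the induction.
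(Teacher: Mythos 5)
Your argument follows the paper's proof essentially step for step: pass to the unreduced suspension $\tilde{\cal X}$, induct on the number of Milnor operations applied, and combine Margolis vanishing from Lemma~\ref{margolis} (using a $\nu_{\le(n-1)}$-variety splitting $\uu{a}$, whose existence you should cite explicitly from Theorem~\ref{Rost2}) with the vanishing range of Lemma~\ref{forref}. However, your displayed formula for the preimage's bidegree is off by one: the correct value is
$$p_\gamma-q_\gamma-1 \;=\; 1+\tfrac{l^i-1}{l-1}-l^i \;=\; \tfrac{2l^i+l-2-l^{i+1}}{l-1},$$
not $\tfrac{2l^i-1-l^{i+1}}{l-1}$. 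Your own closing remark confirms this — for $i=0$ you correctly place the preimage in $\tilde H^{n,n-1}$, which gives $p_\gamma-q_\gamma-1=0$, whereas your displayed formula would give $-1$. In particular the inequality you cite, $l^{i+1}\ge 2l^i-1$, does not by itself yield $p_\gamma\le q_\gamma+1$; one needs the slightly stronger $l^{i+1}\ge 2l^i+l-2$, i.e.\ $(l-2)(l^i-1)\ge 0$, which still holds for all $l\ge 2$. Your second formula, for $q_\gamma-(n-1)$, is correct as written, and with the arithmetic repaired the proof goes through exactly as the paper's does.
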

\begin{proof}
The cofibration sequence which defines $\tilde{\cal X}$ gives us a
homomorphism $H^{p,q}({\cal X})\sr H^{p+1,q}(\tilde{\cal X})$ which is
a monomorphism for $p>q$. Let $\tilde{\delta}$ be the image of
$\delta$ in $H^{n+1,n-1}(\tilde{\cal X})$. Since $\delta\ne 0$ we have
$\tilde{\delta}\ne 0$. Let us show that
$$Q_i\dots Q_0(\tilde{\delta})\ne 0$$
for all $i<n$. Assume by induction that
$$Q_{i-1}\dots Q_0(\tilde{\delta})\ne 0$$
By Theorem \ref{Rost2} there exists a $\nu_{\le (n-1)}$-variety $X$ which
splits $\uu{a}$. By our construction we have $M(X)\in DM_{\cal
X}$. Therefore by Lemma \ref{margolis} the motivic Margolis homology
$\tilde{MH}^{*,*}_i$ of $\tilde{\cal X}$ are zero for all $i<n$. Hence
$Q_i\dots Q_0(\tilde{\delta})=0$ if and only if there exists $u$ such
that
\begin{eq}
\llabel{kercov}
Q_i(u)=Q_{i-1}\dots Q_0(\tilde{\delta})
\end{eq}
Let us make some degree computations which will also be useful
below. The composition $Q_{i-1}\dots Q_0$ shifts dimension by
$$1+2l-1+\dots + 2l^{i-1}-1=-i+2l(l^{i-1}-1)/(l-1)+2$$
and weight by
$$0+l-1+\dots+l^i-1=-i+l(l^{i-1}-1)/(l-1)+1$$
Therefore the kernel of $Q_i$ on $Q_{i-1}\dots
Q_0(\tilde{H}^{p,q}(-,-))$ is covered by the group of dimension
$$-i+2l(l^{i-1}-1)/(l-1)+2-2l^i+1=-i+2lw+3$$
and weight
$$-i+l(l^{i-1}-1)/(l-1)+1-l^i+1=-i+lw+2$$
where $w=(l^{i-1}-1)/(l-1)-l^i$. Note that $w\le -1$ and $lw\le
-2$. Therefore the bidegree of $u$ in (\ref{kercov}) is
$(n+1-i+2lw+3, n-1-i+lw+2)$. We conclude that the weight of $u$ is
$\le n-1$ and the difference between the dimension and the weight is
$$n+1-i+2lw+3-(n-1-i+lw+2)=3+lw\le 1$$
By Lemma \ref{forref} we conclude that $u=0$ which contradicts our
inductive assumption that $Q_{i-1}\dots Q_0(\tilde{\delta})\ne 0$.
\end{proof}
Define $\mu$ as in (\ref{defmu})
starting with $\delta$ and let $M_i$ be the motive defined
by (\ref{newline}). In view of Lemma \ref{step2} the results of
the previous section are applicable. In particular Proposition
\ref{universal} implies the following.
\begin{lemma}
\llabel{isuniversal}
Let $X$ be a $\nu_{n-1}$-variety which splits $\uu{a}$. Then 
$$M({\cal X})=M(\check{C}(X)).$$
\end{lemma}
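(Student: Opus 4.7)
The plan is to apply Proposition \ref{universal} directly, after reindexing. In the set-up of Section \ref{s4}, Proposition \ref{universal} takes as input an embedded simplicial scheme $\cal X$ together with a $\nu_m$-variety $X$ with $M(X)\in DM_{\cal X}$ and an element $\delta\in H^{m+1,m}({\cal X},\zz/l)$ satisfying $Q_0Q_1\cdots Q_m(\delta)\ne 0$, and produces an isomorphism $M({\cal X})\cong M(\check{C}(X))$. In the present situation $\uu{a}=(a_1,\dots,a_n)$ has length $n$, the splitting varieties we care about have dimension $l^{n-1}-1$, and the candidate class $\delta$ of Lemma \ref{comp1} lives in $H^{n,n-1}({\cal X},\zz/l)$, so the natural matching of indices is $m=n-1$.

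With this dictionary, the three hypotheses of Proposition \ref{universal} translate as follows. First, since $X$ is a $\nu_{n-1}$-variety which splits $\uu{a}$, it appears among the connected components of the $Y$ whose \v{C}ech nerve defines ${\cal X}$; because ${\cal X}$ was built from the class of all smooth schemes splitting $\uu{a}$, it is an embedded simplicial scheme (with respect to $\zz/l$-coefficients), and the existence of a morphism $X\to Y$ forces $M(X)\in DM_{\cal X}$. Second, the class $\delta\in H^{n,n-1}({\cal X},\zz/l)$ was produced in Lemma \ref{comp1} from the assumption that $\uu{a}$ is non-zero in $K_n^M(k)/l$ together with the inductive validity of Theorem \ref{BK0} in lower weights. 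Third, Lemma \ref{step2} gives exactly $Q_{n-1}\cdots Q_0(\delta)\ne 0$, which is the non-vanishing condition (\ref{nonzeroeq}) for $m=n-1$.

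Hence, writing $\mu=\tilde{Q}_0Q_1\cdots Q_{n-2}(\delta)$ as in (\ref{defmu}) and defining $M_{l-1}=S^{l-1}(M_\mu)$ as in (\ref{newline}), the entire machinery of Section \ref{s4} is applicable verbatim, with the role of $n$ there played by $n-1$ here. Proposition \ref{universal} then yields $M({\cal X})\cong M(\check{C}(X))$ with $\zz_{(l)}$-coefficients, which is the desired conclusion.

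There is really no substantial obstacle here: the proof is a bookkeeping argument that checks the hypotheses of Proposition \ref{universal} one by one. The only point requiring mild care is verifying that $M(X)$ actually belongs to $DM_{\cal X}$ (rather than just having a morphism to some motive in $DM_{\cal X}$), but this is automatic from the way $\cal X$ was built as $\check{C}(Y)$ with $Y$ containing $X$ as a connected component. All the genuinely non-trivial work, both the construction of $\delta$ and the non-triviality of its iterated $Q$-operations, has already been isolated in Lemmas \ref{comp1} and \ref{step2}.
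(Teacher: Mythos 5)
Your proof is correct and is exactly the paper's argument: the paper disposes of this lemma in the sentence immediately preceding it, by defining $\mu$ from $\delta$ via (\ref{defmu}), invoking Lemma \ref{step2} to verify condition (\ref{nonzeroeq}) with the index shift $n\mapsto n-1$, and then citing Proposition \ref{universal}. Your more explicit verification of the three hypotheses (membership of $M(X)$ in $DM_{\cal X}$, existence of $\delta$ from Lemma \ref{comp1}, and the $Q$-nonvanishing from Lemma \ref{step2}) is just an expansion of the same bookkeeping.
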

\begin{lemma}
\llabel{step3} Let $X$ be a $\nu_{n-1}$-variety which splits
$\uu{a}$. Then there is an exact sequence
$$H^{n+1,n}({\cal X},\zz_{(l)})\sr H^{n+1,n}_{et}(k,\zz_{(l)})\sr
H^{n+1,n}_{et}(k(X),\zz_{(l)})$$
\end{lemma}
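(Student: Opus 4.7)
My plan is to reduce the statement to a claim about $\check{C}(X)$ via Lemma \ref{isuniversal} and then to exploit the Beilinson-Lichtenbaum comparison controlled by the inductive Bloch-Kato hypothesis. Lemma \ref{isuniversal} gives $M({\cal X})\cong M(\check{C}(X))$, so $H^{n+1,n}({\cal X},\zz_{(l)})$ may be replaced by $H^{n+1,n}(\check{C}(X),\zz_{(l)})$. On the \'etale side, the cohomological descent argument of \cite[Lemma 7.3]{MCpub} gives $H^{*,*}_{et}(\check{C}(X),\zz_{(l)})\cong H^{*,*}_{et}(k,\zz_{(l)})$. Under these identifications, the first arrow of the displayed sequence becomes the Nisnevich-to-\'etale comparison map $\alpha$, and the second arrow is pullback along $Spec(k(X))\sr Spec(k)$.

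The inclusion image $\subseteq$ kernel is immediate. Pulling back $\check{C}(X)$ along $Spec(k(X))\sr Spec(k)$ yields $\check{C}(X_{k(X)})$, which is simplicially contractible because $X_{k(X)}$ carries the generic point of $X$ as a rational point. Hence the composition in the sequence factors through $H^{n+1,n}(k(X),\zz_{(l)})$, a group which vanishes by the cohomological dimension theorem because the motivic complex $\zz_{(l)}(n)$ has cohomology sheaves concentrated in degrees $\le n$.

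For the opposite inclusion, which is the main content, I would use the change-of-topology distinguished triangle
$$K\sr \zz_{(l)}(n)\sr R\alpha_*\alpha^*\zz_{(l)}(n)\sr K[1]$$
in weight $n$. Under the inductive Bloch-Kato hypothesis in weights $<n$, equivalently Beilinson-Lichtenbaum in those weights, the Nisnevich cohomology sheaves of the fiber $K$ are concentrated in degrees $>n$. Taking hypercohomology on $\check{C}(X)$ yields a long exact sequence in which the image of $H^{n+1,n}(\check{C}(X),\zz_{(l)})$ in $H^{n+1,n}_{et}(\check{C}(X),\zz_{(l)})=H^{n+1,n}_{et}(k,\zz_{(l)})$ is exactly the kernel of the connecting homomorphism to $H^{n+2,n}(\check{C}(X),K)$. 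One then has to analyze $H^{n+2,n}(\check{C}(X),K)$ via the \v{C}ech spectral sequence with $E_1^{p,q}=H^{q,n}(X^{p+1},K)$, using the cohomological dimension theorem, the Beilinson-Lichtenbaum comparison in weight $n-1$ applied to each product $X^{p+1}$, and a vanishing argument analogous to Lemma \ref{forref} for the unreduced suspension of $\check{C}(X)$, with the goal of showing that the obstruction class attached to any $c\in H^{n+1,n}_{et}(k,\zz_{(l)})$ dying over $k(X)$ is trivial.

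The main obstacle is precisely this last step: controlling the connecting map into $H^{n+2,n}(\check{C}(X),K)$ well enough to identify the lift-obstruction with the pullback to $H^{n+1,n}_{et}(k(X),\zz_{(l)})$. This is where one must combine Beilinson-Lichtenbaum in weight $n-1$ on the $X^{p+1}$ with the structural vanishing results of the preceding sections, and is where the bulk of the technical work would lie.
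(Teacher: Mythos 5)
Your reduction of the easy direction (image $\subseteq$ kernel) is fine, and your setup via the Beilinson--Lichtenbaum change-of-topology triangle is on the right track, but the gap you flag at the end is precisely where the paper's actual proof lives, and the mechanism that closes it is one you have not invoked: the generalized Rost motive $M_{l-1}$. The paper does not try to run the \v{C}ech spectral sequence for the infinite simplicial scheme $\check{C}(X)$ with coefficients in $K(n)$. Instead, working directly with ${\cal X}$ and with $L(n)=\tau^{\le n+1}(\zz^{et}_{(l)}(n))$ (the truncation, not the full pushforward you wrote, which is what keeps the cone $K(n)$ tractable), it factors the map $M(X)\sr M({\cal X})$ through the finite Tate motive $M_{l-1}$ as $M(X)\stackrel{\lambda}{\sr}M_{l-1}\sr M({\cal X})$ (Lemma \ref{lex}), uses that $\lambda$ is a split epimorphism (Lemma \ref{over}) to push the vanishing of $x'|_X$ up to $Hom_{DM}(M_{l-1},K(n)[n+1])$, and then climbs the finite Tate filtration of $M_{l-1}$ given by (\ref{seq1n}), where Lemma \ref{need7} kills all the pieces $Hom_{DM}(M({\cal X})(q)[2q],K(n)[n+1])$ with $q>0$ using \cite[Lemma 6.13]{MCpub} and the inductive BK hypothesis. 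This reduces the injectivity of $Hom(M({\cal X}),K(n)[n+1])\sr Hom(M(X),K(n)[n+1])$ to a finite computation on the Tate pieces of $M_{l-1}$.

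Your route, by contrast, asks for control of the full filtration on $H^{n+1}(\check{C}(X),K)$ coming from the skeletal filtration, and you would have to both justify convergence for the unbounded simplicial object and compute the $E_1^{p,q}=H^q(X^{p+1},K(n))$ for all $p\ge 1$. Even granting that $K(n)$ is concentrated in cohomological degree $n+1$ under the inductive hypothesis (which does make the relevant $E_1$ entries vanish), you would still need to manage convergence, whereas the paper's device of replacing the infinite \v{C}ech tower by the two-step finite Tate filtration of $M_{l-1}$ avoids the issue entirely, and in addition packages the ``split off from $M(X)$'' ingredient (the degree theorem of Section \ref{s3}) that makes the restriction to $X$ detect classes in the first place. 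So: the structure you set up is reasonable, but the missing idea is that the generalized Rost motive, not a direct \v{C}ech computation on $X^{p+1}$, is what replaces $\check{C}(X)$ in the injectivity step; without it the argument does not close.
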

\begin{proof}
The morphism $Spec(k(X))\sr Spec(k)$ admits a decomposition
$$Spec(k(X))\sr X\sr {\cal X}\sr Spec(k)$$
where the middle arrow is the natural morphism from $X$ to
$\cal X$. By \cite[Lemma 7.3]{MCpub} the last arrow defines an isomorphism on
$H^{n+1,n}_{et}(-,\zz_{(l)})$. Therefore it is sufficient to show that
the sequence
$$H^{n+1,n}({\cal X},\zz_{(l)})\sr H^{n+1,n}_{et}({\cal X},\zz_{(l)})\sr
H^{n+1,n}_{et}(k(X),\zz_{(l)})$$
is exact. The composition of two morphisms is zero because it factors
through 
$$H^{n+1,n}(k(X),\zz_{(l)})=0$$
Let $\zz_{(l)}^{et}(n)$ be the object in $DM^{eff}_{-}(k)$ which
represents the etale motivic cohomology of weight $n$ and let $L(n)$
be its canonical truncation at the level $n+1$ (see
\cite[p.90]{MCpub}). Consider a distinguished triangle of the form
$$\zz_{(l)}(n)\sr L(n)\sr K(n)\sr
\zz_{(l)}(n)[1]$$
where the first arrow corresponds to the natural morphism
$$\zz_{(l)}(n)\sr \zz_{(l)}^{et}(n).$$

Let $x$ be an element in 
$$H^{n+1,n}_{et}({\cal X},\zz_{(l)})={\bf H}^{n+1}({\cal X},L(n))$$
which goes to zero in 
$$H^{n+1,n}_{et}(k(X),\zz_{(l)})={\bf
H}^{n+1}(k(X),L(n)).$$
We have to show that the image $x'$ of $x$ in ${\bf H}^{n+1}({\cal
X},K(n))$ is zero.  By \cite[Lemma 6.13]{MCpub} $x'$
maps to zero in ${\bf H}^{n+1}(X,K(n))$. By Lemma \ref{lex} we know
that the
morphism from $M(X)$ to $M(\cal X)$ factors as
\begin{eq}\llabel{factoreq}
M(X_{\uu{a}})\stackrel{\lambda}{\sr} M_{l-1}\sr M({\cal X})
\end{eq}
where the first arrow is a split epimorphism by Lemma \ref{over}. By
\cite[Lemma 6.7]{MCpub}, $L(n)$ and $K(n)$ are complexes of sheaves
with transfers with homotopy invariant cohomology sheaves.  Therefore
$Hom_{DM}(M_{l-1},K(n)[n+1])$ is defined and (\ref{factoreq}) shows
that the image of $x'$ in $Hom_{DM}(M_{l-1},K(n)[n+1])$ is zero. We
conclude that $x'=0$ from (\ref{seq1n}) and the following lemma.
\begin{lemma}
\llabel{need7}
$Hom_{DM}(M_{l-2}(b)[2b],K(n)[n+1])=0$.
\end{lemma}
\begin{proof}
Using the distinguished triangles for $M_i$ it is sufficient to show
that
$$Hom_{DM}(M({\cal X})(q)[2q], K(n)[n+1])=0$$
for all $q>0$. This is an immediate corollary of \cite[Lemma
6.13]{MCpub} and our assumption that Theorem \ref{BK0} holds in weights
$< n$.
\end{proof}
\end{proof}
In view of Lemma \ref{step3} in order to finish the proof of Theorem
\ref{BK0} it remains to prove the following result.
\begin{proposition}
\llabel{realmain}
$H^{n+1,n}({\cal X},\zz_{(l)})=0$
\end{proposition}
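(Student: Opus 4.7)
The plan is to combine the generalized Rost motive $M_{l-1}$ from Section \ref{s4} with Rost's exactness statement (condition 2 of Theorem \ref{Rost2}) to force the vanishing. By Lemma \ref{isuniversal}, for any $\nu_{n-1}$-variety $X$ which splits $\uu{a}$ we have $M({\cal X})=M(\check{C}(X))$, and Lemma \ref{step2} verifies the nonvanishing condition needed to invoke the constructions of Section \ref{s4}. I would fix such an $X$ satisfying also the exactness condition of Theorem \ref{Rost2}, and let $M_{l-1}$ be the associated generalized Rost motive; it is a direct summand of $M(X)$ by Theorem \ref{mrestr}, with the self-duality of Corollary \ref{mdual}.

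First I would exploit the slice structure of $M_{l-1}$. Iterating the triangles (\ref{seq1n}) and (\ref{seq2n}) exhibits $M_{l-1}$ as a successive extension of Tate twists $\zz_{\cal X}(jb)[2jb]$ for $j=0,\dots,l-1$, where $b=(l^{n-1}-1)/(l-1)$, giving a spectral sequence
$$E_1^{j,\ast}=H^{\ast-2jb,\ast-jb}({\cal X},\zz_{(l)})\Rightarrow H^{\ast,\ast}(M_{l-1},\zz_{(l)}).$$
For the bidegree $(n+1,n)$ the slices with $j\geq 1$ land in motivic weight $n-jb\leq n-1$, where the inductive assumption (Bloch-Kato in weights $<n$, via the comparison of \cite[Cor.~6.9]{MCpub}) controls $H^{\ast,\ast}({\cal X},\zz_{(l)})$. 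A calculation analogous to Lemma \ref{need7} but at weight $n$ shows that each of these slice contributions vanishes in the relevant range of bidegrees; thus $H^{n+1,n}({\cal X},\zz_{(l)})$ reduces to the $j=0$ edge and injects into $H^{n+1,n}(M_{l-1},\zz_{(l)})$.

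The second step is to relate $H^{n+1,n}(M_{l-1},\zz_{(l)})$ to the motivic homology group controlled by Rost. Using Corollary \ref{mdual},
$$H^{n+1,n}(M_{l-1},\zz_{(l)})\cong Hom_{DM}(\zz,M_{l-1}(n-d)[n+1-2d]),\qquad d=l^{n-1}-1,$$
and this is a direct summand of $Hom_{DM}(\zz,M(X)(n-d)[n+1-2d])$. Applying the slice filtration on the $M_{l-1}$ side once more and invoking motivic Poincaré duality for the smooth projective $X$ of dimension $d$ identifies the top-slice piece with a summand of $H_{-1,-1}(X,\zz_{(l)})$, with the induced map to $k^\ast=H_{-1,-1}(\mathrm{Spec}(k),\zz_{(l)})$ matching the edge homomorphism of Theorem \ref{Rost2}(2). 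Rost's exactness, combined with the fact that the projector onto $M_{l-1}$ kills the difference $pr_1^\ast-pr_2^\ast$ coming from $X\times X$, then forces the relevant summand to vanish.

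The main obstacle is the indexing bookkeeping: matching bidegrees through the iterated symmetric-power construction of $M_{l-1}$ and identifying the concrete edge map of the slice spectral sequence with the geometric map $H_{-1,-1}(X)\to k^\ast$. One must verify in particular that the integral Bockstein construction of $\mu$ from $\delta$ in (\ref{defmu}) produces exactly the extension class whose associated Rost motive detects the norm-residue obstruction, rather than some spurious Tate contribution; and one must confirm that all lower-weight noise in the slice spectral sequence is genuinely killed by Lemma \ref{forref} together with \cite[Cor.~6.9]{MCpub} in the bidegrees that matter.
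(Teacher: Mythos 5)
Your proposal skips the step that makes the whole argument work, and as a result the duality computation lands in the wrong group. When you dualize $H^{n+1,n}(M_{l-1},\zz_{(l)})$ via Corollary \ref{mdual} you get
$$Hom_{DM}\bigl(\zz,\,M_{l-1}(n-d)[n+1-2d]\bigr),\qquad d=l^{n-1}-1,$$
and on $M(X)$ this corresponds by Poincar\'e duality to $H^{4d-n-1,\,2d-n}(X,\zz_{(l)})$. For $n\ge 2$ this is \emph{not} $H_{-1,-1}(X,\zz_{(l)})=Hom(\zz,M(X)(1)[1])=H^{2d+1,d+1}(X,\zz_{(l)})$, and no amount of slice filtration on $M_{l-1}$ will produce it: the top slice gives back $H^{n+1,n}({\cal X})$ (circular) and the other slices land in still lower weights. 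Since $H_{-1,-1}(X)\to k^*$ is precisely the map whose injectivity Theorem \ref{Rost2}(2) supplies, your reduction never reaches the place where Rost's exactness is usable. You correctly flag the ``indexing bookkeeping'' as the obstacle, but it is not bookkeeping --- it is the lack of a mechanism to cross the gap in bidegrees.

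The missing ingredient is Lemma \ref{step6}: the paper first \emph{amplifies} the bidegree by applying the Milnor operations $Q_{n-1}\cdots Q_1$, turning $H^{n+1,n}({\cal X},\zz_{(l)})$ monomorphically into $H^{2lb+2,lb+1}({\cal X},\zz_{(l)})$. This step depends on three facts your sketch does not invoke: the motivic cohomology of $\tilde{\cal X}$ is of exponent $l$ (because $X$ is $\nu_{\le 0}$, by \cite[Lemma 9.3]{MCpub}), so one may pass to $\zz/l$-coefficients and back; the Margolis homology of $\tilde{\cal X}$ vanishes in the relevant degrees (Lemma \ref{margolis}); and the kernel of each $Q_i$ is covered by groups that vanish by the weight estimates of Lemma \ref{forref}. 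Only after this amplification does the slice filtration of $M_{l-1}$ (Lemma \ref{ls1}) bring one to $H^{2d+1,d+1}(M_{l-1},\zz_{(l)})$, which dualizes exactly to $Hom(\zz,M_{l-1}(1)[1])$ (Lemma \ref{dualstep}) and then embeds into $H_{-1,-1}(X,\zz_{(l)})$ so that Theorem \ref{Rost2}(2) applies (Lemma \ref{step4}). Without inventing a substitute for that amplification, the proposal cannot be repaired.
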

The proof is given in Lemmas \ref{step6}-\ref{step4} below.
\begin{lemma}
\llabel{step6}
There is a monomorphism
\begin{eq}
\llabel{shouldbemono}
H^{n+1,n}({\cal X},\zz_{(l)})\sr H^{2lb+2,lb+1}({\cal
X},\zz_{(l)})
\end{eq}
\end{lemma}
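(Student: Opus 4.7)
The plan is to realize the map as a composition of cohomological operations from Sections 1--2 that are detected injectively on $\cal X$. Concretely, given an integral class $\eta\in H^{n+1,n}({\cal X},\zz_{(l)})$ I would reduce mod $l$, apply the sequence of Milnor operations used in the construction of $\mu$ in (\ref{defmu}), and then apply the operation $\phi_{l-1}$ (together with whatever extra Steenrod operation is needed to hit the precise target bidegree $(2lb+2,lb+1)$). The map is then the composition obtained in this way. Injectivity is established stage by stage.

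First, I would check that the reduction homomorphism
$$H^{n+1,n}({\cal X},\zz_{(l)})\sr H^{n+1,n}({\cal X},\zz/l)$$
is a monomorphism. By the Bockstein long exact sequence the kernel is the cokernel of multiplication by $l$ on $H^{n,n}({\cal X},\zz_{(l)})$, and the bidegree in which this obstruction sits is forced to vanish by Lemma \ref{forref} applied to the unreduced suspension $\tilde{\cal X}$ (together with the long exact sequence comparing the cohomology of ${\cal X}$ and $\tilde{\cal X}$). Thus it suffices to prove injectivity on the mod-$l$ reduction $\bar\eta$.

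Next, I would apply the operation $\tilde{Q}_0 Q_1\cdots Q_{n-1}$ to $\bar\eta$, obtaining an integral class $\mu_\eta\in H^{2b+1,b}({\cal X},\zz_{(l)})$, in direct parallel with (\ref{defmu}). To see that this does not kill any class coming from a nonzero $\eta$, I would run the same inductive argument as in Lemma \ref{step2}: the vanishing of the motivic Margolis homologies $\widetilde{MH}^{*,*}_i(\tilde{\cal X},\zz/l)$ for all $i<n$ (Lemma \ref{margolis}, which applies because $M(X)\in DM_{\cal X}$ for a $\nu_n$-variety $X$) reduces the kernel of each $Q_i$ to a class in a bidegree forbidden by Lemma \ref{forref}. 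Finally, I would apply $\phi_{l-1}$ (possibly composed with the operation needed to bump the weight by one to reach $lb+1$) and invoke Theorem \ref{maincomp}, Lemma \ref{scomp}, and Proposition \ref{nondiv1} to conclude that the resulting class is nonzero whenever $\mu_\eta\ne 0$.

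The main obstacle is coordinating injectivity across the stages: each $Q_i$ could a priori introduce a kernel, and the only leverage available is the combination of Lemma \ref{margolis} (to express such a kernel as the image of some $u$ under the next $Q$) with Lemma \ref{forref} (to force the bidegree of $u$ into a vanishing range). One must therefore track bidegrees carefully, exactly as in the degree bookkeeping in the proof of Lemma \ref{step2}, so that at every step the putative lift lies in a group of weight $\le n-1$ and with dimension at most weight $+\,1$, where Lemma \ref{forref} applies. A secondary point is checking that $\phi_{l-1}$ is non-vanishing on $\mu_\eta$; this is precisely the content of Proposition \ref{nondiv1} combined with the identification $Q_n(\mu)=\beta P^b(\mu)$ from Lemma \ref{scomp}, and therefore does not require any new input beyond what Section \ref{s4} has already established.
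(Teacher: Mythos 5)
Your plan overshoots the target bidegree, so the map you propose cannot be the one in (\ref{shouldbemono}). Starting from $\bar\eta\in H^{n+1,n}({\cal X},\zz/l)$, the composition $Q_{n-1}\cdots Q_1$ already lands exactly in bidegree $(2lb+2,lb+1)$ with $b=(l^{n-1}-1)/(l-1)$: the operations $Q_1,\dots,Q_{n-1}$ raise dimension by $\sum_{i=1}^{n-1}(2l^i-1)=2lb-(n-1)$ and weight by $\sum_{i=1}^{n-1}(l^i-1)=lb-(n-1)$, so $(n+1,n)\mapsto(2lb+2,lb+1)$. There is no room for an extra $\tilde Q_0$, let alone for $\phi_{l-1}$ afterwards --- applying $\phi_{l-1}$ would send you to bidegree $(2(lb+1)l+2,(lb+1)l)$, far past the target. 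The paper in fact realizes (\ref{shouldbemono}) by the bare composition $Q_{n-1}\cdots Q_1$ after mod-$l$ reduction; $\phi_{l-1}$ and Proposition \ref{nondiv1} play no role in Lemma \ref{step6} (their role is confined to proving purity of $M_{l-1}$ in Section \ref{s4}, which has already been used upstream to set the stage).

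There is a second gap in the first step. The kernel of the reduction $H^{n+1,n}({\cal X},\zz_{(l)})\sr H^{n+1,n}({\cal X},\zz/l)$ is $lH^{n+1,n}({\cal X},\zz_{(l)})$, not ``the cokernel of multiplication by $l$ on $H^{n,n}$,'' and Lemma \ref{forref} does not directly control it on ${\cal X}$. The paper instead first passes to $\tilde{\cal X}$ via the cofibration (a monomorphism in this range), and then observes that since a $\nu_{\le 0}$-variety splitting $\uu a$ has a point over an extension of degree prime to $l^2$, Lemma 9.3 of \cite{MCpub} forces the motivic cohomology of $\tilde{\cal X}$ to have exponent $l$; this is what makes the reduction to $\zz/l$-coefficients injective. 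The rest of your outline --- injectivity of each $Q_i$ via vanishing Margolis homology (Lemma \ref{margolis}) and the bidegree bookkeeping funneled through Lemma \ref{forref} --- is the right idea and matches the paper's argument, but without the passage to $\tilde{\cal X}$ you cannot even invoke Lemma \ref{margolis}, which is a statement about $\tilde{\cal X}$.
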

\begin{proof}
The cofibration sequence which defines $\tilde{\cal X}$ implies that
it is enough to show that there is a monomorphism
$$H^{n+2,n}(\tilde{\cal X},\zz_{(l)})\sr H^{2lb+3,lb+1}({\cal
X},\zz_{(l)})$$
Let $X$ be a $\nu_{\le (n-1)}$ variety which splits $\uu{a}$. Since
$X$ is a $\nu_{\le 0}$-variety it has a point over a finite field
extension of degree not divisible by $l^2$. Therefore, the motivic
cohomology of $\tilde{\cal X}$ are of exponent $l$ by
\cite[Lemma 9.3]{MCpub}. Therefore the projection from the motivic cohomology
with the $\zz_{(l)}$ coefficients to the motivic cohomology with the
$\zz/l$ coefficients is injective. Therefore it is sufficient to show
that there is a monomorphism
\begin{eq}
\llabel{zlmono}
H^{n+2,n}(\tilde{\cal X},\zz/l)\sr H^{2lb+3,lb+1}({\cal
X},\zz/l)
\end{eq}
which takes the images of the integral classes to the images of the
integral classes. Consider the composition of cohomological operations
\begin{eq}
\llabel{isinj} Q_{i}\dots Q_{1}:H^{n+2,n}(\tilde{\cal
X}_{\uu{a}},\zz/l)\sr
H^{2l(l^i-1)/(l-1)+n+2-i,l(l^i-1)/(l-1)+n-i}(\tilde{\cal
X}_{\uu{a}},\zz/l)
\end{eq}
For $i=n-1$ it is of the form (\ref{zlmono}) and we know by
\cite[Lemma 7.2]{MCpub} that $Q_i$ take the images of integral classes
to the images of integral classes. Let us show that it is a mono for
all $i\le n-1$.  By Lemma \ref{margolis} we know that the motivic
Margolis homology of $\tilde{\cal X}$ are zero. The computations made
in the proof of Lemma \ref{step2} show that the kernel of $Q_i$ on
$Q_{i-1}\dots Q_1(H^{n+2,n})$ is covered by the group of bidegree
$(p,q)$ where
$$p=4+2lw+n-i$$
$$q=2+lw+n-i$$
$$w=(l^{i-1}-1)/(l-1)-l^{i-1}.$$
We have $w\le -1$ and therefore $q\le n-i$ and $p\le q$. We conclude
that the covering group is zero by Lemma \ref{forref}.
\end{proof}
\begin{lemma}
\llabel{ls1}
There is an epimorphism
$$ker(H^{2b(l-1)+1, b(l-1)+1}(M_{l-1},\zz_{(l)})\sr H^{1,1}({\cal
X},\zz_{(l)}))\sr H^{2lb+2,lb+1}({\cal
X},\zz_{(l)})$$
\end{lemma}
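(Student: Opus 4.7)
Write $d = b(l-1)$, so that $lb + 1 = d + b + 1$ and $2lb + 2 = 2d + 2b + 2$. The natural approach is to exploit the two distinguished triangles (\ref{seq1n}) and (\ref{seq2n}) with $i = l-1$ describing $M_{l-1}$, and to read off the epimorphism from the long exact sequences they induce.

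First, apply $Hom_{DM}(-, \zz_{(l)}(d+1)[2d+1])$ to (\ref{seq2n})
$$\zz_{\cal X}(d)[2d] \xrightarrow{x^{l-1}} M_{l-1} \xrightarrow{u} M_{l-2} \xrightarrow{r} \zz_{\cal X}(d)[2d+1].$$
The contributions from the outer terms are $H^{1,1}({\cal X})$ and $H^{0,1}({\cal X})$ respectively, and $H^{0,1}({\cal X})=0$ for weight reasons (it vanishes termwise on the smooth simplicial scheme ${\cal X}$). The resulting long exact sequence therefore identifies
$$\ker\bigl(H^{2d+1, d+1}(M_{l-1}) \to H^{1,1}({\cal X})\bigr) \cong H^{2d+1, d+1}(M_{l-2})$$
via the pullback $u^*$.

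Next, apply $Hom_{DM}(-, \zz_{(l)}(lb+1)[2lb+2])$ to (\ref{seq1n})
$$M_{l-2}(b)[2b] \xrightarrow{v} M_{l-1} \xrightarrow{y^{l-1}} \zz_{\cal X} \xrightarrow{s} M_{l-2}(b)[2b+1].$$
Rewriting $\zz_{(l)}(lb+1)[2lb+2]$ as $\zz_{(l)}(d+b+1)[2d+2b+2]$ shows that the term dual to $M_{l-2}(b)[2b+1]$ is $H^{2d+1, d+1}(M_{l-2})$, so the relevant exact segment reads
$$H^{2d+1, d+1}(M_{l-2}) \xrightarrow{s^*} H^{2lb+2, lb+1}({\cal X}) \xrightarrow{(y^{l-1})^*} H^{2lb+2, lb+1}(M_{l-1}).$$
Composing the identification from the previous paragraph with $s^*$ gives a canonical homomorphism from the kernel in the statement to $H^{2lb+2, lb+1}({\cal X})$ whose image is exactly $\ker((y^{l-1})^*)$.

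What remains, and the step I expect to be the main obstacle, is to show that $(y^{l-1})^* = 0$. The plan is to combine two inputs: (a) apply $Hom(-, \zz_{(l)}(lb+1)[2lb+2])$ to the other triangle (\ref{seq2n}) for $M_{l-1}$ to obtain
$$H^{2b+1, b+1}({\cal X}) \to H^{2lb+2, lb+1}(M_{l-2}) \to H^{2lb+2, lb+1}(M_{l-1}) \to H^{2b+2, b+1}({\cal X}),$$
and (b) iterate the two-triangle analysis for $M_{l-2}, M_{l-3}, \dots$ in order to express the composed edge map in terms of multiplication by $\mu$ on cohomology of ${\cal X}$. The essential use of the hypotheses — existence of a $\nu_n$-variety in $DM_{\cal X}$ and $Q_0 Q_1 \cdots Q_n(\delta) \neq 0$ — enters because the connecting map $s$ is a twist of the class governing the extension by $\mu$, which ought to reduce the vanishing of $(y^{l-1})^*$ to low-weight vanishing statements on ${\cal X}$ of the type established in Lemma \ref{forref}.
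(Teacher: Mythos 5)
Your two long-exact-sequence computations are set up exactly as the paper's, and the reduction to proving $(y^{l-1})^*=0$ on $H^{2lb+2,lb+1}$ is the correct residual step. But you leave that step unproved, and it is the whole content of the lemma: without it you only have the image of $s^*$ equal to $\ker((y^{l-1})^*)$, not an epimorphism onto the full group.

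The missing ingredient is Lemma \ref{over} combined with the cohomological dimension theorem. By Lemma \ref{over}, $\lambda:M(X)\to M_{l-1}$ is a split epimorphism for the chosen $\nu_{n-1}$-variety $X$, so $M_{l-1}$ is a direct summand of $M(X)$ and $H^{*,*}(M_{l-1},\zz_{(l)})$ embeds into $H^{*,*}(X,\zz_{(l)})$. Since $\dim X = (l-1)b < lb+1$, the group $H^{2lb+2,lb+1}(X,\zz_{(l)})$ vanishes by the cohomological dimension theorem, hence $H^{2lb+2,lb+1}(M_{l-1},\zz_{(l)})=0$ and $(y^{l-1})^*=0$ trivially. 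Your sketched alternative — iterating the triangles (\ref{seq1n}), (\ref{seq2n}) down through $M_{l-2},\dots$ and invoking Lemma \ref{forref} — looks unlikely to succeed: the slices of $M_{l-1}$ contribute groups such as $H^{2b+2,b+1}({\cal X},\zz_{(l)})$ whose bidegrees $(p,q)$ satisfy $p=2q$ with $q=b+1$, which is outside the range $q\le n-1$, $p\le q+1$ that Lemma \ref{forref} controls, so the needed vanishings are not available by that route. The geometric input (a splitting $\nu_{n-1}$-variety realizing $M_{l-1}$ as a summand) is essential here, not an optional shortcut.
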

\begin{proof}
Let $X$ be a $\nu_{n-1}$-variety which splits $\uu{a}$. Consider the
sequences (\ref{seq1n}) and (\ref{seq2n}) for $i=l-1$.  By Lemma
\ref{over}, the motivic cohomology of $M_{l-1}$ embed into the motivic
cohomology of $X$ and in particular vanish where the motivic
cohomology of $X$ vanish.

From the first sequence and the fact that $lb+1>(l-1)b=dim(X)$ we
conclude that there is an epimorphism
\begin{eq}
\llabel{group2}
H^{2b(l-1)+1, b(l-1)+1}(M_{l-2},\zz_{(l)})\sr H^{2lb+2,lb+1}({\cal
X},\zz_{(l)})
\end{eq}
From the second sequence and the fact that $H^{0,1}({\cal
X},\zz_{(l)})=0$ we conclude that the left hand side of (\ref{group2})
is the kernel of the homomorphism
$$H^{2b(l-1)+1, b(l-1)+1}(M_{l-1},\zz_{(l)})\sr H^{1,1}({\cal
X},\zz_{(l)}).$$
\end{proof}
\begin{lemma}
\llabel{dualstep}
One has:
$$ker(H^{2b(l-1)+1, b(l-1)+1}(M_{l-1},\zz_{(l)})\sr H^{1,1}({\cal
X},\zz_{(l)}))=$$
$$=ker(Hom(\zz_{(l)}, M_{l-1}(1)[1])\sr Hom(\zz_{(l)},
\zz_{(l)}(1)[1]))$$
\end{lemma}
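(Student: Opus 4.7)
The plan is to apply the self-duality of $M_{l-1}$ from Corollary \ref{mdual} to identify the two kernels directly. Since $(M_{l-1},e'_M)$ is an internal Hom-object from $M_{l-1}$ to $\zz(d)[2d]$ in $DM^{eff}_-(k)$ with $d=b(l-1)$, the motive $M_{l-1}$ is strongly dualizable in $DM^{eff}_-(k)$ with $M_{l-1}^\vee\cong M_{l-1}(-d)[-2d]$. The standard rigid adjunction yields a natural isomorphism
$$\phi:\; H^{2d+1,d+1}(M_{l-1},\zz)=Hom(M_{l-1},\zz(d+1)[2d+1])\xrightarrow{\sim}Hom(\zz,M_{l-1}(1)[1])$$
identifying the two ambient groups of the lemma, so it suffices to match the kernels under $\phi$.

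I would trace the two maps through $\phi$. The left-hand map is precomposition with $x^{l-1}:\zz_{\cal X}(d)[2d]\sr M_{l-1}$, the first arrow of triangle (\ref{seq2n}); under $\phi$ it sends $\tilde f:\zz\sr M_{l-1}(1)[1]$ to the composite $e'_M\circ(x^{l-1}\oo\tilde f):\zz_{\cal X}(d)[2d]\sr\zz(d+1)[2d+1]$. The right-hand map is postcomposition with the structure morphism $\pi:M_{l-1}\sr\zz$, which factors as $M_{l-1}\xrightarrow{y^{l-1}}\zz_{\cal X}\sr\zz$. The essential claim is the pairing identity
$$e'_M\circ(x^{l-1}\oo id_{M_{l-1}})=s\oo\pi$$
as morphisms $\zz_{\cal X}(d)[2d]\oo M_{l-1}\sr\zz(d)[2d]$, where $s:\zz_{\cal X}(d)[2d]\sr\zz(d)[2d]$ is the twist of the structure morphism. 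Granted this, the left-hand map factors as the right-hand map followed by the injective homomorphism $Hom(\zz,\zz(1)[1])\hookrightarrow H^{1,1}({\cal X},\zz)$ induced by the structure ${\cal X}\sr Spec(k)$, so the two kernels coincide.

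The main obstacle is the pairing identity above. I would establish it by unpacking $e'_M$ as the $(l-1)$-st symmetric power of the weight-$b$ self-duality pairing $e_1:M\oo M\sr\zz_{\cal X}(b)[2b]$ for $M=M_1$ built in the proof of Lemma \ref{M1}, and computing on slices using the formulas (\ref{foru}) and (\ref{forv}) from the proof of Lemma \ref{mainseq}: at the slice level $e_1$ exchanges the top and bottom summands of $M$, so its $(l-1)$-st symmetric power pairs the extremal inclusion $x^{l-1}$ with the extremal quotient $y^{l-1}$ up to a unit, which is precisely the required identity. The remainder of the argument is then formal rigid-duality bookkeeping.
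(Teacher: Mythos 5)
Your proposal matches the paper's argument: both rely on the self-duality of $M_{l-1}$ from Corollary \ref{mdual} to identify the two ambient groups, and both reduce the matching of the two maps to the statement that the dual of $\tau_M:\zz(d)[2d]\sr M_{l-1}$ is $\pi_M:M_{l-1}\sr\zz$ --- your pairing identity $e'_M\circ(x^{l-1}\oo\mathrm{id})=s\oo\pi$ is precisely the content of that assertion, which the paper dispatches with ``one verifies easily.'' The only small divergence is that the paper first replaces the target $H^{1,1}({\cal X},\zz_{(l)})$ by $H^{1,1}(k,\zz_{(l)})$ via the Zariski--\'etale comparison in bidegree $(1,1)$ and the vanishing of the \'etale cohomology of the \v{C}ech object, whereas you only invoke injectivity of $H^{1,1}(k)\hookrightarrow H^{1,1}({\cal X})$; injectivity is all that the kernel equality requires and is more elementary to establish.
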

\begin{proof}
Since the motivic cohomology in the bidegree $(1,1)$ in the Zariski
and the etale topologies coincide and the etale motivic cohomology of
$\cal X$ coincide with the etale motivic cohomology of the point we
have
$$H^{1,1}({\cal
X},\zz_{(l)})=H^{1,1}(Spec(k),\zz_{(l)})$$
By duality estabilished in Corollary \ref{mdual} we have
$$H^{2b(l-1)+1, b(l-1)+1}(M_{l-1},\zz_{(l)})=Hom(\zz_{(l)},
M_{l-1}(1)[1])$$
and one verifies easily that the dual of the morphism
$$\tau_M:\zz(d)[2d]\sr M_{l-1}$$
is the morphism $\pi_M:M_{l-1}\sr \zz$. The statement of the lemma
follows.
\end{proof}
\begin{lemma}
\llabel{step4}
The
homomorphism
$$Hom(\zz_{(l)}, M_{l-1}(1)[1])\sr Hom(\zz_{(l)},
\zz_{(l)}(1)[1])$$
is a monomorphism.
\end{lemma}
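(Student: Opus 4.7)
The plan is to combine the direct-summand description of $M_{l-1}$ inside the motive of a $\nu_{\le(n-1)}$-variety with Rost's motivic-homology exact sequence. By Theorem \ref{Rost2} fix a $\nu_{\le(n-1)}$-variety $X$ splitting $\uu{a}$ for which
$$H_{-1,-1}(X\times X,\zz)\xrightarrow{(pr_1)_*-(pr_2)_*}H_{-1,-1}(X,\zz)\to k^*$$
is exact. Since $X$ splits $\uu{a}$ we have $M(X)\in DM_{\cal X}$, so Lemma \ref{lex} produces $\lambda:M(X)\to M_{l-1}$ fitting into the commutative diagram (\ref{lam}), and Lemma \ref{over} shows that $\lambda$ is a split epimorphism with a section $i$ satisfying $\lambda\circ i=c\cdot Id_{M_{l-1}}$ for some $c\in\zz_{(l)}^\times$. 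Combining this with diagram (\ref{lam}) gives the factorization $\pi=\pi_M\circ\lambda$ of the structure morphism $\pi:M(X)\to\zz$.

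Applying $Hom(\zz_{(l)},-(1)[1])$ yields the commutative square
$$
\begin{CD}
H_{-1,-1}(X,\zz_{(l)}) @>\lambda_*>> Hom(\zz_{(l)},M_{l-1}(1)[1])\\
@V\pi_*VV @VV(\pi_M)_*V\\
k^*\otimes\zz_{(l)} @= k^*\otimes\zz_{(l)}
\end{CD}
$$
in which $\lambda_*$ is split surjective (split by $(1/c)\,i_*$). Given any $\alpha\in\ker((\pi_M)_*)$, the lift $\gamma:=i_*(\alpha)$ satisfies $\pi_*(\gamma)=c\cdot(\pi_M)_*(\alpha)=0$, so by the Rost exact sequence above there is $\beta\in H_{-1,-1}(X\times X,\zz_{(l)})$ with $\gamma=((pr_1)_*-(pr_2)_*)(\beta)$. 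Since $c\alpha=\lambda_*(\gamma)$, the lemma becomes equivalent to the vanishing
$$\lambda_*\circ((pr_1)_*-(pr_2)_*)=0:H_{-1,-1}(X\times X,\zz_{(l)})\longrightarrow Hom(\zz_{(l)},M_{l-1}(1)[1]).$$

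Using the Kunneth isomorphism $M(X\times X)\cong M(X)\oo M(X)$ together with $pr_1=Id\oo\pi$, $pr_2=\pi\oo Id$, and $\pi=\pi_M\circ\lambda$, a direct computation rewrites $\lambda\circ(pr_1-pr_2)$ as $\delta\circ(\lambda\oo\lambda)$ where
$$\delta\;:=\;(Id_{M_{l-1}}\oo\pi_M)-(\pi_M\oo Id_{M_{l-1}}):M_{l-1}\oo M_{l-1}\longrightarrow M_{l-1}.$$
The hard part is thus to show that $\delta_*$ vanishes on $Hom(\zz_{(l)},-(1)[1])$. For $l>2$ the natural route is to exploit the swap-antisymmetry of $\delta$ together with $\tfrac12\in\zz_{(l)}$, which reduces the question to the antisymmetric part of $Hom(\zz_{(l)},M_{l-1}\oo M_{l-1}(1)[1])$; one then filters $M_{l-1}\oo M_{l-1}$ by its Tate graded pieces $\zz_{\cal X}((j+k)b)[2(j+k)b]$ obtained by iterating (\ref{seq1n}) and (\ref{seq2n}) in both factors, and expresses the resulting antisymmetric contributions as motivic (co)homology groups of $\cal X$ and $\tilde{\cal X}$ in bidegrees where vanishing is forced by the inductive hypothesis that Theorem \ref{BK0} holds in weights below $n$, by Lemma \ref{forref}, and by the etale vanishing of \cite[Lemma 7.3]{MCpub}. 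The case $l=2$ is treated separately by the methods of \cite{MCpub}.
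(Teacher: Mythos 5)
Your reduction is sound up to the last step: using Lemma \ref{over} to split $\lambda$, you correctly show that any $\alpha$ in the kernel of $(\pi_M)_*$ satisfies $c\alpha=\lambda_*\bigl((pr_1)_*-(pr_2)_*\bigr)(\beta)$ for some $\beta$, so the lemma is equivalent to the vanishing $\lambda_*\circ\bigl((pr_1)_*-(pr_2)_*\bigr)=0$. But this vanishing is where the proof actually lives, and your sketch of it does not hold up. The rewriting $\lambda\circ(pr_1-pr_2)=\delta\circ(\lambda\oo\lambda)$ with $\delta=(Id\oo\pi_M)-(\pi_M\oo Id)$ is correct, and the $\tfrac12$/swap-antisymmetry argument does kill the \emph{symmetric} part of $Hom(\zz,M_{l-1}\oo M_{l-1}(1)[1])$; but nothing you have said forces the image of $(\lambda\oo\lambda)_*\circ\bigl((pr_1)_*-(pr_2)_*\bigr)$ to lie in that symmetric part, and indeed $(pr_1)_*-(pr_2)_*$ is itself antisymmetric under the swap of $X\times X$, so the relevant classes are a priori on the wrong side. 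The appeal to a Tate filtration of $M_{l-1}\oo M_{l-1}$ and to ``bidegrees where vanishing is forced'' is a gesture at a computation that is never performed, and it is not clear that it can be made to work, since the desired vanishing is, as noted above, \emph{logically equivalent} to the lemma itself.

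The paper's proof avoids this circle by working with the Tate structure of $M_{l-1}$ over $\cal X$ directly rather than with its realization inside $M(X)$. Using the triangle (\ref{seq1n}) and the vanishing $Hom(\zz,M({\cal X})(bj)[2bj])=0$ for $j>0$, one shows that $S^{l-1}(y)_*\colon Hom(\zz_{(l)},M_{l-1}(1)[1])\to Hom(\zz_{(l)},M({\cal X})(1)[1])$ is already injective; then the descent spectral sequence for $\check{C}(X)$ identifies the target with $\mathrm{coker}\bigl(H_{-1,-1}(X^2,\zz)\xrightarrow{pr_1-pr_2}H_{-1,-1}(X,\zz)\bigr)$, and Theorem \ref{Rost2} says exactly that this cokernel injects into $k^*$. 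Note that once you have the injectivity of $S^{l-1}(y)_*$, your missing vanishing follows for free, since $\pi_{\cal X}=S^{l-1}(y)\circ\lambda$ kills the image of $pr_1-pr_2$ by the cokernel description; in other words, the one ingredient your argument is missing is precisely the one the paper supplies, and the detour through the splitting $i$ of $\lambda$ is not needed.
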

\begin{proof}
The distinguished triangle (\ref{seq1n}) together with the obviuous
fact that 
$$Hom(\zz, M({\cal X}(bj)[2bj]))=0$$
for $j>0$, implies that the homomorphism
$$Hom(\zz_{(l)}, M_{l-1}(1)[1])\sr Hom(\zz_{(l)},
M({\cal X})(1)[1])$$
is a monomorphism. It remains to see that
\begin{eq}
\llabel{lasteq}
Hom(\zz_{(l)},
M({\cal X})(1)[1])\sr Hom(\zz,\zz(1)[1])=k^*
\end{eq}
is a monomorphism. By Lemma \ref{isuniversal} we may assume that
${\cal X}=\check{C}(X)$ where $X$ is a smooth variety satisfying the
conditions of Theorem \ref{Rost2}. The spectral sequence which starts
from motivic homology of $X$ and converges to the motivic
homology of $\cal X$ shows that
$$Hom(\zz_{(l)},
M({\cal
X})(1)[1])=coker(H_{-1,-1}(X^2,\zz)\stackrel{pr_1-pr_2}{\sr}H_{-1,-1}(X,\zz))$$
We conclude that (\ref{lasteq}) is a mono by Theorem \ref{Rost2}.
\end{proof}
The deduction of the following two results from Theorem \ref{BK0} can be found in \cite{MCpub}. 
\begin{theorem}
\llabel{BK1} Let $k$ be a field of characteristic $\ne l$. Then the
norm residue homomorphisms
$$K_n^M(k)/l\sr H^n_{et}(k,\mu_l^{\oo n})$$
are isomorphisms for all $n$.
\end{theorem}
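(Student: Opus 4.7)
The plan is to deduce Theorem \ref{BK1} from Theorem \ref{BK0} in two standard reduction steps, using only naturality of the norm residue homomorphism in the field and its compatibility with restriction/corestriction for finite separable extensions.

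\textbf{Step 1: removing the hypothesis $\mu_l\subset k$ in characteristic zero.} Let $k$ be of characteristic zero and set $k'=k(\zeta_l)$. Then $d:=[k':k]$ divides $l-1$ and is therefore invertible modulo $l$. Both $K_*^M(-)/l$ and $H^*_{et}(-,\mu_l^{\oo *})$ carry restriction and corestriction functors for $k'/k$, satisfying $\mathrm{cor}\circ\mathrm{res}=d$, so restriction is split injective modulo $l$ on either side. A diagram chase using naturality of the norm residue and Theorem \ref{BK0} applied to $k'$ then gives that the norm residue for $k$ is both injective and surjective: injectivity is immediate from the split injectivity of restriction; surjectivity follows from the fact that $\mathrm{cor}\circ\mathrm{res}=d$ means every class in $H^n_{et}(k,\mu_l^{\oo n})$ is a $\mathrm{cor}$ of something in $H^n_{et}(k',\mu_l^{\oo n})$ (after inverting $d$, hence mod $l$), and the latter is a norm residue by Theorem \ref{BK0}.

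\textbf{Step 2: from characteristic zero to characteristic $p\ne l$.} For $k$ of characteristic $p\ne l$, first pass to the perfection (which is harmless mod $l$ since $l\ne p$) and then choose a Cohen ring $A$ with residue field $k$ and fraction field $K$ of characteristic zero (one may take $A=W(k)$ when $k$ is perfect). There are specialization maps
$$K_n^M(K)/l\sr K_n^M(k)/l,\qquad H^n_{et}(K,\mu_l^{\oo n})\sr H^n_{et}(k,\mu_l^{\oo n})$$
compatible with the norm residue via the intermediate object $K_n^M(A)/l$ and $H^n_{et}(A,\mu_l^{\oo n})$. The first specialization is surjective because every unit in $k$ lifts to a unit in $A$, and hence every symbol lifts; the second is an isomorphism by Gabber's rigidity theorem for henselian local rings with coefficients prime to the residue characteristic. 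Since Step 1 yields the result for $K$, a short diagram chase transfers it to $k$.

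\textbf{Main obstacle.} The diagrammatic parts of both steps are formal; the real content sits in the rigidity/specialization input in Step 2, which is precisely where the hypothesis $\mathrm{char}(k)\ne l$ is indispensable. One must also verify that the norm residue commutes with specialization at the level of symbols, i.e.\ that the Bloch--Kato maps for $A$, $K$, and $k$ fit into a commutative diagram; this is standard but not entirely trivial, and depends on the compatibility of the $A$-level symbol map with the reduction and generic fibre symbol maps. Once these compatibilities are in place, the deduction is essentially a diagram chase and matches the outline sketched in \cite{MCpub}.
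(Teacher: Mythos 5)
The paper itself does not give a proof of Theorem \ref{BK1}: it simply records that ``the deduction of the following two results from Theorem \ref{BK0} can be found in \cite{MCpub}'' and refers there. Your two-step reduction---a transfer argument to drop the hypothesis $\mu_l\subset k$ in characteristic zero, followed by specialization and rigidity to pass from characteristic zero to characteristic $p\ne l$---is precisely the standard argument that this citation points to, so your plan matches the intended proof rather than taking a different route.

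One small point of precision in Step~2: the specialization map $K_n^M(K)/l\sr K_n^M(k)/l$ is not induced directly from $A$; it is defined after a choice of uniformizer $\pi$ via the Bass--Tate residue $\alpha\mapsto\partial(\{\pi\}\cdot\alpha)$ (equivalently, via the splitting of the localization sequence), and it is this map that one checks commutes with the norm residue and with étale specialization. Surjectivity onto $K_n^M(k)/l$ does, as you say, come from lifting units to $A$ and is compatible with that construction. With this adjustment the diagram chase you sketch---lifting an element of $K_n^M(k)/l$ in the kernel, killing its image in $H^n_{et}(K,\mu_l^{\oo n})$ by Gabber rigidity, then invoking Theorem \ref{BK0} over $K$ (after Step~1)---gives injectivity, and the analogous chase gives surjectivity. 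Your remark that one may first pass to the perfection of $k$ is also correct and harmless mod $l$ for $l\ne p$, and it lets one use $W(k)$ directly as the Cohen ring.
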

\begin{theorem}
\llabel{BK2} Let $k$ be a field and $\cal X$ a pointed smooth simplicial
scheme over $k$. Then one has:
\begin{enumerate}
\item for any $n>0$ the homomorphisms
$$\tilde{H}^{p,q}({\cal X},\zz/n)\sr \tilde{H}^{p,q}_{et}({\cal X},\zz/n)$$
are isomorphisms for $p\le q$ and monomorphisms for $p=q+1$
\item the homomorphisms
$$\tilde{H}^{p,q}({\cal X},\zz)\sr \tilde{H}^{p,q}_{et}({\cal X},\zz)$$
are isomorphisms for $p\le q+1$ and monomorphisms for $p=q+2$
\end{enumerate}
\end{theorem}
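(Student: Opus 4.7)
The plan is to deduce Theorem \ref{BK2} from Theorem \ref{BK1} via the standard translation of the Bloch-Kato conjecture into the Beilinson-Lichtenbaum comparison, along the lines of \cite[Section 6]{MCpub}. A Chinese remainder decomposition in $n$ together with an induction on $s$ via the Bockstein triangles
$$\zz/l(q) \sr \zz/l^{s}(q) \sr \zz/l^{s-1}(q) \sr \zz/l(q)[1]$$
(and their etale analogues) reduces the first assertion to the case $n = l$ a prime. The integral assertion then follows from the finite one by applying the same Bockstein technique to $\zz \xrightarrow{l^s} \zz \sr \zz/l^s$ and passing to the limit in $s$; this shifts the iso/mono ranges upward by one, exactly as stated.

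For the core prime case, Theorem \ref{BK1} in all weights $\le q$ is equivalent by the standard formalism of \cite[Section 6]{MCpub} to the Beilinson-Lichtenbaum identification
$$\zz/l(q) \ii \tau^{\le q}\bigl( R\pi_*\mu_l^{\oo q}\bigr)$$
in the derived category of Nisnevich sheaves on $Sm/k$, where $\pi$ denotes the morphism from the etale to the Nisnevich site. The truncation on the right coincides with its target in cohomological degrees $\le q$ and injects into it in degree $q+1$, which immediately yields the desired iso/mono statement for the motivic cohomology of any smooth scheme.

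To pass from smooth schemes to a pointed smooth simplicial scheme $\cal X$, I would use the spectral sequence associated with the skeletal filtration of $\cal X$, analogous to the one deployed in Section \ref{comp2}. Its $E_1$-page involves only the reduced motivic cohomology of the (non-degenerate parts of the) smooth terms ${\cal X}_p$, it strictly preserves the weight index $q$, and it is functorial in the topology; thus the Nisnevich-to-etale comparison for $\cal X$ is induced by a morphism of spectral sequences whose $E_1$ comparison is iso/mono in the prescribed range by the smooth-scheme case just established. A standard convergence argument then transports the conclusion from $E_1$ to $E_\infty$, and hence to $\cal X$. The main conceptual obstacle is the Beilinson-Lichtenbaum translation itself, but that is already carried out in \cite{MCpub} and requires no new input beyond Theorem \ref{BK1}; everything else amounts to bookkeeping with spectral sequences and Bocksteins.
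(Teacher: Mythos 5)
The paper contains no proof of Theorem~\ref{BK2}: the only thing it says, immediately before stating Theorems~\ref{BK1} and~\ref{BK2}, is that ``the deduction of the following two results from Theorem~\ref{BK0} can be found in \cite{MCpub}.'' So there is no in-paper argument to compare against, and the real question is whether your reconstruction is sound.

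There is a genuine gap. Theorem~\ref{BK2} imposes no restriction on $\operatorname{char}(k)$, and $n>0$ is arbitrary. After your Chinese-remainder and Bockstein reductions you are left with the case $n=l$ a prime, and nothing in your argument prevents $l = \operatorname{char}(k)$. But Theorem~\ref{BK1} explicitly requires $\operatorname{char}(k)\ne l$, and the Beilinson--Lichtenbaum identification $\zz/l(q)\cong\tau^{\le q}R\pi_*\mu_l^{\oo q}$ that you invoke is not even well posed when $l=\operatorname{char}(k)$. The $p$-primary part of the statement in characteristic $p$ is a theorem of Geisser and Levine, resting on the identification of $\zz/p^r(q)$ with a shift of the logarithmic de~Rham--Witt sheaf $W_r\Omega^q_{\log}$, for which the Nisnevich-to-\'etale comparison is an entirely separate (and in fact stronger) fact, independent of the Bloch--Kato conjecture. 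Your proof must branch on whether the prime in question divides the characteristic; as written it silently assumes it does not.

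Two further points are underjustified, though they are probably repairable. First, ``applying the same Bockstein technique to $\zz\xrightarrow{l^s}\zz\sr\zz/l^s$ and passing to the limit'' does not by itself produce the one-step upward shift of the iso/mono range: that shift needs the rational comparison (\'etale descent for $\qq$-coefficients) in conjunction with boundedness of the complexes $\zz(q)$, not merely the finite-coefficient comparison and a $\lim$. Second, the skeletal-filtration spectral sequence for an arbitrary pointed smooth simplicial scheme raises a convergence question (the filtration is not finite, and for cohomology one must control a $\lim^1$). It is both cleaner and closer to the framework of \cite{MCpub} and \cite{oversub} to avoid the spectral sequence altogether: $\tilde H^{p,q}({\cal X},A)$ is $\mathrm{Hom}$ in the derived category from the reduced normalized chain complex $\zz(\tilde{\cal X})$, which sits in non-positive cohomological degrees, to $A(q)[p]$; since the cone of $A(q)\sr R\pi_*A(q)_{et}$ is concentrated in degrees $\ge q+1$ by Beilinson--Lichtenbaum, the asserted iso/mono range falls out immediately for any simplicial scheme with no convergence argument at all.
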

Let $X$ be a splitting variety for a symbol $\uu{a}$. Recall that $X$
is called a generic splitting variety if for any field $E$ over $k$
such that $\uu{a}=0$ in $K_n^M(E)/l$ there exists a zero cycle on $X$
of degree prime to $l$.
\begin{theorem}
\llabel{univ} Let $l$ be a prime and $k$ be a field of chracteristic
zero. Let further $\uu{a}=(a_1,\dots,a_n)$ be a sequence of invertible
elements of $k$ and $X$ be $\nu_{n-1}$-variety which splits
$\uu{a}$. Then $X$ is a generic splitting variety for $\uu{a}$. 
\end{theorem}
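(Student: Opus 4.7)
The plan is to show that for any field extension $E/k$ with $\uu{a}_E=0$ in $K_n^M(E)/l$, the motive $M(X_E)$ admits $\zz_E$ as a direct summand in $DM(E,\zz_{(l)})$; since $\mathrm{Hom}(\zz_E,M(X_E))=CH_0(X_E)\otimes\zz_{(l)}$ for smooth projective $X_E$, this is equivalent to the existence of a zero cycle on $X_E$ of degree prime to $l$. I may assume $\uu{a}\ne 0$ in $K_n^M(k)/l$, which is the essential case.

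First, $X$ being a $\nu_{n-1}$-variety splitting $\uu{a}$, together with the class $\delta\in H^{n,n-1}({\cal X},\zz/l)$ supplied by Lemma \ref{comp1} and the nonvanishing result of Lemma \ref{step2}, puts us in the setup of Section \ref{s4}; by Theorem \ref{mrestr} the Rost motive $M_{l-1}$ is a direct summand of $M(X)$ in $DM(k,\zz_{(l)})$, and base change makes $M_{l-1,E}$ a direct summand of $M(X_E)$. The decisive vanishing I need is $\mu_E=0$ in $H^{2b+1,b}({\cal X}_E,\zz_{(l)})$. Since $\mu=\tilde{Q}_0 Q_1\cdots Q_{n-1}(\delta)$, it suffices to show $\delta_E=0$. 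The description of $H^{n,n-1}({\cal X},\zz/l)$ in the proof of Lemma \ref{comp1}, using Bloch--Kato in weights below $n$ (available inductively) and the isomorphism $\mu_l\cong\zz/l$ coming from the root of unity, identifies $\delta$ with the class of $\uu{a}$ under the norm residue homomorphism of Theorem \ref{BK1}; since $\uu{a}_E=0$, we get $\delta_E=0$ and hence $\mu_E=0$.

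With $\mu_E=0$ the defining triangle (\ref{deftr}) for $M$ splits over ${\cal X}_E$, giving $M_E\cong\zz_{{\cal X}_E}\oplus\zz_{{\cal X}_E}(b)[2b]$ in $DM_{{\cal X}_E}(\zz_{(l)})$. Since $(l-1)!$ is invertible in $\zz_{(l)}$, the $(l-1)$-st symmetric power decomposes as $M_{l-1,E}\cong\bigoplus_{j=0}^{l-1}\zz_{{\cal X}_E}(jb)[2jb]$. Forgetting to $DM(E,\zz_{(l)})$ and applying the base change of Proposition \ref{universal} (which identifies $M({\cal X}_E)$ with $M(\check{C}(X_E))$) produces
$$M_{l-1,E}\;\cong\;\bigoplus_{j=0}^{l-1}M(\check{C}(X_E))(jb)[2jb],$$
exhibiting $M(\check{C}(X_E))$, the $j=0$ summand, as a direct summand of $M(X_E)$.

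The main obstacle is the final step: upgrading this to a $\zz_E$-summand of $M(X_E)$, or equivalently, identifying $M(\check{C}(X_E))$ with $\zz_E$ in $DM(E,\zz_{(l)})$. My strategy is to combine the displayed decomposition with the self-duality of $M_{l-1}$ from Corollary \ref{mdual}: the perfect pairing $e_M':M_{l-1}\otimes M_{l-1}\to\zz(d)[2d]$ base-changes to a non-degenerate pairing on $M_{l-1,E}$ that matches the $j$-th and $(l-1-j)$-th summands, forcing $M(\check{C}(X_E))$ to be self-dual in $DM^{eff}_{-}(E,\zz_{(l)})$. Combined with Theorem \ref{BK2} (which, via the etale contractibility of $\check{C}(X_E)$, forces vanishing of $\tilde{H}^{p,q}(\tilde{\check{C}}(X_E),\zz_{(l)})$ for $p\le q+1$) and Proposition \ref{nondiv1} (which pins down the $j=l-1$ summand via the relative fundamental class $\tau_{{\cal X}_E}$, nonzero modulo $l$), the self-duality should rigidify $M(\check{C}(X_E))$ to $\zz_E$; the resulting inclusion $\zz_E\hookrightarrow M(X_E)$ then produces the desired zero cycle of degree prime to $l$ on $X_E$.
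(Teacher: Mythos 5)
Your approach is genuinely different from the paper's, which is a one-line reduction. The paper reads Theorem~\ref{univ} off of Lemma~\ref{isuniversal} as follows: $M({\cal X})\cong M(\check{C}(X))$ is, by \refwheneq, equivalent to saying that for every smooth $Y$ with $M(Y)\in DM_{\cal X}$ there is a morphism $M(Y)\sr M(X)$ over $\zz_{(l)}$. Given $E/k$ with $\uu{a}_E=0$, one picks a finitely generated subfield $E'\subset E$ with $\uu{a}_{E'}=0$ and a smooth $k$-model $Y$ with $k(Y)=E'$; the resulting morphism $M(Y)\sr M(X)$, restricted to the generic point of $Y$ and base-changed to $E$, produces a class in $Hom_{DM(E)}(\zz_E,M(X_E))=CH_0(X_E)\oo\zz_{(l)}$ of degree $1$, hence a zero cycle of degree prime to $l$. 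Your route --- base changing the whole Rost-motive package to $E$, showing $\mu_E=0$, splitting $M_{l-1,E}\cong\bigoplus_{j=0}^{l-1}M({\cal X}_E)(jb)[2jb]$ as a summand of $M(X_E)$, and then trying to identify $M(\check{C}(X_E))$ with $\zz_E$ --- is far more machinery than the task requires, though the steps through the decomposition of $M_{l-1,E}$ are individually sound.

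The genuine gap is the step you yourself flag as ``the main obstacle.'' The argument you sketch --- that the base-changed pairing $e'_M$ of Corollary~\ref{mdual} ``matches the $j$-th and $(l-1-j)$-th summands'' and thereby forces $M(\check{C}(X_E))$ to be self-dual, and that self-duality together with Theorem~\ref{BK2} and Proposition~\ref{nondiv1} ``should rigidify'' $M(\check{C}(X_E))$ to $\zz_E$ --- is not a proof: self-duality of a motive does not pin it down to $\zz$, and you give no mechanism by which the cited vanishing and nondivisibility statements close the gap. What you miss is that this step is actually immediate once one uses how $\cal X$ was defined. Since $\uu{a}_E=0$, a smooth $k$-model $X'$ of a finitely generated subfield of $E$ over which $\uu{a}$ vanishes belongs to the class of schemes defining $\cal X$, so $Y_E$ (the base change of the disjoint union $Y$) acquires an $E$-rational point, the \v{C}ech object ${\cal X}_E=\check{C}(Y_E)$ becomes simplicially contractible, and $M({\cal X}_E)=\zz_E$ outright. (This also trivializes your intermediate step: with $M({\cal X}_E)=\zz_E$ one has $\mu_E\in H^{2b+1,b}(\mathrm{Spec}\,E,\zz_{(l)})=0$ for elementary weight reasons, with no need to invoke Theorem~\ref{BK1}.) With that observation substituted for your ``rigidify'' paragraph, your proof becomes correct, but the machinery of Section~\ref{s4} you are re-deploying over $E$ is doing work that the paper avoids entirely by unwinding \refwheneq\ over the base field.
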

\begin{proof}
It is a reformulation of Lemma \ref{isuniversal}.
\end{proof}

\def\cprime{$'$}

\comment{
\bibliography{alggeom}
\bibliographystyle{plain}
}

\end{document}